\documentclass[12pt]{article}
\usepackage{amsmath,amssymb,amsthm,fullpage}

\newtheorem{thm}{Theorem}[section]
\newtheorem{lem}[thm]{Lemma}
\title{Exact local spectral thresholds for perfect matchings\\ in $3$-graphs and $3$-partite $3$-graphs}
\author{Pei Liu\thanks{Department of Mathematics, Sungkyunkwan University, Suwon, 16419, Republic of Korea. liupei2023@g.skku.edu. Research supported by the China Scholarship Council.}
	\ \ and\ \ Suil O\thanks{Department of Applied Mathematics and Statistics, The State University of New York, Korea, Incheon, 21985, suil.o@sunykorea.ac.kr. Corresponding author. Research supported by the National Research Foundation of Korea (NRF) grant funded by the Korea government(MSIT) No. RS-2025-23523950.}}
\date{}

\begin{document}
	\maketitle
	
	\begin{abstract}
		For a $3$-uniform hypergraph $H$, let $\sigma(H)$ be the minimum over the vertices
		of $H$ of the spectral radius of the link. Lin, Lu, Yuan and Zhao conjectured that
		$\sigma(H)>\tfrac{2n}3-2$ forces a perfect matching in a $3$-graph of order $n$
		divisible by three, and Lu and Yuan conjectured that $\sigma(H)>\tau(q)$ forces one
		in a $q$-balanced $3$-partite $3$-graph, where $\tau(q)=\sqrt{q(q-1)/2}$ for odd $q$
		and $\tau(q)$ is given by a quartic for even $q$. In this paper, we prove both perfect matching conjectures for large order. Each proof passes through a stability theorem for
		fractional matchings: a fractional vertex cover of deficient weight with a
		zero coordinate is obtained, the link of that vertex carries the induced cover, and the
		spectral radius of such a link is bounded by an inequality whose extremal cases identify
		the corresponding space barriers.
	\end{abstract}
	
	\noindent\emph{Keywords}: perfect matching; fractional matching; 3-graph; 3-partite 3-graph; spectral radius; link graph.
	
	\medskip
	\noindent\emph{AMS 2020 subject classification}: 05C50, 05C65, 05C70, 05D05.
	
	\section{Introduction}\label{sec:intro}
	
	A \emph{$3$-uniform hypergraph}, or \emph{$3$-graph}, is a pair $H=(V(H),E(H))$
	with $E(H)\subseteq\binom{V(H)}3$. A \emph{matching} is a set of pairwise disjoint
	edges, and it is \emph{perfect} if its edges cover $V(H)$. A $3$-graph is
	\emph{$q$-balanced $3$-partite} if $V(H)$ is partitioned into classes
	$V_1,V_2,V_3$ of size $q$ meeting every edge once each; a perfect matching then has
	$q$ edges. For $v\in V(H)$ the \emph{link} $L_H(v)$ is the graph on
	$V(H)\setminus\{v\}$ with edge set $\{e\setminus\{v\}:v\in e\in E(H)\}$, which is
	bipartite with parts $V_j$ and $V_k$ when $H$ is $3$-partite and $v\in V_i$. Write
	$\rho$ for the adjacency spectral radius and
	\[
	\sigma(H)=\min_{v\in V(H)}\rho\bigl(L_H(v)\bigr).
	\]
	
	Local spectral conditions provide a natural counterpart to
	vertex-degree conditions for perfect matchings.
	The vertex degree $d_H(v)$ equals the number of edges in
	the link $L_H(v)$, while its spectral radius depends on
	both the number and the arrangement of these edges.
	This motivates the determination of exact thresholds
	on $\sigma(H)$ that guarantee a perfect matching. Lin, Lu, Yuan and Zhao~\cite{LLYZ} proved that
	$\sigma(H)>(\tfrac23+\gamma)n$ forces a perfect matching when $3\mid n$ and $n$ is
	large, and conjectured a sharp condition whose perfect matching endpoint reads
	$\sigma(H)>\tfrac{2n}3-2$; they also proved a fractional matching version of their
	conjecture~\cite[Theorem~1.4]{LLYZ}. Lu and Yuan~\cite{LY} proved the partite analogue with
	$(\tfrac{\sqrt2}2+\gamma)q$ and conjectured the exact threshold
	\begin{equation}\label{eq:tau}
		\tau(q)=
		\begin{cases}
			\sqrt{\dfrac{q(q-1)}2}, & q\ \text{odd},\\[2ex]
			\dfrac12\sqrt{q^2-q+2+\sqrt{q^4-6q^3+9q^2+12q-12}}, & q\ \text{even}.
		\end{cases}
	\end{equation}
	In this paper, we prove both perfect matching conjectures for large order.
	
	\begin{thm}\label{thm:nonpartite}
		There is $n_0$ such that every $3$-graph $H$ on $n\ge n_0$ vertices with $3\mid n$
		and $\sigma(H)>\tfrac{2n}3-2$ has a perfect matching.
	\end{thm}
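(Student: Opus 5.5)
We follow the standard absorbing/stability route, organized around the two extremal configurations for perfect matchings in $3$-graphs: the \emph{space barrier} (a set $S$ of fewer than $n/3$ vertices meeting every edge) and the \emph{divisibility barrier} (a parity obstruction between two parts). First we record two basic consequences of $\sigma(H)>\tfrac{2n}3-2$. Since $\rho(G)\le\sqrt{2e(G)}$ and $\rho(G)\le\Delta(G)\le n-2$, each link has many edges. More precisely, $d_H(v)\ge\rho(L_H(v))^2/2\ge(\tfrac29-o(1))n^2$, which is below the degree threshold $\approx\tfrac59\binom n2$. So degree arguments alone are not enough, and the spectral structure of the link must be used. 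The key tool is a spectral--structural inequality. If a graph $G$ on $n-1$ vertices has all its edges meeting a vertex set $W$, then
\[
\rho(G)\le \frac{|W|-1+\sqrt{(|W|-1)^2+4|W|(n-1-|W|)}}{2},
\]
which is the spectral radius of $K_{|W|}\vee \overline{K_{n-1-|W|}}$. Plugging in $|W|=n/3-1$ gives exactly $\tfrac{2n}3-2$, and this identifies the sharp space barrier.

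\textbf{Step 1 (fractional stability).} Suppose $H$ has no perfect fractional matching. By LP duality there is a fractional vertex cover $w$ with total weight $<n/3$. Following the abstract, we normalize $w$ to have a zero coordinate at some vertex $v$. Then every edge $vxy$ satisfies $w(x)+w(y)\ge1$, so the link $L_H(v)$ is covered by the induced cover. Using the eigenvector of $L_H(v)$, we bound $\rho(L_H(v))$ by a quadratic function of $\sum w$. This yields $\rho\le\tfrac{2n}3-2$, a contradiction. The inequality should also be stable: if $\rho(L_H(v))>(\tfrac23-\varepsilon)n$ for all $v$ and $H$ has no almost perfect matching, then $H$ is $\varepsilon'$-close to the space barrier. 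The same argument applied to the $(1-\gamma)$-version of the LP gives an almost perfect matching in the non-extremal case.

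\textbf{Step 2 (non-extremal case).} When $H$ is not close to the space barrier, we build an absorbing set. Since every link has spectral radius at least $(\tfrac23-o(1))n$, every link contains a large dense piece. From this we derive that every pair of vertices has $\Omega(n^3)$ absorbing $6$-sets, using the degree bound from Step 1 together with a supersaturation count on links. Reserving a random absorber, covering almost all of the rest via Step 1, and absorbing the leftover then gives a perfect matching. We must also rule out closeness to the divisibility barrier. In that configuration each link is close to a union of two cliques of sizes about $n/2$, so $\rho\approx n/2<\tfrac{2n}3-2$. Hence the divisibility barrier is excluded automatically, and this case is comparatively routine.

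\textbf{Step 3 (extremal case; main obstacle).} Suppose $H$ is $\varepsilon$-close to the space barrier: there is a set $S$ with $|S|\approx n/3$ such that almost no edge misses $S$. We fix a partition $V=A\cup B$ with $|A|\approx n/3$. The sharp condition is needed here to handle the exact threshold. Consider a vertex $v\in B$ whose link essentially lives on edges meeting $A$. The inequality above, applied with $W=A$, forces $|A|\ge n/3$. If $|A|=n/3$ exactly, it forces at least one edge inside $B$ (or a link edge not touching $A$), since otherwise $\rho(L_H(v))\le\tfrac{2n}3-2$. We then move atypical vertices between the parts and greedily match the few bad vertices using high link density. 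Next we use edges inside $B$ to balance so that $|B'|=2|A'|$. Finally we complete via a Hall-type argument on the nearly complete $A$--$B$--$B$ structure. We expect the hardest point to be the stability version of the spectral inequality with exact constants: we need $\rho(G)>\tfrac{2n}3-2$ to force a cover set of size $\ge n/3$ plus an extra edge. We would attack this by Perron-vector averaging over $W$ versus its complement, reducing to a $2\times2$ quotient matrix.
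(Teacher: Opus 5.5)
Your plan has a genuine gap: it misses one of the two extremal configurations. At this threshold the two obstructions are two space barriers, not a space barrier and a divisibility barrier. You are right that the divisibility barrier is excluded spectrally.

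\textbf{The missing barrier.} Besides $T_1(A)$ with $|A|=\tfrac n3-1$, there is $T_2(A)$ with $|A|=\tfrac{2n}3-1$, the set of all triples with at least two vertices in $A$. It has no perfect matching. The link of any $b\notin A$ is a clique on $A$ plus isolated vertices, so it has spectral radius exactly $\tfrac{2n}3-2$. In LP terms, $T_2$ comes from the half-integral cover with weight $\tfrac12$ on $A$. The link of a zero-weight vertex is then a clique on about $\tfrac{2n}3$ vertices, not a graph whose edges all meet a set of size about $\tfrac n3$. Your key inequality only treats integral covers of the link, so it cannot see this case, and the stability conclusion of Step 1 (``close to the space barrier'') is false as stated. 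The correct dichotomy is Lemma~\ref{lem:dichot}. It rounds the cover to values in $\{0,\tfrac12,1\}$ and places the link inside $K_a\vee(K_b\cup I_c)$ with $2a+b<\tfrac{2n}3$. Via the $3\times3$ quotient matrix, it then shows the spectral radius is near $\tfrac{2n}3$ only if $a\approx\tfrac n3$ (giving $T_1$) or $a\approx0$ and $b\approx\tfrac{2n}3$ (giving $T_2$).

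\textbf{Why this breaks Steps 2 and 3.} Hypergraphs near $T_2$ satisfy the hypothesis: for instance $T_2(A)$ with $|A|=\tfrac{2n}3$ has $\sigma=\tfrac{2n}3-1$. They are far from every $T_1$ configuration, so your case split sends them to Step 2, but robustness fails there. An absorber $D$ for a triple inside $V\setminus A$ needs $|D\cap A|\ge\tfrac23|D|+2$. So your reserved set carries a linear surplus of $A$-vertices. Weight $\tfrac12$ on the rest of $A$ is then a fractional cover of deficient weight for the remainder. Every matching there misses a number of vertices linear in that surplus, far more than absorption can handle. Hence $T_2$ needs its own exact extremal argument, which is the paper's Lemma~\ref{lem:ext2}. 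After repairing the partition, it uses $d_H(b)>\binom{2m-1}2$ (from $\rho^2+\rho\le2e$) to find a matching of $ABB$ and $BBB$ edges, of weights $1$ and $2$, that corrects the imbalance. It then adds $AAA$ edges to lower the weight and finishes with Lemma~\ref{lem:box}. Your Step 3 treats only $|A|\approx\tfrac n3$.

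\textbf{Two further steps are asserted rather than proved.}
\begin{enumerate}
\item[(i)] The spectral condition does not give two vertices a large common link. So the claimed abundance of small absorbers needs a connectivity argument like that of Lemma~\ref{lem:absnp}: an auxiliary graph of large common links with independence number at most two and linear minimum degree, short paths in it, and two consecutive rich edge types across a two-part split.
\item[(ii)] In the $T_1$ case, the single edge inside $B$ that the threshold forces at one vertex must be upgraded to $k$ disjoint edges avoiding $A$ when $|A|=\tfrac n3-k$. This is Lemma~\ref{lem:matchdel}.
\end{enumerate}
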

	
	\begin{thm}\label{thm:partite}
		There is $q_0$ such that every $q$-balanced $3$-partite $3$-graph $H$ with
		$q\ge q_0$ and $\sigma(H)>\tau(q)$ has a perfect matching.
	\end{thm}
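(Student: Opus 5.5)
The statement to be proved is Theorem~\ref{thm:partite}. I would run the standard absorbing-plus-stability scheme, organised so that the only sharp input is a statement about fractional matchings. Fix small constants $0<\varepsilon\ll\gamma\ll1$ and split into two regimes.

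\emph{Extremal regime.} Here $H$ is $\varepsilon$-close to a space barrier: a set $S\subseteq V_1$ (after relabelling classes) of size about $q/2$ and sets in the other classes such that almost no edge meets the complement in two vertices. In this regime we argue directly: clean up atypical vertices greedily using the link spectral condition, then finish with a Hall-type or counting argument. The parity of $q$ enters exactly here, and this is where the quartic in (\ref{eq:tau}) is matched: the extremal link is a bipartite graph made of a complete bipartite block $K_{a,q}$ together with a block of size $\lfloor q/2\rfloor$, whose spectral radius is the root of a quadratic in $\rho^2$.

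\emph{Non-extremal regime.} Otherwise, first build an absorbing matching $M$ with $|M|\le\gamma q$. Since $\sigma(H)>\tau(q)\approx q/\sqrt2$ and $\rho(L)\le\sqrt{e(L)}$, every vertex has degree at least about $q^2/2$. This exceeds the threshold for the usual partite absorbing lemma, which is $(\tfrac12+o(1))q^2$-type absorption for partite triples. Next, in $H'=H-V(M)$, find an almost perfect matching via the weak regularity lemma and a perfect fractional matching in the reduced $3$-partite $3$-graph.

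The sharp input, and the main obstacle, is the \emph{fractional stability} statement. If a balanced $3$-partite $3$-graph $H$ with $\sigma(H)\ge\tau(q)-\varepsilon q$ has no fractional matching of size $(1-\varepsilon)q$, then $H$ is close to the space barrier. By LP duality there is a fractional vertex cover $w$ of weight less than $(1-\varepsilon)q$. I would first show, by a shifting argument, that one can assume some coordinate $w(v)=0$. Then every edge $xy$ of $L_H(v)$ satisfies $w(x)+w(y)\ge1$, so $L_H(v)$ is a bipartite graph on $V_j\cup V_k$ carrying an induced fractional cover of total weight less than $q$.

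The key lemma I would aim to prove is this: a bipartite graph with parts of size $q$ admitting a fractional vertex cover of weight $t$ has spectral radius at most roughly $\sqrt{t(q-t/2)}$-type bound. The extremal cases are threshold graphs in which the cover takes values in $\{0,\tfrac12,1\}$. To get it, I would use the Perron vector together with the fact that, after rounding, the cover can be taken half-integral. This reduces the problem to comparing $\rho$ of a graph with a vertex cover of $a$ full vertices plus a half-weight part, and optimising gives $\le\tau(q)$ with equality only at the barrier. Equality analysis then yields stability.

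Transferring this through the regularity reduced graph, where links become weighted, needs care: I would instead apply stability to $H'$ directly, since $\sigma$ drops by at most $\gamma q$ on deleting $V(M)$. Finally, absorb the leftover vertices with $M$ to obtain the perfect matching.
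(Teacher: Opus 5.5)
\paragraph{The key lemma is false as stated.} The problem is not the constants: a bound in terms of the total cover weight $t$ alone can never beat $\sqrt{tq}$. Indeed, $K_{t,q}$ carries the cover equal to $1$ on its $t$-side, yet $\rho(K_{t,q})=\sqrt{tq}>\sqrt{t(q-t/2)}$.

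The barrier itself defeats the intended use. For odd $q$ with $|C_1|=|C_2|=\tfrac{q-1}2$ and $C_3=\emptyset$, the cover $\mathbf1_{C_1\cup C_2}$ has weight $q-1$ and vanishes on $V_3$. But every vertex of $V_3$ has link $J_q(\tfrac{q-1}2,\tfrac{q-1}2)$, with $\rho\approx\tfrac{1+\sqrt5}4q\approx0.81q$. So it matters \emph{which} zero-weight vertex you pass to, and the missing idea is to use the class sums:
\begin{itemize}
\item Since $\nu^*(H)<q$, each class $V_i$ contains a vertex $u_i$ not saturated by a maximum fractional matching. Complementary slackness then gives $g(u_i)=0$ for every minimum cover $g$.
\item Choose $u_1$ in the class of largest $g$-weight. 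The cover induced on $L_H(u_1)$ then satisfies $g(X)\ge g(Y)$ and $2g(X)+g(Y)<q$.
\item Only under this asymmetric constraint does $\rho\le q/\sqrt2$ hold (Lemma~\ref{lem:bipineq}).
\end{itemize}
Half-integral rounding is also the wrong reduction. One extremal configuration is $X\times S$ with $|S|=q/2$ and cover $\tfrac13$ on $X$, $\tfrac23$ on $S$. Rounding it to halves violates $2g(X)+g(Y)\le q$, so it must be excluded separately, using the links of $u_2$ and $u_3$ as in Theorem~\ref{thm:target}. Note also that this fractional inequality is sharp at $q/\sqrt2$, not at $\tau(q)$.

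\paragraph{The absorbing step fails.} Degree about $q^2/2$ does not give absorption. Split each class into halves and keep the legal triples with an even number of vertices in the second halves. Every degree is at least $\tfrac{q^2-1}2>\tau(q)^2$. Yet no small matching can absorb a balanced $W$ containing an odd number of second-half vertices, since every edge meets the second halves in an even number of vertices. Only the spectral condition rules this out: its links are $K_{q/2,q/2}\sqcup K_{q/2,q/2}$, with $\rho=q/2$. This is why the paper builds its connectors from Lemma~\ref{lem:rigid}: two links of near-maximal spectral radius sharing few edges must be near-complementary half-rectangles.

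\paragraph{The extremal case is missing its decisive step.} This is exactly where $\sigma>\tau(q)$, rather than $\sigma\ge(1/\sqrt2-\delta)q$, is needed. ``Clean up greedily, then Hall or counting'' misses the decisive constraint, which is a balance condition. With barrier sets $A\subseteq X$, $B\subseteq Y$, a perfect matching must have $t_{11}-t_{00}=|A|+|B|-q$. The clean-up can move $|A|+|B|$ in either direction, so afterwards you must find:
\begin{itemize}
\item $s+1$ disjoint edges avoiding $A\cup B$ when $|A|+|B|=q-1-s$; or
\item disjoint type-$11$ edges when $|A|+|B|>q$.
\end{itemize}
The first is Theorem~\ref{thm:defect}, and that is where the exact threshold enters. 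For even $q$ the critical link is $J_q(\tfrac q2-1,1)$, with a block of size one in the third class rather than one of size $\lfloor q/2\rfloor$. The borderline configuration, with both enlarged barrier sets of size $\tfrac q2-1$, needs the perturbation estimates of Claims 2--4 in that proof.

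\paragraph{The almost-perfect-matching step needs robustness.} Applying stability once to $H'$ yields only a single perfect fractional matching. The nibble needs one robustly, for every $H'-E(J)$ with $\Delta_1(J)$ small (Lemma~\ref{lem:robust}).
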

	
	Both thresholds are attained by space barriers. In the non-partite setting, fix $A\subseteq V$ with
	$|A|=\tfrac{in}3-1$ for $i\in\{1,2\}$ and let $T_i(A)$ consist of the triples
	meeting $A$ in at least $i$ vertices; a matching in $T_i(A)$ has fewer than $n/3$
	edges, and $\sigma(T_i(A))=\tfrac{2n}3-2$. In the partite setting, for $q\ge3$, fix
	$C_i\subseteq V_i$ with $|C_1|+|C_2|+|C_3|=q-1$ and take every legal triple meeting
	$C_1\cup C_2\cup C_3$; the choices
	\[
	\bigl(|C_1|,|C_2|,|C_3|\bigr)=
	\begin{cases}
		\bigl(\tfrac{q-1}2,\tfrac{q-1}2,0\bigr), & q\ \text{odd},\\[1ex]
		\bigl(\tfrac{q-2}2,\tfrac{q-2}2,1\bigr), & q\ \text{even}
	\end{cases}
	\]
	give $\sigma=\tau(q)$. The quartic in \eqref{eq:tau} is the discriminant of a
	quadratic attached to the even construction, whose critical links carry one block
	of size one; this is why the two parities cannot be treated by a single formula.
	
	The two proofs share an architecture. Absence of a perfect fractional matching
	gives, by linear programming duality, a fractional vertex cover of deficient
	weight; obtaining a zero-weight vertex and passing to its link produces a graph carrying an induced cover of restricted weight, and the
	spectral radius of such a graph obeys an inequality whose equality cases are
	exactly the barriers. Quantifying that inequality yields a stability theorem, and
	the remaining steps are an absorbing set built below the critical value, an almost
	perfect matching through the nibble, and a direct analysis near the barriers.
	
	The partite problem is the more rigid of the two. Its barriers form a single
	family, and the two barrier sets have sizes close to $q/2$; the links are bipartite, and the asymmetric cover constraints
	are handled by Lemma~\ref{lem:bipineq}; and a matching in a
	$3$-partite $3$-graph leaves a balanced set uncovered for free. Only in the
	extremal analysis is the partite case the harder of the two: repairing the barrier can change its size in either direction,
	so a separate matching construction is needed to correct
	the resulting imbalance.
	
	For notation not defined here we follow West~\cite{West}; for spectral background
	we follow Brouwer and Haemers~\cite{BH} and Godsil and Royle~\cite{GR}. The paper is organized as follows.
	Section~\ref{sec:tools} collects definitions and auxiliary results.
	Section~\ref{sec:fsnon} establishes fractional matching stability
	in both settings.
	Section~\ref{sec:pm} proves Theorems~\ref{thm:nonpartite}
	and~\ref{thm:partite} using absorption and separate analyses
	of the nonextremal and extremal cases.
	Section~\ref{sec:conc} discusses further directions.
	
	\section{Definitions and tools}\label{sec:tools}
	
	A \emph{fractional matching} in a hypergraph $F$ is a function $f:E(F)\to[0,1]$
	with $\sum_{e\ni v}f(e)\le1$ for every vertex; its \emph{size} is $\sum_ef(e)$, the
	maximum size is $\nu^*(F)$, and $f$ is \emph{perfect} if every vertex constraint is
	tight. A \emph{fractional vertex cover} is a function $g:V(F)\to[0,1]$ with
	$\sum_{v\in e}g(v)\ge1$ for every edge; the minimum of $g(V(F))$ is $\tau^*(F)$. We
	write $g(U)=\sum_{v\in U}g(v)$, $\Delta_1$ for the maximum vertex degree of a
	hypergraph, and $e(G)$ for the number of edges of a graph $G$. For a bipartite
	graph with specified classes, the \emph{biadjacency matrix} has largest singular
	value $\rho(G)$.
	
	\begin{lem}\label{lem:basic}
		Let $F$ be a hypergraph and let $G$, $G'$ be graphs.
		\begin{enumerate}
			\item[\rm(i)] $\nu^*(F)=\tau^*(F)$; and if $f$ is a maximum fractional matching and
			$g$ a minimum fractional vertex cover, then $g(v)>0$ forces $\sum_{e\ni v}f(e)=1$
			and $f(e)>0$ forces $\sum_{v\in e}g(v)=1$.
			\item[\rm(ii)] If $G$ is bipartite, then $\rho(G)^2\le e(G)$.
			Equality holds if and only if $G$ is edgeless or consists
			of a complete bipartite component together with isolated
			vertices. In general,
			\(
			\rho(G)^2+\rho(G)\le2e(G).
			\)
			\item[\rm(iii)] If $G'$ arises from $G$ by deleting $k$ edges, then
			$\rho(G')\ge\rho(G)-\sqrt{2k}$, and $\rho(G')\ge\rho(G)-\sqrt k$ when $G$ is
			bipartite.
			\item[\rm(iv)] If $G$ has at most $N$ vertices, $\rho(G)>0$ and $R\subseteq V(G)$,
			then $\rho(G-R)\ge\rho(G)-2|R|N/\rho(G)$; in particular
			$\rho(G-R)\ge\rho(G)-4|R|$ if $\rho(G)\ge N/2$, and $\rho(G-R)\ge\rho(G)-8|R|$ if
			$\rho(G)\ge N/4$.
		\end{enumerate}
	\end{lem}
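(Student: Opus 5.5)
Each of the four parts is a standard fact, so the plan is to check them one at a time with the usual tools.

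For (i), $\nu^*(F)$ and $\tau^*(F)$ are the optimal values of a primal–dual pair of linear programs. The primal is: maximize $\sum_e f(e)$ subject to $\sum_{e\ni v}f(e)\le1$ and $f\ge0$. Any optimal $f$ automatically has $f(e)\le1$, so the upper bound in the definition is harmless. The dual is: minimize $\sum_v g(v)$ subject to $\sum_{v\in e}g(v)\ge1$ and $g\ge0$. Here we may also cap $g$ at $1$ without loss, since replacing $g(v)$ by $\min(g(v),1)$ keeps every edge constraint satisfied. Both programs are feasible ($f=0$ and $g=1$), so strong duality gives $\nu^*=\tau^*$. Complementary slackness then yields the two implications exactly as stated.

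For (ii), let $G$ be bipartite with biadjacency matrix $B$, so that $\rho(G)=\|B\|_2$. Since $\|B\|_2^2\le\|B\|_F^2=e(G)$, we get $\rho^2\le e(G)$. Equality forces $B$ to have rank at most one. A $0/1$ matrix of rank one is an all-ones block on some rows and columns, padded by zeros. This is exactly a complete bipartite component together with isolated vertices, and the converse direction is immediate. For general $G$, I would invoke the known bound of Hong–Stanley type, $\rho\le\frac{-1+\sqrt{1+8e}}{2}$. It is proved by taking a unit Perron vector $x$ and applying Cauchy–Schwarz in the form $\rho^2x_v^2=(\sum_{u\sim v}x_u)^2\le d(v)\sum_{u\sim v}x_u^2$. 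Rearranging that bound gives $\rho^2+\rho\le2e$. The direct route is Stanley's argument: $\rho^2=\sum_v(\sum_{u\sim v}x_u)^2$ is bounded using $\sum_{u\sim v}x_u^2\le 1-x_v^2-\sum_{u\not\sim v,u\ne v}x_u^2$. This step needs the most care, but since the result is classical it can simply be cited.

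For (iii), write $A(G)=A(G')+D$, where $D$ is the adjacency matrix of the $k$ deleted edges. Then $\rho(G)\le\rho(G')+\rho(D)$ by subadditivity of the spectral norm for symmetric matrices. By (ii) applied to the graph formed by the deleted edges, $\rho(D)\le\sqrt{2k}$, because $\rho^2\le\rho^2+\rho\le2k$. In the bipartite case this improves to $\rho(D)\le\sqrt{k}$, since those edges also form a bipartite graph.

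For (iv), take the unit Perron vector $x$ of $G$ and use the Rayleigh quotient on $G-R$. The entries satisfy $\rho x_v=\sum_{u\sim v}x_u\le\sqrt{N}\|x\|=\sqrt N$, so $x_v\le\sqrt N/\rho$. Deleting $v$ removes at most $2x_v\sum_{u\sim v}x_u=2\rho x_v^2\le 2N/\rho$ from $x^\top A x$. Hence $x^\top A(G-R)x\ge\rho-2|R|N/\rho$. The restricted vector has norm at most $1$, and this inequality only matters when its right side is positive, so $\rho(G-R)\ge\rho(G)-2|R|N/\rho(G)$. The two special cases follow by substituting $\rho\ge N/2$ and $\rho\ge N/4$. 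I expect the only real subtlety in the whole lemma to be the general bound in (ii).
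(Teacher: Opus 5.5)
Your proposal is correct and matches the paper's proof part by part: LP duality with complementary slackness for (i), the Frobenius-norm bound with the rank-one equality case plus a citation of Stanley for (ii), and subadditivity with (ii) applied to the deleted edges for (iii). For (iv) you use the same truncated Perron vector with $\theta=\sum_{v\in R}x_v^2\le|R|N/\rho^2$, and your remark that only a positive right-hand side matters plays the role of the paper's separate case $\theta>\tfrac12$. One small caveat: the Cauchy--Schwarz step $\rho^2x_v^2\le d(v)\sum_{u\sim v}x_u^2$ does not by itself rearrange to $\rho^2+\rho\le2e(G)$, since Stanley's argument also needs $\sum_{u\sim v}x_u^2\le1-x_v^2$ and $\sum_v d(v)x_v^2\ge x^{\top}A(G)x=\rho$; as you, like the paper, ultimately cite the bound, this does not affect the proof.
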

	
	\begin{proof}
		Part (i) is linear programming duality together with complementary slackness. The
		first half of (ii) is the singular value bound $\rho(G)^2\le\|B\|_F^2=e(G)$ for the
		biadjacency matrix $B$, with equality precisely when $B$ has rank at most one, and the second
		half is the bound of Stanley~\cite{Stanley}. Part (iii) follows from
		$\rho(G)\le\rho(G')+\rho(G-E(G'))$ together with the two bounds in (ii) applied to
		$G-E(G')$, a subgraph of $G$ with $k$ edges.
		
		For (iv), let $\mathbf p$ be a nonnegative unit Perron vector of $G$.
		Define $\mathbf p'$ on $V(G)$ by $p'_i=p_i$ for $i\notin R$
		and $p'_i=0$ for $i\in R$, and put $\lambda=\rho(G)$
		and $\theta=\sum_{i\in R}p_i^2$. For each $i$, $\lambda p_i=\sum_{j\sim i}p_j\le\sqrt N$
		by Cauchy--Schwarz, so $p_i^2\le N/\lambda^2$ and $\theta\le|R|N/\lambda^2$. If
		$\theta>\tfrac12$ then $2|R|N/\lambda\ge2\lambda\theta>\lambda$ and the assertion is
		vacuous, so assume $\theta\le\tfrac12$. Then
		$\mathbf p'^{\top}A(G)\mathbf p'\ge\lambda(1-2\theta)$ and
		$\|\mathbf p'\|^2=1-\theta\le1$, so
		\[
		\rho(G-R)\ \ge\ \lambda\frac{1-2\theta}{1-\theta}\ \ge\ \lambda(1-2\theta)
		\ \ge\ \lambda-\frac{2|R|N}\lambda ,
		\]
		and the two special cases follow from $N/\lambda\le2$ and $N/\lambda\le4$.
	\end{proof}
	
	The next estimate converts a spectral deficit into a bound on the number of absent
	edges. It is used in both settings, for two families of host graphs.
	
	\begin{lem}\label{lem:missing}
		Let $T$ be a graph on $N$ vertices with $\lambda=\rho(T)>0$ all of whose other
		eigenvalues are nonpositive, and let $\mathbf u$ be a positive unit Perron vector
		of $T$ with $u_i^2\ge\beta>0$ for every $i$. If $K$ is a spanning subgraph of $T$ and
		$D=\lambda-\rho(K)$, then
		\[
		\bigl|E(T)\setminus E(K)\bigr|\ \le\ \frac{D}{\beta}\Bigl(\frac{8N}{\lambda}+2\Bigr).
		\]
	\end{lem}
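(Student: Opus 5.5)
The plan is to compare $\rho(K)$ with the Rayleigh quotient of $A(T)$, using the Perron vector of $K$ rather than that of $T$. This gives an upper bound on $\rho(K)$, and so a lower bound on $D$ in terms of the missing edges. Put $E'=E(T)\setminus E(K)$, $m=|E'|$, and let $M=A(T)-A(K)$ be the adjacency matrix of the missing edges. Let $\mathbf x$ be a nonnegative unit Perron vector of $K$, and decompose $\mathbf x=a\mathbf u+\mathbf w$ with $\mathbf w\perp\mathbf u$, $a\ge0$ and $t=\|\mathbf w\|$, so that $a^2+t^2=1$. Every eigenvalue of $T$ other than $\lambda$ is nonpositive, hence $\mathbf w^{\top}A(T)\mathbf w\le0$ and $\mathbf x^{\top}A(T)\mathbf x\le\lambda a^2$. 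Therefore
\[
\rho(K)=\mathbf x^{\top}A(T)\mathbf x-\mathbf x^{\top}M\mathbf x\le\lambda a^2-2\sum_{ij\in E'}x_ix_j ,
\]
which rearranges to $D\ge\lambda t^2+2\sum_{ij\in E'}x_ix_j$.

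Next I bound $\sum_{ij\in E'}x_ix_j$ from below by something proportional to $m$. Set $y_i=au_i$. Because $x_i\ge0$ and $x_i\ge y_i-|w_i|$, each product satisfies $x_ix_j\ge y_iy_j-y_i|w_j|-y_j|w_i|$. To check this, note that if either factor $y_i-|w_i|$ or $y_j-|w_j|$ is negative, the right-hand side is already nonpositive. The main term gives $\sum_{E'}y_iy_j\ge a^2\beta m$, using $u_iu_j\ge\beta$. For the cross terms, the Cauchy--Schwarz step from the proof of Lemma~\ref{lem:basic}(iv) gives $u_i\le\sqrt N/\lambda$. Another application of Cauchy--Schwarz gives $\sum_i d_{E'}(i)|w_i|\le\sqrt{\sum_i d_{E'}(i)^2}\,t\le\sqrt{2mN}\,t$. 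Together these bound the cross terms by $c\sqrt m\,t$ with $c=O(N/\lambda)$. The result is
\[
D\ \ge\ \lambda t^2+2a^2\beta m-2c\sqrt m\,t .
\]

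The main obstacle is this cross term. Minimising naively over $t$ costs $c^2m/\lambda\approx N^2m/\lambda^3$, which is too large when $\lambda$ is not of order $N$. So instead I will treat the inequality as a quadratic in $\sqrt m$ and use both pieces of information about $t$, and I expect to split into cases according to the size of $t$.

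In the first case $t^2\le D/\lambda$, which in fact always holds by the displayed bound. Here I use this to get $a^2\ge\tfrac12$ when $D\le\lambda/2$. Dividing by $\beta$ and solving $2\beta a^2 s^2-2cts-D\le0$ for $s=\sqrt m$ should give $m\le\frac{D}{\beta}\bigl(\frac{8N}{\lambda}+2\bigr)$. The term $2D/\beta$ comes from the $D$ part, and the $8ND/(\beta\lambda)$ part comes from $c^2t^2/\beta^2\lesssim (N/\lambda)^2(D/\lambda)$, after using $\beta\le1/N$ and $\lambda\le N$ to absorb factors.

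In the second case $D>\lambda/2$. Here the claimed right-hand side is at least $\frac{\lambda}{2\beta}\cdot\frac{8N}{\lambda}=4N/\beta\ge4N^2$, using $\beta\le1/N$, and this exceeds $e(T)\ge m$, so the statement is trivial.

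I expect to spend most of the effort tuning these constants so that they land exactly on $8N/\lambda+2$.
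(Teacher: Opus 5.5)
There is a genuine gap. Your setup is the paper's: with $\mathbf x$ the Perron vector of $K$ you correctly get both $\lambda t^2\le D$ and $2\sum_{ij\in E'}x_ix_j\le D$. The paper uses exactly these two facts, in the form $\|\mathbf z-\mathbf u\|^2=2(1-a)\le2D/\lambda$ and $\mathbf z^{\top}(A(T)-A(K))\mathbf z\le D$. The failure is in turning them into a count of missing edges.

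Your global Cauchy--Schwarz estimate gives $c=\sqrt2\,aN/\lambda$. Solving your quadratic with $t^2\le D/\lambda$ and $a^2\ge\tfrac12$ then yields only
\[
m\ \le\ \frac{D}{\beta a^2}+\frac{c^2t^2}{\beta^2a^4}\ \le\ \frac{D}{\beta}\Bigl(2+\frac{8N}{\lambda}\cdot\frac{N}{2\lambda^2\beta}\Bigr).
\]
The error term has $\beta^2$ in the denominator, so reaching $8ND/(\lambda\beta)$ needs the \emph{lower} bound $\beta\ge N/(2\lambda^2)$. The facts $\beta\le1/N$ and $\lambda\le N$ that you plan to use point the wrong way.

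The hypotheses give no such lower bound. For the star $K_{1,N-1}$, which has a single positive eigenvalue, $\lambda^2\beta=\tfrac12$, so you lose a factor of order $N$. Even for $T=K_{n/3}\vee I_{2n/3}$ from Lemma~\ref{lem:split}, one has $\lambda\approx2n/3$ and $\beta\approx1/(2n)$, so $N/(2\lambda^2\beta)\approx9/4>1$ and the stated constant is missed.

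No case split on $t$ repairs this. The inequalities you keep do not exclude $\lambda t^2=D$, and then your displayed bound allows $m$ as large as $c^2D/(\lambda a^4\beta^2)$. The information discarded by Cauchy--Schwarz is really needed.

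The missing idea is to localize instead of averaging. Let $B=\{i:x_i<u_i/2\}$.
\begin{itemize}
\item Each $i\in B$ has $(x_i-u_i)^2>u_i^2/4\ge\beta/4$, while $\|\mathbf x-\mathbf u\|^2=2(1-a)\le2t^2\le2D/\lambda$. Hence $|B|\le8D/(\lambda\beta)$, and at most $N|B|$ missing edges meet $B$.
\item A missing edge $ij$ avoiding $B$ has $2x_ix_j\ge u_iu_j/2\ge\beta/2$. Since $2\sum_{ij\in E'}x_ix_j\le D$, there are at most $2D/\beta$ such edges.
\end{itemize}
Adding the two counts gives exactly $\frac D\beta\bigl(\frac{8N}\lambda+2\bigr)$. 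This needs no quadratic, no case split and no assumption $D\le\lambda/2$. A vertex where $\mathbf x$ is badly depressed is charged its full degree once, rather than entering a degree-weighted Cauchy--Schwarz bound, which is where your extra factor $N/(\lambda^2\beta)$ came from.
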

	
	\begin{proof}
		Let $\mathbf z$ be a nonnegative unit Perron vector of $K$ and
		$a=\langle\mathbf z,\mathbf u\rangle$. Since $A(T)-A(K)$ is nonnegative and every
		eigenvalue of $T$ other than $\lambda$ is nonpositive,
		$\lambda-D=\mathbf z^{\top}A(K)\mathbf z\le\mathbf z^{\top}A(T)\mathbf z\le\lambda a^2$,
		so $\|\mathbf z-\mathbf u\|^2=2(1-a)\le2D/\lambda$. Put $B=\{i:z_i<u_i/2\}$. For
		$i\in B$ one has $(z_i-u_i)^2>u_i^2/4\ge\beta/4$, whence $|B|\le8D/(\lambda\beta)$,
		and at most $N|B|$ absent edges meet $B$. For an absent edge $ij$ with
		$i,j\notin B$ we have $2z_iz_j\ge\beta/2$, while
		$\sum_{ij\in E(T)\setminus E(K)}2z_iz_j=\mathbf z^{\top}(A(T)-A(K))\mathbf z\le D$,
		so there are at most $2D/\beta$ of them.
	\end{proof}
	
	Both settings need the spectral radius of a graph built from two blocks. For
	$0\le a,b\le q$ let $J_q(a,b)$ be the bipartite graph with classes of size $q$
	whose edges are the pairs meeting a fixed $a$-set in the first class or a fixed
	$b$-set in the second, and for $0\le p\le N$ let $S_{N,p}=K_p\vee I_{N-p}$.
	
	\begin{lem}\label{lem:blocks}
		Put $T=q(a+b)-ab$ and $D=ab(q-a)(q-b)$.
		\begin{enumerate}
			\item[\rm(i)] $\rho(J_q(a,b))^2=\tfrac12\bigl(T+\sqrt{T^2-4D}\bigr)$; in particular
			$\rho(J_q(a,0))^2=qa$. The value is nondecreasing in each of $a$ and $b$.
			\item[\rm(ii)] If $a\ge b\ge0$ and $2a+b\le q$, then
			$\rho(J_q(a,b))^2\le\tfrac12q(2a+b)-\tfrac{3bq^2}{16(q+b)}$.
			\item[\rm(iii)] For $1\le p\le N-1$, the graph $S_{N,p}$ has exactly one
			positive eigenvalue $\lambda_0$, which satisfies
			$
			\lambda_0^2-(p-1)\lambda_0-p(N-p)=0.
			$
			Every other eigenvalue is nonpositive.
			If $u$ is its positive unit Perron vector, then
			$
			\min_i u_i^2
			=
			\left(\frac{\lambda_0^2}{p}+N-p\right)^{-1}.
			$
		\end{enumerate}
	\end{lem}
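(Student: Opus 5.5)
For part (i), the plan is to work with the $q\times q$ biadjacency matrix $B$ of $J_q(a,b)$, so that $\rho(J_q(a,b))^2$ is the largest eigenvalue of $BB^{\top}$. With $\mathbf 1_A,\mathbf 1_{B'}$ the indicator vectors of the fixed $a$-set and $b$-set, one has $B=\mathbf 1_A\mathbf 1^{\top}+\mathbf 1\mathbf 1_{B'}^{\top}-\mathbf 1_A\mathbf 1_{B'}^{\top}$. This has rank at most two, so $BB^{\top}$ has at most two nonzero eigenvalues $\mu_1\ge\mu_2\ge0$. Their sum is $\operatorname{tr}BB^{\top}=e(J_q(a,b))=qa+qb-ab=T$. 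By Cauchy--Binet, their product is the sum of the squares of all $2\times2$ minors of $B$. There are only two row types: rows in $A$ are all ones, and rows outside $A$ are the indicator of $B'$. Hence a $2\times2$ minor is nonzero exactly when it uses one row from $A$, one row outside $A$, one column in $B'$ and one column outside $B'$, and then it equals $\pm1$. This gives $\mu_1\mu_2=a(q-a)b(q-b)=D$, so $\mu_1$ is the larger root of $x^2-Tx+D$, which is the stated formula; for $b=0$ we get $D=0$ and $\mu_1=qa$. Monotonicity in $a$ and $b$ is immediate, since enlarging either fixed set only adds edges and the spectral radius is monotone under taking subgraphs (Perron--Frobenius).

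For part (iii), the plan is to use the equitable partition of $S_{N,p}$ into the clique $K_p$ and the independent set $I_{N-p}$. Its quotient matrix is $\begin{pmatrix}p-1&N-p\\ p&0\end{pmatrix}$, with characteristic polynomial $\lambda^2-(p-1)\lambda-p(N-p)$. This polynomial has one positive root $\lambda_0$ and one negative root, since the constant term is negative for $1\le p\le N-1$. The remaining eigenvectors are those summing to zero on the clique, with eigenvalue $-1$, and those summing to zero on $I_{N-p}$, with eigenvalue $0$. So every other eigenvalue is nonpositive. For the Perron vector, write $x$ for its constant value on the clique and $y$ for its value on the independent set. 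The eigen-equation at a vertex of $I_{N-p}$ gives $\lambda_0y=px$, and normalization gives $px^2+(N-p)y^2=1$, i.e. $y^2=(\lambda_0^2/p+N-p)^{-1}$. It remains to check that $y\le x$, i.e. $\lambda_0\ge p$. This holds because evaluating the quadratic at $p$ gives $p^2-(p-1)p-p(N-p)=p(1-N+p)\le0$.

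For part (ii), the case $b=0$ is equality by (i), so assume $b\ge1$, and let $X=\tfrac12q(2a+b)-\tfrac{3bq^2}{16(q+b)}$ be the right-hand side. The linearization $\sqrt{T^2-4D}\le T-2D/T$ is too lossy: it already fails for $a=b=1$. So the plan is to compare roots directly. We have $\mu_1\le X$ if and only if $X\ge T/2$ and $X^2-TX+D\ge0$. The first condition is easy: $2X-T=qa+ab-\tfrac{3bq^2}{8(q+b)}\ge qa-\tfrac{3q}{8}b\ge0$, since $a\ge b$.

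The second condition is the main obstacle. I would clear the denominator $16(q+b)$ and view $16(q+b)\,(X^2-TX+D)$ as a polynomial in $a$, $b$, $q$. I would then substitute $a=b+s$ with $s\ge0$ and $q=2a+b+t$ with $t\ge0$ (equivalently $q=3b+2s+t$), and aim to show that all coefficients are nonnegative, or at least group them into manifestly nonnegative terms. The constant $\tfrac3{16}$ looks tuned to the extreme case $q=2a+b$, $a=b$ (so $q=3b$), where $X=\tfrac32qb-\tfrac{9}{64}q^2$ and $\mu_1$ should be checked numerically first. If the coefficient check fails, the fallback is to treat $X^2-TX+D$ as a quadratic in $a$ on the interval $[b,(q-b)/2]$ and verify nonnegativity at the endpoints together with a concavity or convexity argument in between.
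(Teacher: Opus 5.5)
Your parts (i) and (iii) are correct and close to the paper. In (i) you obtain the trace and the second elementary symmetric function of the eigenvalues of $BB^{\top}$ on the full matrix via Cauchy--Binet, while the paper first compresses $B$ to its $2\times2$ block matrix and reads off trace and determinant; this is the same computation. In (iii) you get $\lambda_0\ge p$ by evaluating the quadratic at $p$ instead of using $K_{p+1}\subseteq S_{N,p}$; either works.

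\textbf{The gap is in (ii).} Your reduction is sound: since $2X\ge T$, the bound $\mu_1\le X$ is equivalent to $X^2-TX+D\ge0$. But that inequality is the whole content of (ii), and you only describe how you would look for it. As stated, the search is also not safe. First, $X^2$ has denominator $256(q+b)^2$, not $16(q+b)$. Second, part of the expression vanishes inside the admissible region, as shown below. The missing step is a short identity. Put $u=qa+\tfrac{qb}2$, $v=ab-\tfrac{qb}2$ and $c_0=\tfrac{3bq^2}{16(q+b)}$. Then $X=u-c_0$, $X-T=v-c_0$ and $u+v=a(q+b)$, so $c_0(u+v)=\tfrac3{16}abq^2$. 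Expanding $X(X-T)+D=uv+D-c_0(u+v)+c_0^2$ gives
\[
X^2-TX+D=\frac b{16}\Bigl(b(q-4a)^2+5(a-b)q^2\Bigr)+\frac{9b^2q^4}{256(q+b)^2},
\]
which is nonnegative because $a\ge b\ge0$.

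This identity also shows why a sign-pattern check is fragile. At $a=b$ the first summand equals $\tfrac{b^2}{16}(q-4b)^2$, so it vanishes at $q=4b$, a point with $2a+b\le q$. There positivity comes only from the $c_0^2$ term. In your variables $a=b+s$, $q=3b+2s+t$, the bracket $(5a-4b)q^2-8abq+16a^2b$ contains the monomial $-2b^2t$.

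Completed this way, your route differs from the paper's. The paper normalizes and uses the identity $(x+xy)^2-\Delta=y\bigl(2x-y+2xy(2x-1)\bigr)$ to rationalize $\tfrac{2x+y}2-\lambda$. It needs $x\le\tfrac12$, that is, the hypothesis $2a+b\le q$, to make the bracket decreasing in $y$ and so bound it below by $\tfrac34x$. Your certificate uses only $a\ge b\ge0$. It is more direct, and it shows that the hypothesis $2a+b\le q$ is not needed for (ii).
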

	
	\begin{proof}
		For (i), vectors constant on each block reduce the biadjacency matrix of
		$J_q(a,b)$ to
		$\left(\begin{smallmatrix}\sqrt{ab}&\sqrt{a(q-b)}\\\sqrt{(q-a)b}&0\end{smallmatrix}\right)$,
		all other singular values being zero; the trace and determinant of the product of
		this matrix with its transpose are $T$ and $D$. Monotonicity holds because
		enlarging either block adds edges.
		
		For (ii), set $x=a/q$, $y=b/q$, $s=x+y-xy$ and $\Delta=s^2-4xy(1-x)(1-y)$, so that
		the normalized value is $\lambda=\tfrac12(s+\sqrt\Delta)$. Expanding,
		\begin{equation}\label{eq:blockid}
			(x+xy)^2-\Delta=y\bigl(2x-y+2xy(2x-1)\bigr),
		\end{equation}
		an identity that also appears in the proof of~\cite[Lemma~3.2]{LY}.
		If $x=0$ then $y=0$ and there is nothing to prove, so assume $x>0$; the hypotheses
		give $0\le y\le x\le\tfrac12$. For fixed $x$ the bracket in \eqref{eq:blockid} is
		decreasing in $y$, its coefficient of $y$ being $-1+2x(2x-1)<0$, so
		$2x-y+2xy(2x-1)\ge x(4x^2-2x+1)\ge3x/4$. In particular $\sqrt\Delta\le x+xy$, and
		rationalizing,
		\[
		\frac{2x+y}2-\lambda=\frac{x+xy-\sqrt\Delta}2
		=\frac{y\bigl(2x-y+2xy(2x-1)\bigr)}{2\bigl(x+xy+\sqrt\Delta\bigr)}
		\ \ge\ \frac{3xy/4}{4x(1+y)}=\frac{3y}{16(1+y)} .
		\]
		Multiplying by $q^2$ gives (ii).
		
		For (iii), the quotient matrix of $S_{N,p}$ with respect
		to the clique and the independent set is
		\(
		\begin{pmatrix}
			p-1&N-p\\
			p&0
		\end{pmatrix}.
		\)
		Its characteristic polynomial is the stated quadratic.
		Since $1\le p\le N-1$, its determinant is negative, so its
		two eigenvalues have opposite signs.
		Every remaining eigenvalue belongs to $\{-1,0\}$.
		
		The positive unit Perron vector is constant on each part.
		Write $\alpha$ for its value on the clique and $\beta$
		for its value on the independent set.
		The eigenvalue equation at an independent vertex gives
		$\lambda_0\beta=p\alpha$.
		Since $S_{N,p}$ contains $K_{p+1}$, spectral monotonicity
		gives $\lambda_0\ge p$, and hence
		$\alpha=(\lambda_0/p)\beta\ge\beta$.
		The normalization yields
		\[
		1=p\alpha^2+(N-p)\beta^2
		=\left(\frac{\lambda_0^2}{p}+N-p\right)\beta^2.
		\]
		Consequently,
		\[
		\min_i u_i^2=\beta^2
		=\left(\frac{\lambda_0^2}{p}+N-p\right)^{-1}.
		\]
	\end{proof}
	
	Finally, both extremal analyses end with the same combinatorial lemma.
	
	\begin{lem}\label{lem:box}
		Let $F$ be a $3$-partite $3$-graph with classes of the same size $t\ge16$ in which
		every vertex lies in at most $t^2/16$ missing transversal triples. Then $F$ has a
		perfect matching.
	\end{lem}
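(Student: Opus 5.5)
The plan is to take a maximum matching $M$ of $F$, suppose it is not perfect, and enlarge it by a local switch, which contradicts maximality. Write $m=|M|$. Since every edge of $F$ meets each class exactly once, $M$ misses the same number $t-m\ge1$ of vertices in each class. So we may fix uncovered vertices $x\in V_1$, $y\in V_2$, $z\in V_3$.

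\emph{Step 1: $m\ge t/2$.} Suppose instead that $m<t/2$. The vertex $x$ has $(t-m)^2>t^2/4$ transversal pairs inside the uncovered parts of $V_2$ and $V_3$. At most $t^2/16$ of the corresponding triples through $x$ are missing. Hence some triple $\{x,y',z'\}$ with $y',z'$ uncovered is an edge, and adding it to $M$ contradicts maximality. Therefore $m\ge t/2$.

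\emph{Step 2: a two-edge switch.} Take an ordered pair $(e,e')$ of distinct edges of $M$, with $e=\{a,b,c\}$ and $e'=\{a',b',c'\}$, where $a,a'\in V_1$, $b,b'\in V_2$, $c,c'\in V_3$. Consider the three pairwise disjoint transversal triples
\[
T_1=\{x,b,c'\},\qquad T_2=\{a',y,c\},\qquad T_3=\{a,b',z\}.
\]
Together they cover exactly $e\cup e'\cup\{x,y,z\}$. If all three are edges of $F$, then replacing $e,e'$ by $T_1,T_2,T_3$ gives a matching of size $m+1$. So for every ordered pair $(e,e')$ at least one of $T_1,T_2,T_3$ is a missing triple.

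\emph{Step 3: counting.} The map $(e,e')\mapsto T_1$ is injective, because the pair $(b,c')$ determines the ordered pair $(e,e')$. Hence at most $t^2/16$ ordered pairs fail at $T_1$, since all those missing triples contain $x$. The same holds for $T_2$, via $(a',c)$ and the vertex $y$, and for $T_3$, via $(a,b')$ and the vertex $z$. Maximality therefore forces
\[
m(m-1)\le \frac{3t^2}{16}.
\]
But $m\ge t/2$ gives $m(m-1)\ge \tfrac t2\bigl(\tfrac t2-1\bigr)$. This exceeds $3t^2/16$ exactly when $t^2/16>t/2$, that is, when $t>8$, which certainly holds for $t\ge16$. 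This contradiction shows that $M$ is perfect.

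The main obstacle is choosing the right switch. One-edge switches are useless: a vertex of a matching edge pairs with only $m$ specific pairs, and $t^2/16$ missing triples can kill all of them. The point of using two matching edges is that each new triple uses two matching vertices from \emph{different} edges. Then the ordered pairs $(e,e')$ are in bijection with the pairs in each new triple, and the budget of $t^2/16$ missing triples per vertex becomes a bound on the number of failing switches. I would double-check this injectivity and the arithmetic in Step 3 carefully, since that is where the constant $1/16$ and the condition $t\ge16$ are used.
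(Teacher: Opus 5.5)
Your proof is correct. It has the same overall shape as the paper's: a maximum matching, a lower bound on $m$ from the uncovered vertices, then a local exchange whose failures are counted. The difference is in the exchange, and it is worth noting.

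The paper first derives the stronger bound $m\ge 3t/4$ from $s^2\le t^2/16$. It then uses a $3$-for-$4$ exchange: for distinct $i,j,k$ it replaces the three matching edges $\{x_i,y_i,z_i\},\{x_j,y_j,z_j\},\{x_k,y_k,z_k\}$ by $\{x,y_i,z_j\}$, $\{x_i,y,z_k\}$, $\{x_j,y_k,z\}$ and $\{x_k,y_j,z_i\}$. It bounds each of the four failure events by $m\cdot t^2/16\le t^3/16$ ordered triples $(i,j,k)$; the fourth bound uses the hypothesis at the matched vertex $x_k$. It concludes from $m(m-1)(m-2)\ge 12\cdot 11\cdot 10\,(t/16)^3>t^3/4$.

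Your $2$-for-$3$ exchange is more economical. Each new triple contains exactly one uncovered vertex plus two matched vertices from different edges of $M$. So the injectivity of $(e,e')\mapsto T_i$ charges each failure type at most $t^2/16$, and the hypothesis is used only at $x,y,z$. As a result you need only $m\ge t/2$, and your arithmetic works for every $t>8$. The paper's larger exchange needs $m\ge 3t/4$ and the condition $t\ge 16$ for its numerical check. The extra complexity is not needed for this lemma, and your version even tolerates a slightly larger missing-triple budget than the paper's count does.
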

	
	\begin{proof}
		Let $M=\{\{x_i,y_i,z_i\}:i\in[m]\}$ be a maximum matching, with $x_i,y_i,z_i$ in
		the first, second and third class, and put $s=t-m$. If $s>0$, choose uncovered
		$x,y,z$, one in each class; no triple of uncovered vertices is an edge, so $x$ lies
		in at least $s^2$ missing triples and $s^2\le t^2/16$, giving $m\ge3t/4$. For
		distinct $i,j,k$ consider
		\[
		\{x,y_i,z_j\},\qquad \{x_i,y,z_k\},\qquad \{x_j,y_k,z\},\qquad \{x_k,y_j,z_i\},
		\]
		which are disjoint and cover the $i$th, $j$th and $k$th edges of $M$ together with
		$x,y,z$. Each of the four requirements fails for at most $t^3/16$ ordered triples
		$(i,j,k)$, whereas $m(m-1)(m-2)\ge12\cdot11\cdot10\,(t/16)^3>t^3/4$ for $t\ge16$.
		Replacing three edges of $M$ by four contradicts maximality.
	\end{proof}
	
	For a hypergraph $G$, write
	$\Delta_2(G)=\max_{u\ne v}|\{e\in E(G):\{u,v\}\subseteq e\}|$
	for its maximum codegree, where the maximum is taken over
	distinct vertices $u,v\in V(G)$.
	We also use the Chernoff bounds in their standard forms, and the nibble theorem of
	Frankl and R\"odl~\cite{FR} in the following form: for every integer $k\ge2$ and
	every $\zeta>0$ there are $\tau>0$ and $d_0$ such that, if $G$ is a $k$-uniform
	hypergraph on $N$ vertices and $N\ge D\ge d_0$ satisfies $(1-\tau)D<d_G(v)<(1+\tau)D$
	for every vertex and $\Delta_2(G)<\tau D$, then $G$ has a matching leaving at most
	$\zeta N$ vertices uncovered.
	
	\section{Fractional stability}\label{sec:fsnon}
	
	Both settings need the same passage: from the absence of a perfect fractional
	matching to a barrier. The partite case rests on a single spectral inequality for
	bipartite graphs carrying an asymmetric fractional cover, which we prove first.
	Lu and Yuan~\cite[Lemma~3.2]{LY} showed that these hypotheses, with strict
	inequality in the second condition, give $\rho(G)<(1/\sqrt2+\gamma)q$ for every
	$\gamma>0$ and every large even $q$; Lemma~\ref{lem:bipineq} replaces this by the exact
	bound, determines the equality cases, and adds a stability version.
	
	\begin{lem}\label{lem:bipineq}
		Let $G$ be a bipartite graph with classes $X$ and $Y$ of size $q$ admitting a
		fractional vertex cover $g$ with
		\begin{equation}\label{eq:asym}
			g(X)\ge g(Y),\qquad 2g(X)+g(Y)\le q .
		\end{equation}
		Then $\rho(G)\le q/\sqrt2$, with equality precisely when $q$ is even and, after
		possibly interchanging the classes, $E(G)=S\times Y$ for some $S\subseteq X$ of
		size $q/2$. Moreover, for every $\varepsilon>0$ there is $\delta>0$ such that
		\eqref{eq:asym} and $\rho(G)\ge(1/\sqrt2-\delta)q$ force, after possibly
		interchanging the classes, the existence of $S\subseteq X$ with
		\begin{equation}\label{eq:bipstab}
			\bigl||S|-q/2\bigr|\le\varepsilon q,\qquad
			\bigl|E(G)\,\triangle\,(S\times Y)\bigr|\le\varepsilon q^2 .
		\end{equation}
	\end{lem}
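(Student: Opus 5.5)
The plan is to reduce to the two-block graphs $J_q(a,b)$ of Lemma~\ref{lem:blocks} by thresholding the cover. For $t\in(0,1]$ put
\[
A_t=\{x\in X:\ g(x)\ge t\},\qquad B_t=\{y\in Y:\ g(y)>1-t\},
\]
with $a_t=|A_t|$ and $b_t=|B_t|$. If $xy\in E(G)$ then $g(x)+g(y)\ge1$, so either $g(x)\ge t$ or $g(y)>1-t$. Hence $G\subseteq J_q(a_t,b_t)$ for every $t$, and by monotonicity of the spectral radius $\rho(G)^2\le\rho(J_q(a_t,b_t))^2$ for every $t$. The functions $a_t$ and $b_t$ are monotone step functions with $\int_0^1a_t\,dt=g(X)$ and $\int_0^1b_t\,dt=g(Y)$. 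The hypotheses \eqref{eq:asym} therefore become integral constraints
\[
\int_0^1(a_t-b_t)\,dt\ge0,\qquad \int_0^1(2a_t+b_t)\,dt\le q .
\]
Since $a_t-b_t$ is nonincreasing in $t$, it is nonnegative on an initial interval $(0,t_0]$.

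The goal is to find one $t$ with $a_t\ge b_t$ and $2a_t+b_t\le q$. Lemma~\ref{lem:blocks}(ii) then gives
\[
\rho(G)^2\le \tfrac12q(2a_t+b_t)-\tfrac{3b_tq^2}{16(q+b_t)}\le \tfrac{q^2}2 ,
\]
which is the bound $\rho(G)\le q/\sqrt2$. \textbf{This selection of $t$ is the main obstacle.} Pure averaging supplies a $t$ satisfying each condition separately, but not both at once. Near $t=0$ the set $A_t$ is large, and near $t=1$ we may have $b_t>a_t$; for example $a=0$, $b=q$ is the complete graph, with $\rho^2=q^2$.

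My first attempt at the obstacle is to use that $t\mapsto 2a_t+b_t$ is itself a step function with integral at most $q$. Suppose that on $(0,t_0]$ we always had $2a_t+b_t>q$. Then on $(t_0,1]$ we have $b_t>a_t$, while $a_t$ is nonincreasing, $b_t$ is nondecreasing, and the total integral of $a_t-b_t$ is nonnegative. From these facts I would try to derive a contradiction with $\int_0^1(2a_t+b_t)\,dt\le q$. Concretely, I would bound $\int_0^{t_0}(2a_t+b_t)\,dt$ from below using $a_t\ge a_{t_0}$. I would also bound $\int_{t_0}^1 b_t\,dt$ from below using $\int_{t_0}^1 b_t\,dt\ge\int_{t_0}^1 a_t\,dt$ together with the balance condition.

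If that fails, the fallback is to prove directly a piecewise-linear majorant $\Phi(a,b)\ge\rho(J_q(a,b))^2$ that is valid for all $a,b$. It should equal $\tfrac12q(2a+b)$ in the relevant region and be built so that Jensen's inequality applies to $t\mapsto(a_t,b_t)$. That would replace the pointwise choice of $t$ by an average.

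For the equality case, equality forces every inequality above to be tight at the chosen $t$. Tightness in Lemma~\ref{lem:blocks}(ii) requires $b_t=0$, and then $2a_t=q$, so $\rho(J_q(a_t,0))^2=qa_t=q^2/2$. This forces $q$ even and $a_t=q/2$. It also forces $G$ to have the same spectral radius as $K_{a_t,q}\cup$ isolated vertices. Since $K_{a,q}$ is connected with a positive Perron vector, removing any edge strictly lowers $\rho$, so $E(G)=A_t\times Y$. The possible interchange of classes enters only through the symmetric fallback.

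For stability, suppose $\rho(G)\ge(1/\sqrt2-\delta)q$. The error term in (ii) then gives $b_t/q=O(\delta)$. The bound $\rho(J_q(a_t,b_t))^2\le q a_t+qb_t$ (the number of edges of $J_q(a_t,b_t)$, via Lemma~\ref{lem:basic}(ii)) gives $a_t\ge q/2-O(\delta q)$, and $2a_t+b_t\le q$ gives $a_t\le q/2$. Take $S=A_t$. Edges of $G$ outside $S\times Y$ lie in $X\times B_t$, so there are at most $qb_t=O(\delta q^2)$ of them. For the missing edges, apply Lemma~\ref{lem:missing} to $T=K_{a_t,q}$ and $K=G\cap T$. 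Here $T$ has the eigenvalue signs required, $\min u_i^2\gtrsim 1/q$, and the deficit is $\rho(T)-\rho(K)=O(\delta q)$ by Lemma~\ref{lem:basic}(iii). This gives $O(\delta q^2)$ missing edges. Choosing $\delta$ small relative to $\varepsilon$ yields \eqref{eq:bipstab}.
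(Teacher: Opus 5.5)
There is a genuine gap, and it is more than the selection of $t$. No argument that bounds $\rho(G)$ by the spectral radius of a single two-block graph $J_q(a,b)\supseteq G$ can reach $q/\sqrt2$.

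Here is a counterexample. Take $3\mid q$, sets $P\subseteq X$ and $R\subseteq Y$ of size $2q/3$, $E(G)=P\times R$, and $g=\tfrac12$ on $P\cup R$ and $0$ elsewhere. Then $g(X)=g(Y)=q/3$, so \eqref{eq:asym} holds, and $\rho(G)=2q/3<q/\sqrt2$. Your thresholds give $(a_t,b_t)=(2q/3,0)$ for $t\le\tfrac12$ and $(0,2q/3)$ for $t>\tfrac12$. In both cases $\rho(J_q(a_t,b_t))^2=2q^2/3>q^2/2$ by Lemma~\ref{lem:blocks}(i).

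The failure is structural. Any graph of pairs meeting a fixed $A\subseteq X$ or $B\subseteq Y$ that contains $P\times R$ must have $P\subseteq A$ or $R\subseteq B$. It therefore contains $J_q(2q/3,0)$ or its transpose. So no other rounding rule helps, and neither does your Jensen fallback, since any such averaging still only majorizes $\min_t\rho(J_q(a_t,b_t))^2$.

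Your orientation requirement also fails separately. For the extremal graph $X\times S$ with $S\subseteq Y$, $|S|=q/2$, and the admissible cover $\tfrac13$ on $X$, $\tfrac23$ on $S$, one gets $(a_t,b_t)=(q,0)$ for $t\le\tfrac13$ and $(0,q/2)$ for $t>\tfrac13$. So $a_t\ge b_t$ and $2a_t+b_t\le q$ never hold together. Relaxing to $2\max\{a_t,b_t\}+\min\{a_t,b_t\}\le q$ rescues this example but not the previous one. Your equality and stability paragraphs are built on the chosen $t$, so they inherit the gap.

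The missing idea is to use the cover at two levels simultaneously. You need to keep an intermediate weight class, so that the template contains a complete block between the intermediate groups of $X$ and $Y$, not only full row or column blocks.

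The paper does this by taking an extreme point $z$ of the polytope of covers with $z(X)\ge z(Y)$ that minimizes $2z(X)+z(Y)$. Then one of two things happens:
\begin{itemize}
\item $z$ is integral. Here your $J_q$ argument with Lemma~\ref{lem:blocks}(ii) is exactly what is used.
\item $z(X)=z(Y)$ and $z$ takes only the values $0,t,1$ on $X$ and $0,1-t,1$ on $Y$. Here $G$ sits in a three-group template with normalized matrix $C(a,p,b,r)$. Its norm is bounded by separate convexity in $a$ and $b$ and evaluation at the corners. The corner $a=b=0$ is precisely the $K_{pq,rq}$ block that single thresholds cannot see. It gives $1/(9t(1-t))$, which is at most $\tfrac12$ on the relevant range, with equality only at $t=\tfrac13$ (or $\tfrac23$ after transposing). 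This is where the configuration $X\times S$ comes from.
\end{itemize}
The equality and stability statements must then be proved for this fractional alternative as well. The paper does this by compactness over the parameters $(t,a,p,b,r)$.
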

	
	\begin{proof}
		\emph{Reduction to few distinct weights.} The polytope
		$P=\{z\in[0,1]^{X\cup Y}: z(x)+z(y)\ge1\ (xy\in E(G)),\ z(X)\ge z(Y)\}$ is nonempty
		and compact; choose an extreme point $z$ of $P$ minimizing $2z(X)+z(Y)$, which
		exists because the minimizers form a nonempty face. Feasibility of $g$ gives
		\begin{equation}\label{eq:zsum}
			2z(X)+z(Y)\le q .
		\end{equation}
		Let $F=\{v:0<z(v)<1\}$ and retain on $F$ the edges $xy$ with $z(x)+z(y)=1$. On each
		component of the resulting graph meeting both classes, following a path of tight
		edges shows that all weights in $X$ equal some $t\in(0,1)$ and all weights in $Y$
		equal $1-t$. Adding $h$ to the $X$-weights of one component and subtracting $h$
		from its $Y$-weights preserves every tight equation, and an isolated fractional
		vertex may be perturbed by itself; for $h$ small of either sign the box constraints
		and the slack inequalities persist, and no tight edge joins a fractional vertex to
		an integral one. If $z(X)>z(Y)$ then any such perturbation is feasible in both
		directions, contradicting extremality; and if $z(X)=z(Y)$ and there were two
		components, a nonzero combination of their perturbations preserving
		$z(X)-z(Y)=0$ would do the same. A single isolated fractional vertex is also
		excluded, since $z(X)=z(Y)$ would then equate a fractional weight with an integer.
		So either $z$ is integral, or $z(X)=z(Y)$ and the weights lie in $\{0,t,1\}$ on $X$
		and in $\{0,1-t,1\}$ on $Y$ for one $t\in(0,1)$.
		
		\emph{The integral case.} Put $a=z(X)/q$ and $b=z(Y)/q$. Every edge meets a vertex
		of weight one, so $G\subseteq J_q(aq,bq)$, and \eqref{eq:zsum} gives $a\ge b$ and
		$2a+b\le1$. By Lemma~\ref{lem:blocks}(ii),
		\begin{equation}\label{eq:intcase}
			\frac{\rho(G)^2}{q^2}\ \le\ \frac{2a+b}2-\frac{3b}{16(1+b)}\ \le\ \frac12 ,
		\end{equation}
		with equality throughout only for $a=1/2$ and $b=0$.
		
		\emph{The fractional case.} Let the normalized sizes of the weight groups $1,t,0$
		in $X$ be $a,p,1-a-p$ and those of $1,1-t,0$ in $Y$ be $b,r,1-b-r$, so that
		\begin{equation}\label{eq:means}
			a+pt=b+r(1-t)=s\le\tfrac13 .
		\end{equation}
		The cover inequalities place $G$ inside the template whose normalized matrix is
		\begin{equation}\label{eq:Cmat}
			C(a,p,b,r)=
			\begin{pmatrix}
				\sqrt{ab} & \sqrt{ar} & \sqrt{a(1-b-r)}\\
				\sqrt{pb} & \sqrt{pr} & 0\\
				\sqrt{(1-a-p)b} & 0 & 0
			\end{pmatrix},
		\end{equation}
		a block of ones between groups of normalized masses $u$ and $v$ contributing the
		entry $\sqrt{uv}$; vectors constant on groups account for all nonzero singular
		values, so the template has spectral radius $q\|C\|$.
		
		We may raise the common mean in \eqref{eq:means} to $1/3$. Put
		$\theta=(1/3-s)/(1-s)$ and split off the proportion $\theta$ of every weight group
		on both sides, raising its weight to one. Splitting a group into two copies leaves
		the norm unchanged, since the original coordinate embeds isometrically with
		coefficients the square roots of the relative masses and the complement is
		annihilated; raising weights only adds allowed pairs, and increasing a nonnegative
		matrix entrywise cannot decrease its operator norm. The new parameters are
		$a'=\theta+(1-\theta)a$, $p'=(1-\theta)p$, $b'=\theta+(1-\theta)b$,
		$r'=(1-\theta)r$, with both means equal to $1/3$. It therefore suffices to treat
		$a+pt=b+r(1-t)=1/3$. Transposing exchanges $t$ and $1-t$, so assume
		$0<t\le\tfrac12$. Then $p=(1/3-a)/t$ and $r=(1/3-b)/(1-t)$, and nonnegativity of
		the six masses amounts to
		\begin{equation}\label{eq:rect}
			L_t\le a\le\tfrac13,\qquad 0\le b\le\tfrac13,\qquad
			L_t=\max\Bigl\{0,\frac{1/3-t}{1-t}\Bigr\},
		\end{equation}
		the condition $1-b-r\ge0$ reducing to $2/3-t+tb\ge0$, which holds for $t\le1/2$.
		
		For fixed $t$ the function $\|C\|^2$ is separately convex in $a$ and in $b$ on the
		rectangle \eqref{eq:rect}. Indeed, writing
		$C=\mathrm{diag}(\sqrt a,\sqrt p,\sqrt{1-a-p})\,K\,
		\mathrm{diag}(\sqrt b,\sqrt r,\sqrt{1-b-r})$ with $K$ the matrix
		whose entries are $1$ on and above the antidiagonal and $0$ below it, the matrix $C^{\top}C$ is affine in $a$
		for fixed $b$, because the row masses are; and the largest eigenvalue of a
		symmetric matrix is convex in an affine parameter. The same argument applied to
		$CC^{\top}$ gives convexity in $b$. Applying the two inequalities in turn bounds
		$\|C\|^2$ by the bilinear interpolation of its four corner values, so the maximum
		is at a corner.
		
		If $\tfrac13\le t\le\tfrac12$, then $L_t=0$ and the corner values are
		\begin{equation}\label{eq:corners}
			\|C\|^2=\frac1{9t(1-t)},\ \frac13,\ \frac13,\ \frac49
			\qquad\text{at}\quad (a,b)=(0,0),\ (\tfrac13,0),\ (0,\tfrac13),\ (\tfrac13,\tfrac13).
		\end{equation}
		At $(0,0)$ the only nonzero block joins the two fractional groups, of masses
		$1/(3t)$ and $1/(3(1-t))$; at $(\tfrac13,0)$ there is only a full row block of mass
		$1/3$, and the transpose at $(0,\tfrac13)$; at $(\tfrac13,\tfrac13)$ the template is
		$J_q(q/3,q/3)$, of squared norm $4/9$ by Lemma~\ref{lem:blocks}(i). Since
		$t(1-t)\ge2/9$ on this interval, every corner value is at most $1/2$, attained only
		at $(0,0)$ with $t=1/3$; and the interpolation puts positive weight on another
		corner as soon as $a>0$ or $b>0$. So equality forces
		\begin{equation}\label{eq:eqcase}
			t=\tfrac13,\qquad a=b=0,\qquad p=1,\qquad r=\tfrac12 .
		\end{equation}
		If $0<t<\tfrac13$, put $r_0=1/(3(1-t))$, so that $\tfrac13<r_0<\tfrac12$ and
		$L_t=1-2r_0$. At $(L_t,0)$ the $X$ side has no zero-weight group, so the template
		is $J_q$ with active proportions $L_t$ and $r_0$, which after transposing satisfy
		$r_0\ge L_t>0$ and $2r_0+L_t=1$; Lemma~\ref{lem:blocks}(ii), with its strictly
		positive deficit, gives squared norm strictly below $1/2$. At $(L_t,\tfrac13)$ the
		template is contained in $J_q(q/3,q/3)$ and has squared norm at most $4/9$, and the
		corners $(\tfrac13,0)$ and $(\tfrac13,\tfrac13)$ give $1/3$ and $4/9$. Transposition
		covers $\tfrac12<t<1$, with the equality case $t=2/3$, $a=b=0$, $p=1/2$, $r=1$.
		
		Returning to a mean $s\le1/3$: if the original template has norm $1/\sqrt2$, so does
		the enlarged one, whose equality cases have $a'=b'=0$; since $a',b'\ge\theta$ this
		forces $\theta=0$ and $s=1/3$. Every equality template is thus a complete rectangle
		with one side of normalized size $1/2$ and the other of size $1$.
		
		\emph{Equality.} If $\rho(G)=q/\sqrt2$, its containing template is such a rectangle
		and has exactly $q^2/2$ edges, so $q^2/2=\rho(G)^2\le e(G)\le q^2/2$ by
		Lemma~\ref{lem:basic}(ii); no template edge is missing and the half-sized group is
		an actual vertex set, so $q$ is even. Conversely either orientation admits a cover
		satisfying \eqref{eq:asym}: weight $1$ on $S$ and $0$ elsewhere for $S\times Y$, and
		weight $1/3$ on $X$, $2/3$ on $S$ and $0$ on $Y\setminus S$ for $X\times S$.
		
		\emph{Stability.} Suppose \eqref{eq:bipstab} fails for some $\varepsilon>0$. Then
		there are $G_j$ with class size $q_j$ satisfying \eqref{eq:asym} and
		$\rho(G_j)/q_j\to1/\sqrt2$, for which no rectangle works. Take the extreme cover
		above and its template $T_j$, and pass to a subsequence in which all covers fall in
		the same alternative. In the integral case the parameters lie in the compact set
		$\{(a,b):a\ge b\ge0,\ 2a+b\le1\}$; along a convergent subsequence
		\eqref{eq:intcase} forces the limit $(1/2,0)$, and with $S_j$ the weight-one subset
		of the first class, $|S_j|/q_j\to1/2$ and
		$|E(T_j)\triangle(S_j\times Y_j)|=o(q_j^2)$, the only extra template edges meeting
		the weight-one subset of the second class, of size $o(q_j)$. In the fractional case
		the vector $(t,a,p,b,r)$ lies in the compact set cut out by $0\le t\le1$,
		$a,p,b,r\ge0$, $a+p\le1$, $b+r\le1$ and $a+pt=b+r(1-t)\le1/3$, and
		\eqref{eq:Cmat} is continuous there; a convergent subsequence has limiting norm
		$1/\sqrt2$. The limit cannot have $t=0$, since then $a=b+r=s\le1/3$ and the
		template lies in $J_q$ with active proportions $s,s$, of norm at most $2/3$; nor
		$t=1$, by the same bound. The limit therefore has $0<t<1$.
		By the equality characterization above, after interchanging
		the two classes throughout the subsequence if necessary,
		we have $t=1/3$, $a=b=0$, $p=1$ and $r=1/2$.
		This interchange preserves the relevant constraints because
		the two class sums of the chosen extreme cover are equal. With $S_j$ the
		fractional group in the second class, of normalized size $r_j\to1/2$, the blocks
		meeting weight-one groups contribute at most $(a_j+b_j)q_j^2=o(q_j^2)$ extra edges
		and the missing part of $X_j\times S_j$ has at most $(1-p_j)r_jq_j^2=o(q_j^2)$
		edges. In either alternative there is a rectangle $R_j$ with
		$|E(T_j)\triangle R_j|=o(q_j^2)$ and $e(T_j)=q_j^2/2+o(q_j^2)$, and
		$0\le e(T_j)-e(G_j)\le e(T_j)-\rho(G_j)^2=o(q_j^2)$ by containment and
		Lemma~\ref{lem:basic}(ii). Hence $|E(G_j)\triangle R_j|=o(q_j^2)$, a contradiction.
	\end{proof}
	
	The partite stability theorem follows by applying Lemma~\ref{lem:bipineq} to the
	link of a vertex of cover weight zero in the heaviest class and then propagating the
	conclusion from that link to the whole hypergraph.
	
	\begin{thm}\label{thm:target}
		For every $\varepsilon>0$ there are $\delta>0$ and $q_0$ such that the following
		holds for $q\ge q_0$. Let $H$ be a $q$-balanced $3$-partite $3$-graph with
		$\sigma(H)\ge(\tfrac1{\sqrt2}-\delta)q$ and with no perfect fractional matching,
		and let $g$ be any minimum fractional vertex cover. Then the classes can be
		labelled so that there are $A_1\subseteq V_1$, $A_2\subseteq V_2$ and
		$D\subseteq V_3$ with
		\[
		\bigl||A_i|-q/2\bigr|\le\varepsilon q\ \ (i=1,2),\qquad |D|\le\varepsilon q,
		\]
		\[
		\sum_{v\in V_1}\bigl|g(v)-\mathbf1_{A_1}(v)\bigr|
		+\sum_{v\in V_2}\bigl|g(v)-\mathbf1_{A_2}(v)\bigr|
		+\sum_{v\in V_3}g(v)\ \le\ \varepsilon q ,
		\]
		and every edge of $H$ meets $A_1\cup A_2\cup D$. In particular every edge of $H-D$
		meets $A_1\cup A_2$.
	\end{thm}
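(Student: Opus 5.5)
The plan is to find a zero-weight vertex in the heaviest class, apply the stability part of Lemma~\ref{lem:bipineq} to its link, and then read the structure of $g$ off the resulting near-rectangle by weight counting.

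\emph{Setup.} Since $H$ has no perfect fractional matching, Lemma~\ref{lem:basic}(i) gives $g(V(H))=\tau^*(H)=\nu^*(H)<q$. Every class $V_i$ contains a vertex with $g=0$. Indeed, if $g>0$ on all of $V_i$, complementary slackness saturates every vertex of $V_i$ under a maximum fractional matching $f$. Each edge meets $V_i$ exactly once, so $f$ would have size $q$ and be perfect, a contradiction.

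Label the classes so that $g(V_1)\ge g(V_2)\ge g(V_3)$. Pick $v\in V_1$ with $g(v)=0$. Then $g$ restricted to $V_2\cup V_3$ is a fractional cover of the bipartite link $L_H(v)$, and
\[
2g(V_2)+g(V_3)\le g(V_1)+g(V_2)+g(V_3)<q .
\]
Hence \eqref{eq:asym} holds with $X=V_2$, $Y=V_3$. Since $\rho(L_H(v))\ge(1/\sqrt2-\delta)q$, the stability part of Lemma~\ref{lem:bipineq} yields a near-rectangle. I expect (and will have to prove) that the correct orientation is $L_H(v)\approx S\times V_3$ with $S\subseteq V_2$ and $|S|\approx q/2$. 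This matches the barrier, where a vertex of $V_1\setminus A_1$ has link $A_2\times V_3$. Finally I will relabel $V_1,V_2$ so that the statement's $A_1,A_2$ come out in the right classes.

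\emph{Weight counting.} Sum the cover inequalities $g(x)+g(y)\ge1$ over the $q^2/2-o(q^2)$ link edges in $S\times V_3$:
\[
q\,g(S)+|S|\,g(V_3)\ \ge\ q^2/2-o(q^2),
\]
so $2g(V_2)+g(V_3)\ge q-o(q)$. Combined with the display above, this forces $g(V_1)=g(V_2)+o(q)$ and $g(V_2\setminus S)=o(q)$. The main obstacle is to show that $g(V_3)=o(q)$ and that $g$ is close to $1$ on $S$. Counting alone does not exclude fractional profiles such as weight $2/3$ on $S$ and $1/3$ on $V_3$; this is exactly the second equality template in Lemma~\ref{lem:bipineq}. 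To rule it out I will use a second zero-weight vertex $w\in V_2$, which exists by the setup. Its link on $V_1,V_3$ also has spectral radius at least $(1/\sqrt2-\delta)q$, and $g(V_1)\ge g(V_3)$ together with $2g(V_1)+g(V_3)<q$ again gives \eqref{eq:asym}. Applying the lemma and the same counting gives $2g(V_1)+g(V_3)\ge q-o(q)$. Adding this to the total bound $g(V_1)+g(V_2)+g(V_3)<q$ and using $g(V_1)\approx g(V_2)$ should pin down $g(V_1),g(V_2)=q/2-o(q)$ and $g(V_3)=o(q)$; this calculation must be checked. I will also use the wrong-orientation alternative of the lemma for each link and eliminate it by comparing class weights.

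Once $g(V_3)=o(q)$, near-tightness of the summed inequality forces $g(x)\ge1-o(1)$ for all but $o(q)$ vertices $x\in S$. Indeed, most $y\in V_3$ have $g(y)=o(1)$, and each such $x$ has almost all of $V_3$ as link neighbours.

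\emph{Conclusion.} Define $A_i=\{x\in V_i: g(x)\ge1/3\}$ for $i=1,2$, and $D=\{z\in V_3: g(z)\ge1/3\}$. An edge avoiding $A_1\cup A_2\cup D$ would have cover sum less than $1$, so every edge meets $A_1\cup A_2\cup D$. We get $|D|\le3g(V_3)\le\varepsilon q$. The near-integrality of $g$ on $V_1$ and $V_2$ gives $|A_i|=q/2\pm\varepsilon q$ and the stated $\ell_1$ bound, once $\delta$ is small and $q_0$ is large. The parallel argument in $V_1$ comes from the link of $w$.
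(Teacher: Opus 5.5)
Your setup and first weight count match the paper's: from $L_H(v)\approx S\times V_3$ you correctly get $g(V_1)=g(V_2)+o(q)$, $g(V_2\setminus S)=o(q)$ and $2g(V_2)+g(V_3)=q-o(q)$. The gap is in the step meant to force $g(V_3)=o(q)$, and that step fails as proposed.

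\textbf{Why the second link gives nothing new.} Since $g(V_1)=g(V_2)+o(q)$, the inequality $2g(V_1)+g(V_3)\ge q-o(q)$ that you extract from the link of $w\in V_2$ merely restates what you already have. The planned calculation therefore cannot pin anything down. Indeed, consider the whole family
\[
g\approx a\mathbf 1_R \text{ on } V_1,\qquad g\approx a\mathbf 1_S \text{ on } V_2,\qquad g\approx 1-a \text{ on } V_3,\qquad a\in[\tfrac23,1].
\]
Every member satisfies all constraints coming from links of vertices in $V_1$ and $V_2$, because the rectangles $S\times V_3$ and $R\times V_3$ are covered with weight sum exactly $a+(1-a)=1$. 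Links of $V_1$- and $V_2$-vertices therefore cannot distinguish $a<1$ from $a=1$.

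Separately, $2g(V_1)+g(V_3)<q$ does not follow from your ordering. The relation $g(V_1)\ge g(V_2)$ gives only $2g(V_1)+g(V_3)\le q+o(q)$, so Lemma~\ref{lem:bipineq} as stated does not apply to $L_H(w)$.

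\textbf{The missing idea.} You need the zero-weight vertex $u_3\in V_3$, whose link lives on $V_1\times V_2$, and you need it after recovering individual weights rather than class sums. The paper proceeds in three steps.
\begin{enumerate}
\item It averages $|g(y)+g(z)-1|$ over $S\times V_3$. This gives $\sum_{y\in V_2}|g(y)-a\mathbf 1_S(y)|=o(q)$ and $\sum_{z\in V_3}|g(z)-(1-a)|=o(q)$.
\item It applies Lemma~\ref{lem:recover} to the link of the zero-weight vertex in $V_2$, which yields $R\subseteq V_1$ with $g\approx a\mathbf 1_R$.
\item If $a<1$, every pair of $L_H(u_3)$ outside $R\times S$ with unexceptional ends has weight at most $(1+a)/2<1$. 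Hence $e(L_H(u_3))\le q^2/4+o(q^2)$, contradicting $e\ge\rho^2\ge(\tfrac12-o(1))q^2$.
\end{enumerate}

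\textbf{The other orientation.} The same problem affects your plan for the case $S\subseteq V_3$. There all three class weights tend to $q/3$, which is perfectly consistent, so comparing class weights cannot eliminate it. The paper instead recovers $g\approx\tfrac13$ on $V_2$ and $g\approx\tfrac23\mathbf 1_S$ on $V_3$. It then applies Lemma~\ref{lem:recover} to $L_H(u_3)$ to get $g\approx\tfrac23\mathbf 1_R$ on $V_1$. Finally it shows that the link of the zero-weight vertex of $V_2$ has only about $q^2/4$ edges, since pairs outside $R\times S$ have weight at most $\tfrac23+\tfrac16<1$.
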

	
	The proof needs one compactness statement, which recovers individual cover weights
	from their means.
	
	\begin{lem}\label{lem:recover}
		Let $q_j\to\infty$, let $G_j$ be bipartite with classes $U_j,W_j$ of size $q_j$, and
		let $f_j:U_j\to[0,1]$ and $h_j:W_j\to[0,1]$ satisfy $f_j(u)+h_j(w)\ge1$ for every
		$uw\in E(G_j)$. Fix $a>0$. If $\rho(G_j)\ge(\tfrac1{\sqrt2}-o(1))q_j$,
		$\sum_{u}f_j(u)=(\tfrac a2+o(1))q_j$ and
		$\sum_{w}|h_j(w)-(1-a)|=o(q_j)$, then there are $R_j\subseteq U_j$ with
		$|R_j|=(\tfrac12+o(1))q_j$ and $\sum_u|f_j(u)-a\mathbf1_{R_j}(u)|=o(q_j)$.
	\end{lem}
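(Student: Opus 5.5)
The plan is a direct counting argument: the near-constant weights on $W_j$ force most edges to land on vertices of $U_j$ with $f_j\ge a-\eta$, and the spectral lower bound then makes that set have about $q_j/2$ vertices. Fix a small $\eta\in(0,a)$; at the end I let $\eta=\eta_j\to0$ slowly. Set
\[
W_j^{\mathrm{bad}}=\{w\in W_j:|h_j(w)-(1-a)|>\eta\},\qquad R_j=\{u\in U_j:f_j(u)\ge a-\eta\}.
\]
Since $\sum_w|h_j(w)-(1-a)|=o(q_j)$, we get $|W_j^{\mathrm{bad}}|\le o(q_j)/\eta$.

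\emph{Step 1: edges.} Take an edge $uw$ with $w\notin W_j^{\mathrm{bad}}$. The cover condition gives $f_j(u)\ge1-h_j(w)\ge a-\eta$, so $u\in R_j$. Hence every edge meets $R_j$ or $W_j^{\mathrm{bad}}$, and
\[
e(G_j)\le q_j|R_j|+q_j|W_j^{\mathrm{bad}}|.
\]
Lemma~\ref{lem:basic}(ii) gives $e(G_j)\ge\rho(G_j)^2\ge(\tfrac12-o(1))q_j^2$. Therefore $|R_j|\ge(\tfrac12-o(1))q_j-o(q_j)/\eta$.

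\emph{Step 2: weights.} Write $\Sigma_j=\sum_{u\in R_j}\bigl(f_j(u)-(a-\eta)\bigr)+\sum_{u\notin R_j}f_j(u)$, a sum of nonnegative terms. Then
\[
(\tfrac a2+o(1))q_j=\sum_u f_j(u)=(a-\eta)|R_j|+\Sigma_j .
\]
Inserting the lower bound on $|R_j|$ from Step 1 gives $\Sigma_j\le\tfrac{\eta}{2}q_j+o(q_j)+o(q_j)/\eta$. Also $|R_j|\le(\tfrac a2+o(1))q_j/(a-\eta)=(\tfrac12+O(\eta/a)+o(1))q_j$, and here $a>0$ being fixed is used. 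Then
\[
\sum_u\bigl|f_j(u)-a\mathbf1_{R_j}(u)\bigr|\le\Sigma_j+2\eta|R_j|=O(\eta)q_j+o(q_j)+o(q_j)/\eta ,
\]
since on $R_j$ we have $|f_j-a|\le(f_j-(a-\eta))+\eta$.

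\emph{Step 3: choice of $\eta$.} Choose $\eta_j\to0$ slowly enough that $o(q_j)/\eta_j$ is still $o(q_j)$. This is possible because the $o(q_j)$ terms are fixed sequences. With this choice, $|R_j|=(\tfrac12+o(1))q_j$ and $\sum_u|f_j(u)-a\mathbf1_{R_j}(u)|=o(q_j)$, as required. The only delicate point is this diagonal choice of $\eta_j$ together with the division by $a-\eta$; the rest is bookkeeping.
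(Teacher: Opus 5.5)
Your proposal is correct and is essentially the paper's argument. The paper also takes $R_\varepsilon=\{u:f(u)\ge a-\varepsilon\}$ and bounds an exceptional set of $w$ with large $h(w)$ by $o(q)/\varepsilon$. It uses $e(G)\ge\rho(G)^2$ to get $|R_\varepsilon|\ge(\tfrac12-o(1))q$ and bounds $|R_\varepsilon|$ from above through $\sum_uf(u)\ge(a-\varepsilon)|R_\varepsilon|$, obtaining an $\ell^1$ error of order $\varepsilon q+o(q)$. It then lets $\varepsilon\to0$ diagonally, through thresholds $J_i$.

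Your direct choice of $\eta_j$, for instance $\eta_j=\max\{\sqrt{\epsilon_j},1/j\}$ with $\epsilon_j=q_j^{-1}\sum_w|h_j(w)-(1-a)|$, is a legitimate and slightly more explicit version of that diagonalization. It works because the only $\eta$-dependent error term is $\epsilon_jq_j/\eta$, with $\epsilon_j$ independent of $\eta$.
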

	
	\begin{proof}
		We suppress $j$ and write $q=q_j$, all asymptotic statements referring to the
		sequence. Fix $0<\varepsilon\le a/2$ and put
		\[
		R_\varepsilon=\{u\in U:f(u)\ge a-\varepsilon\},\qquad
		W_\varepsilon=\{w\in W:h(w)>1-a+\varepsilon\} .
		\]
		Since $\sum_w|h(w)-(1-a)|=o(q)$, we have $|W_\varepsilon|\le o(q)/\varepsilon=o(q)$
		for fixed $\varepsilon$. If $u\notin R_\varepsilon$ and $uw\in E(G)$, then
		$h(w)\ge1-f(u)>1-a+\varepsilon$ and $w\in W_\varepsilon$; hence every vertex outside
		$R_\varepsilon$ has degree at most $|W_\varepsilon|$, and
		$e(G)\le|R_\varepsilon|q+q|W_\varepsilon|=|R_\varepsilon|q+o(q^2)$. As
		$e(G)\ge\rho(G)^2\ge(\tfrac12-o(1))q^2$ by Lemma~\ref{lem:basic}(ii), this gives
		$|R_\varepsilon|\ge(\tfrac12-o(1))q$. In the other direction
		$(\tfrac a2+o(1))q=\sum_uf(u)\ge(a-\varepsilon)|R_\varepsilon|$, and
		$\tfrac a{2(a-\varepsilon)}\le\tfrac12+\tfrac\varepsilon a$, so
		\begin{equation}\label{eq:Rsize}
			\bigl||R_\varepsilon|-q/2\bigr|\ \le\ \Bigl(\frac\varepsilon a+o(1)\Bigr)q .
		\end{equation}
		Using $|R_\varepsilon|\ge(\tfrac12-o(1))q$ in the first estimate and
		\eqref{eq:Rsize} in the second,
		\begin{align*}
		\sum_{u\notin R_\varepsilon}f(u)&\le\sum_uf(u)-(a-\varepsilon)|R_\varepsilon|
		\le\Bigl(\frac\varepsilon2+o(1)\Bigr)q,\\
		\sum_{u\in R_\varepsilon}\bigl(f(u)-a\bigr)&\le\sum_uf(u)-a|R_\varepsilon|
		\le(\varepsilon+o(1))q ,
		\end{align*}
		while $\sum_{u\in R_\varepsilon}(a-f(u))_+\le\varepsilon|R_\varepsilon|\le\varepsilon q$.
		Since $|x|=x+2(-x)_+$,
		\begin{equation}\label{eq:Rell1}
			\sum_u\bigl|f(u)-a\mathbf1_{R_\varepsilon}(u)\bigr|
			=\sum_{u\in R_\varepsilon}|f(u)-a|+\sum_{u\notin R_\varepsilon}f(u)
			\ \le\ \Bigl(\frac{7\varepsilon}2+o(1)\Bigr)q .
		\end{equation}
		Finally let $\varepsilon$ tend to zero along the sequence: for each integer
		$i\ge2/a$ choose $J_i$ so that the error terms in \eqref{eq:Rsize} and
		\eqref{eq:Rell1} with $\varepsilon=1/i$ are at most $q_j/i$ for $j\ge J_i$, with
		$J_2<J_3<\cdots$, and put $R_j=R_{1/i}$ for $J_i\le j<J_{i+1}$. Then
		$|R_j|=(\tfrac12+o(1))q_j$ and $\sum_u|f_j(u)-a\mathbf1_{R_j}(u)|=o(q_j)$.
	\end{proof}
	
	\begin{proof}[Proof of Theorem~\ref{thm:target}]
		Suppose the assertion fails for some $\varepsilon>0$. For each $j$ there is a
		counterexample $H$ with class size $q\ge j$ and
		$\sigma(H)\ge(\tfrac1{\sqrt2}-\tfrac1j)q$, together with a minimum cover $g$
		witnessing the failure; we suppress $j$, and every asymptotic statement refers to
		this sequence, passing to subsequences freely.
		
		Let $f$ be a maximum fractional matching, and put $d_f(v)=\sum_{e\ni v}f(e)$ for each vertex $v$. Writing
		\[
		0=\sum_vg(v)-\sum_ef(e)
		=\sum_vg(v)\bigl(1-d_f(v)\bigr)+\sum_ef(e)\Bigl(\sum_{v\in e}g(v)-1\Bigr)
		\]
		exhibits zero as a sum of nonnegative terms, and in each class
		$\sum_{v\in V_i}(1-d_f(v))=q-\nu^*(H)>0$, so some $u_i\in V_i$ has $d_f(u_i)<1$ and
		hence $g(u_i)=0$. Thus the restriction of $g$ to the other two classes covers
		$L_H(u_i)$, for each $i$; we keep this cover throughout. Relabel so that
		$m_i=q^{-1}g(V_i)$ satisfies $m_1\ge m_2\ge m_3$, whence $2m_2+m_3<1$.
		Lemma~\ref{lem:bipineq} applied to $L_H(u_1)$ shows that this link differs in
		$o(q^2)$ edges from a complete bipartite graph with one class of size
		$(\tfrac12+o(1))q$ and the other equal to $V_2$ or to $V_3$; pass to a subsequence
		in which the orientation is fixed.
		
		\emph{The half-sized class lies in $V_2$.} Let $S\subseteq V_2$ with
		$|S|=(\tfrac12+o(1))q$ be such that $L_H(u_1)$ differs from $K_{S,V_3}$ in $o(q^2)$
		edges, and put
		$p=|S|/q$ and $A=q^{-1}g(S)$. Summing the cover inequality over the edges of the
		link inside $S\times V_3$ and using $0\le g\le1$ gives $A+pm_3\ge p-o(1)$, so
		\begin{equation}\label{eq:squeeze}
			1-o(1)\ \le\ 2A+m_3\ \le\ 2m_2+m_3\ \le\ m_1+m_2+m_3\ <\ 1 .
		\end{equation}
		Consecutive differences therefore vanish, so $m_2-A=o(1)$, $m_1-m_2=o(1)$ and
		$m_1+m_2+m_3=1-o(1)$; after a further subsequence there is $a\in[\tfrac23,1]$ with
		$m_1,m_2\to a/2$ and $m_3\to1-a$, the bounds on $a$ coming from $m_2\ge m_3\ge0$.
		
		To pass from means to individual weights, choose $y\in S$ and $z\in V_3$
		independently and uniformly and put $F(y,z)=g(y)+g(z)-1$. Its negative part has
		expectation $o(1)$, since $F\ge0$ on link edges and the proportion of missing pairs
		is $o(1)$, while $\mathbb EF=A/p+m_3-1=o(1)$; so $\mathbb E|F|=o(1)$ by
		$|F|=F+2\max\{-F,0\}$. Averaging over $z$ first gives
		$|g(y)+m_3-1|\le\mathbb E_z|F(y,z)|$ for each $y\in S$, and then averaging over $y$,
		together with $m_3\to1-a$, gives $\sum_{y\in S}|g(y)-a|=o(q)$; reversing the roles
		gives $\sum_{z\in V_3}|g(z)-(1-a)|=o(q)$. Since $m_2-A=o(1)$, the weight on
		$V_2\setminus S$ is $o(q)$, so
		\begin{equation}\label{eq:V2V3}
			\sum_{y\in V_2}\bigl|g(y)-a\mathbf1_S(y)\bigr|=o(q),\qquad
			\sum_{z\in V_3}\bigl|g(z)-(1-a)\bigr|=o(q).
		\end{equation}
		Applying Lemma~\ref{lem:recover} to $L_H(u_2)$ with $U=V_1$ and $W=V_3$ gives
		$R\subseteq V_1$ with $|R|=(\tfrac12+o(1))q$ and
		$\sum_{x\in V_1}|g(x)-a\mathbf1_R(x)|=o(q)$.
		
		We claim $a=1$. If not, put $b=(1-a)/4>0$; all but $o(q)$ vertices of $V_1\cup V_2$
		are within $b$ of their target weights, and for a pair $(x,y)\notin R\times S$ with
		both ends unexceptional one target is zero and the other at most $a$, so
		$g(x)+g(y)\le a+2b=(1+a)/2<1$ and the pair is not an edge of $L_H(u_3)$. Hence
		$e(L_H(u_3))\le|R||S|+o(q^2)=(\tfrac14+o(1))q^2$, contradicting
		$e(L_H(u_3))\ge\rho(L_H(u_3))^2\ge(\tfrac12-o(1))q^2$. So $a=1$ and
		\begin{equation}\label{eq:ell1}
			\sum_{x\in V_1}\bigl|g(x)-\mathbf1_R(x)\bigr|
			+\sum_{y\in V_2}\bigl|g(y)-\mathbf1_S(y)\bigr|+\sum_{z\in V_3}g(z)=o(q).
		\end{equation}
		
		\emph{The half-sized class lies in $V_3$.} Now $S\subseteq V_3$ has size
		$(\tfrac12+o(1))q$ and $L_H(u_1)$ differs from $K_{V_2,S}$ in $o(q^2)$ edges. With
		$p=|S|/q$ and $C=q^{-1}g(S)$, summing the cover inequalities over the rectangle
		gives $pm_2+C\ge p-o(1)$, so
		$1-o(1)\le m_2+2C\le m_2+2m_3\le2m_2+m_3\le m_1+m_2+m_3<1$; hence
		$m_1,m_2,m_3\to\tfrac13$ and $m_3-C=o(1)$. The averaging argument on $V_2\times S$
		gives $\sum_{y\in V_2}|g(y)-\tfrac13|=o(q)$ and
		$\sum_{z\in V_3}|g(z)-\tfrac23\mathbf1_S(z)|=o(q)$, and Lemma~\ref{lem:recover}
		applied to $L_H(u_3)$ with $U=V_1$, $W=V_2$ and $a=\tfrac23$ gives $R\subseteq V_1$
		of size $(\tfrac12+o(1))q$ with $\sum_{x}|g(x)-\tfrac23\mathbf1_R(x)|=o(q)$. In
		$L_H(u_2)$ a pair outside $R\times S$ whose weights deviate from these targets by at
		most $\tfrac1{12}$ has weight sum at most $\tfrac23+\tfrac16<1$ and is not an edge,
		and there are only $o(q)$ exceptional vertices, so
		$e(L_H(u_2))\le(\tfrac14+o(1))q^2$, again contradicting the spectral lower bound.
		This orientation is impossible.
		
		Finally set $A_i=\{v\in V_i:g(v)\ge\tfrac13\}$ for $i=1,2$ and
		$D=\{v\in V_3:g(v)\ge\tfrac13\}$. For $T=R$ in $V_1$ or $T=S$ in $V_2$, a vertex of
		$A_i\setminus T$ contributes at least $\tfrac13$ to
		$\sum_{v\in V_i}|g(v)-\mathbf1_T(v)|$ and a vertex of $T\setminus A_i$ more than
		$\tfrac23$, so $|A_1\triangle R|=o(q)$ and $|A_2\triangle S|=o(q)$; and
		$|D|\le3g(V_3)=o(q)$. With \eqref{eq:ell1} this gives the two displayed estimates
		for large members of the sequence. An edge avoiding $A_1\cup A_2\cup D$ would have
		all three weights below $\tfrac13$, contradicting the cover inequality. This
		contradicts the choice of counterexamples.
	\end{proof}

	\medskip\noindent\emph{The non-partite case.}
	For a ground set $V$ and $A\subseteq V$ put
	$T_i(A)=\{e\in\binom V3:|e\cap A|\ge i\}$ for $i\in\{1,2\}$. If $|V|=3m$ and
	$|A|=im-1$, then no matching in $T_i(A)$ has $m$ edges; these are the two space
	barriers.
	
	\begin{thm}\label{thm:fsnon}
		For every $\beta>0$ there are $\delta\in(0,10^{-4}]$ and an integer $N_0$ such that
		the following holds. If $F$ is a $3$-graph on $N\ge N_0$ vertices with
		$\sigma(F)\ge(\tfrac23-\delta)N$ and $F$ has no perfect fractional matching, then
		there are $i\in\{1,2\}$ and $A,D\subseteq V(F)$ with
		\[
		\Bigl||A|-\tfrac{iN}3\Bigr|\le\beta N,\qquad|D|\le\beta N,\qquad
		E(F-D)\subseteq T_i(A).
		\]
	\end{thm}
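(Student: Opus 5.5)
I will argue as in the proof of Theorem~\ref{thm:target}: by contradiction along a sequence of counterexamples $F$ with $\sigma(F)\ge(\tfrac23-o(1))N$, with every asymptotic statement referring to the sequence.

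\emph{Finding a zero-weight vertex.} Fix a maximum fractional matching $f$ and a minimum fractional cover $g$, so $g(V)=\nu^*(F)<N/3$. The complementary slackness identity of Lemma~\ref{lem:basic}(i) gives a vertex $u$ with $d_f(u)<1$, hence $g(u)=0$. Then $g$ restricted to $V\setminus\{u\}$ is a fractional vertex cover of the graph $L_F(u)$: every link edge $xy$ has $g(x)+g(y)\ge1$, and the total weight is $m:=g(V)/N<1/3$.

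\emph{A non-partite analogue of Lemma~\ref{lem:bipineq}.} The key inequality to prove is: if a graph $G$ on $N$ vertices has a fractional cover of weight at most $N/3$, then $\rho(G)\le 2N/3+O(1)$, with stability. To do this I take an extreme point of the graph fractional cover polytope, which is half-integral. Let $P$ be the set of vertices of weight $1$ and $Q$ the set of weight $\tfrac12$, so $|P|+|Q|/2\le N/3$. The edges of $G$ then lie inside the template ``$K_{|P|}\vee I$ together with a clique on $Q$''. Normalizing $x=|P|/N$ and $y=|Q|/N$, the spectral radius of this template is governed by a $3\times3$ quotient matrix. I will maximize it under $x+y/2\le1/3$, expecting the maximum $2/3$ only at $(x,y)=(1/3,0)$, where the template is $S_{N,N/3}$, and at $(0,2/3)$, where it is a clique on $2N/3$ vertices. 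Both have radius about $2N/3$; this corner check is the main computational obstacle. Convexity in $x$ for fixed $y$ works as in Lemma~\ref{lem:bipineq}, through the eigenvalue of a matrix affine in the masses, so the check reduces to corner values. A compactness argument then gives stability: $L_F(u)$ is $o(N^2)$-close either to $S_{N,N/3}$, which is the $T_1$ type, or to $K_{2N/3}$ plus isolated vertices, which is the $T_2$ type. Lemma~\ref{lem:missing} with Lemma~\ref{lem:blocks}(iii) quantifies the missing edges in the first case.

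\emph{Propagating from the link to $F$.} Consider first the $S_{N,N/3}$ case. I will show that $g\approx\mathbf 1_A$ in $\ell_1$ for a set $A$ with $|A|\approx N/3$. The inequality $g(A)\ge(1-o(1))|A|$ comes from averaging the cover inequality over link edges inside $A\times(V\setminus A)$, exactly as the random-pair averaging argument in Theorem~\ref{thm:target}. Since $m<1/3$, the remaining weight is $o(N)$. In the clique case the same averaging gives $g\approx\tfrac12\mathbf 1_A$ with $|A|\approx2N/3$.

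To finish, set $D=\{v:g(v)\ge c\}\setminus A$ for a small constant $c$, and in the first case also remove from $A$ the vertices with $g<1-c$. Then $|D|=o(N)$. Any edge of $F-D$ missing $T_i(A)$ has weight sum below $1$: in the first case all three vertices lie outside $A$, each with weight below $c$; in the second case at most one vertex lies in $A$, giving weight at most $\tfrac12+2c$. This is a contradiction.

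The main obstacle is the second case. There the weights on $A$ are near $\tfrac12$, so I must check that vertices of $A$ with weight noticeably above $\tfrac12$ are few. I will handle this through the total mass bound $g(A)\le m N<N/3$ combined with the lower bound from the averaging argument, which forces $\sum_A|g-\tfrac12|=o(N)$. I then place the exceptional vertices into $D$.
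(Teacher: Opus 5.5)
Your overall route is the paper's (Lemma~\ref{lem:dichot}). The complementary-slackness zero vertex and the half-integral optimal cover of its link give the same template $K_a\vee(K_b\cup I_c)$ with $2a+b<\tfrac{2N}3$ that the paper gets by rescaling and random threshold rounding. Replacing explicit constants by compactness, using Lemmas~\ref{lem:missing} and~\ref{lem:blocks}(iii) near $(\tfrac13,0)$, and averaging to reach $T_1(A)$ or $T_2(A)$ are all sound.

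The gap is in the step you call the main obstacle: showing $r(x,y)\le\tfrac23$ with equality only at $(\tfrac13,0)$ and $(0,\tfrac23)$. The convexity trick of Lemma~\ref{lem:bipineq} does not transfer. There $a$ is a mass on one side only, so $C^{\top}C$ is affine in it. Here $x$ is at once a row mass and a column mass of the symmetric quotient $M(x,y)$, so neither $M$ nor $M^{\top}M$ is affine in $x$. The claimed convexity is in fact false. At $y=0$ the largest eigenvalue is $\tfrac12\bigl(x+\sqrt{4x-3x^2}\bigr)$, which is strictly concave on $[0,\tfrac13]$; at $x=\tfrac16$ it is about $0.465$, against the chord value $\tfrac13$. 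Moreover, even convexity along horizontal segments would not reduce the triangle $\{x,y\ge0,\ 2x+y\le\tfrac23\}$ to its corners. It would only reduce it to the edge $x=0$ and the whole hypotenuse $2x+y=\tfrac23$, and the hypotenuse is exactly where the content lies.

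There are two ways to close this.
\begin{itemize}
\item Follow the paper. Show $r(x,y)$ is nondecreasing in $y$ by implicit differentiation of $\Phi(s,y)$, so $r\le r_x=r(x,\tfrac23-2x)$. Then $F_x(\tfrac23)=\tfrac{2x}9(1-3x)(2+3x)>0$, together with interlacing, gives $r_x<\tfrac23$ for $0<x<\tfrac13$.
\item Rescue your idea by decoupling. Bound $\rho(G)^2/N^2\le\lambda_{\max}\bigl(D_c^{1/2}K^{\top}D_rKD_c^{1/2}\bigr)$, where $K$ is the $0$--$1$ pattern of the template with loops, and the row masses $D_r$ and column masses $D_c$ range independently over the triangle. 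This is convex in each separately, so its maximum is at one of nine vertex pairs. The values there are $0$ or $\tfrac13$, except $\tfrac49$ at the two diagonal pairs $(\tfrac13,0,\tfrac23)$ and $(0,\tfrac23,\tfrac13)$. The interpolation also yields the equality cases your compactness step needs.
\end{itemize}

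There is also a smaller point in the $T_2$ case. The mass bound plus $g(A)\ge|A|/2-o(N)$ controls only the signed sum $\sum_{v\in A}(g(v)-\tfrac12)$. The $\ell_1$ estimate needs more: either the two-sided bound $\mathbb E|F|=o(1)$ from the random-pair argument, or the paper's observation that $\{v\in A:g(v)<\tfrac12\}$ is independent in the nearly complete link on $A$ and hence has size $o(N)$.
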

	
	The proof rests on two estimates. The first is the instance of
	Lemma~\ref{lem:missing} for the split graphs that arise from rounded fractional
	covers.
	
	\begin{lem}\label{lem:split}
		Suppose that $n\ge12$ and that $a$ is an integer with $n/4\le a\le n/3$. Let
		$T=K_a\vee I_{n-a}$, let $K$ be a spanning subgraph of $T$, and put
		$D=\rho(T)-\rho(K)$. Then $|E(T)\setminus E(K)|\le416Dn$.
	\end{lem}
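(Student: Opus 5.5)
The plan is to derive the statement as a direct instance of Lemma~\ref{lem:missing}, with host graph $T=S_{n,a}=K_a\vee I_{n-a}$, $N=n$ and $p=a$. Since $n\ge12$ and $n/4\le a\le n/3$, we have $3\le a\le n-1$. So Lemma~\ref{lem:blocks}(iii) applies: $T$ has exactly one positive eigenvalue $\lambda=\lambda_0$, all other eigenvalues are nonpositive, and the Perron vector satisfies $\min_iu_i^2=\beta:=(\lambda^2/a+n-a)^{-1}>0$. The hypotheses of Lemma~\ref{lem:missing} therefore hold, and it gives
\[
|E(T)\setminus E(K)|\le D\Bigl(\frac{\lambda^2}{a}+n-a\Bigr)\Bigl(\frac{8n}{\lambda}+2\Bigr).
\]
What remains is to bound the two factors by explicit multiples of $n$ and $1$, using the quadratic $\lambda^2-(a-1)\lambda-a(n-a)=0$ together with the range of $a$.

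\emph{Lower bound on $\lambda$.} The quadratic gives $\lambda^2\ge a(n-a)$, because $(a-1)\lambda\ge0$. On $[n/4,n/3]$ the function $a(n-a)$ is increasing, so $a(n-a)\ge(n/4)(3n/4)=3n^2/16$. Hence $\lambda\ge\sqrt3\,n/4$, and so
\[
\frac{8n}{\lambda}+2\le\frac{32}{\sqrt3}+2<21.
\]

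\emph{Upper bound on $\lambda$.} Solving the quadratic,
\[
\lambda=\tfrac12\Bigl(a-1+\sqrt{(a-1)^2+4a(n-a)}\Bigr)\le\tfrac12\Bigl(a+\sqrt{4an-3a^2}\Bigr).
\]
Both $a$ and $4an-3a^2$ are increasing for $a\le n/3<2n/3$. Evaluating at $a=n/3$ gives $\sqrt{4an-3a^2}=n$, so $\lambda\le\tfrac12(n/3+n)=2n/3$.

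\emph{Bound on $1/\beta$.} Dividing the quadratic by $a$ gives
\[
\frac{\lambda^2}{a}=\frac{a-1}{a}\,\lambda+n-a\le\lambda+n-a .
\]
Therefore $1/\beta\le\lambda+2(n-a)\le 2n/3+3n/2=13n/6$.

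Combining the two factor bounds gives $|E(T)\setminus E(K)|\le 21\cdot\tfrac{13}{6}\,Dn<46\,Dn\le416\,Dn$.

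There is no real obstacle here; the constant $416$ leaves ample slack. The only points needing care are checking the hypotheses of Lemma~\ref{lem:missing}, namely nonpositivity of the other eigenvalues and positivity of the Perron vector, both supplied by Lemma~\ref{lem:blocks}(iii), and using the correct monotonicity ranges for $a(n-a)$ and $4an-3a^2$ when bounding $\lambda$ from below and above.
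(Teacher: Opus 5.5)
Your proof is correct and follows the same route as the paper: both apply Lemma~\ref{lem:missing} to $T$ with $N=n$, after checking that every eigenvalue of $T$ other than the Perron root is nonpositive and bounding $\min_i u_i^2$ and $8n/\lambda$. The only difference is in the estimates. The paper uses the cruder bounds $\min_i u_i^2\ge 1/(16n)$ (from $\lambda\le n$ and $a\ge n/4$) and $8n/\lambda\le 24$ (from $\lambda>n/3$), which give exactly $16\cdot 26=416$. You instead take the exact value of $\min_i u_i^2$ from Lemma~\ref{lem:blocks}(iii) and use $\sqrt3\,n/4\le\lambda\le 2n/3$, which gives the sharper constant $21\cdot\tfrac{13}{6}<46$.
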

	
	\begin{proof}
		Write $\lambda=\rho(T)$ and let $\mathbf u$ be its positive unit Perron vector. By
		symmetry $\mathbf u$ has a constant value $\alpha$ on the clique and $\beta$ on the
		independent set, and the eigenvalue equation at a vertex of the independent set
		gives $\lambda\beta=a\alpha$. Since $T$ contains $K_{a,n-a}$ and $n-a\ge2a$,
		\[
		\lambda\ \ge\ \sqrt{a(n-a)}\ >\ a,\qquad
		\lambda\ \ge\ \frac{\sqrt3}4\,n\ >\ \frac n3 ,
		\]
		the second bound because $a(n-a)\ge\tfrac n4\cdot\tfrac{3n}4=\tfrac3{16}n^2$. Thus
		$\alpha\ge\beta$, and $1=a\alpha^2+(n-a)\beta^2\le n\alpha^2$. As $\lambda\le n$ and
		$a\ge n/4$,
		\[
		\alpha\ge n^{-1/2},\qquad \beta=\frac a\lambda\alpha\ge\frac\alpha4,\qquad
		\min_i u_i^2\ \ge\ \frac1{16n} .
		\]
		The quotient matrix of $T$ with respect to the clique and the independent set is
		$\bigl(\begin{smallmatrix}a-1&n-a\\a&0\end{smallmatrix}\bigr)$, whose determinant is
		negative, so its two eigenvalues have opposite signs; the remaining eigenvalues of
		$T$ are $-1$ and $0$. Every eigenvalue other than $\lambda$ is therefore
		nonpositive, and Lemma~\ref{lem:missing} with $N=n$ and the bound just obtained on $\min_iu_i^2$ gives
		$|E(T)\setminus E(K)|\le16nD(8n/\lambda+2)\le16nD(24+2)=416Dn$.
	\end{proof}
	
	On an $n$-vertex $3$-graph the maximum size of a fractional matching is at most
	$n/3$, since summing the vertex constraints gives $3\sum_ef(e)\le n$, with equality
	exactly when every vertex constraint is tight. By Lemma~\ref{lem:basic}(i) the absence
	of a perfect fractional matching is therefore equivalent to the existence of a
	fractional vertex cover of total weight less than $n/3$; no divisibility assumption
	on $n$ is needed.
	
	\begin{lem}\label{lem:dichot}
		Let $H$ be a $3$-graph on $n$ vertices, where
		\[
		0<\delta\le10^{-4},\qquad n\ge\max\{12,\delta^{-1/2}\},\qquad
		\sigma(H)\ge\Bigl(\tfrac23-\delta\Bigr)n .
		\]
		If $H$ has no perfect fractional matching, then at least one of the following
		holds.
		\begin{enumerate}
			\item[(i)] $H$ has a vertex cover $A'$ with
			$\bigl(\tfrac13-27\delta\bigr)n\le|A'|\le\bigl(\tfrac13+210000\delta\bigr)n$.
			\item[(ii)] There are $A,D\subseteq V(H)$ with
			$\bigl(\tfrac23-109\delta\bigr)n\le|A|\le\tfrac{2n}3$ and
			$|D|\le64\sqrt\delta\,n$ such that every edge of $H-D$ contains at least two
			vertices of $A$.
		\end{enumerate}
	\end{lem}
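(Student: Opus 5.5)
Since $H$ has no perfect fractional matching, Lemma~\ref{lem:basic}(i) gives a minimum fractional vertex cover $g$ with $g(V)<n/3$. As in the proof of Theorem~\ref{thm:target}, the complementary slackness identity produces a vertex $u$ with $d_f(u)<1$, hence $g(u)=0$. Therefore $g$ restricted to $V\setminus\{u\}$ is a fractional vertex cover of the graph $L=L_H(u)$ of total weight less than $n/3$, while $\rho(L)\ge(\tfrac23-\delta)n$.

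The first step is a graph inequality: if a graph on $N\le n$ vertices has a fractional cover of weight $w\le n/3$, then $\rho\le\tfrac23 n-O(1)$, with stability. I would reduce to half-integral covers. The fractional vertex cover polytope of a graph has half-integral vertices, so we may take $g\in\{0,\tfrac12,1\}$ with $g(V)<n/3$. Let $P=g^{-1}(1)$ and $Q=g^{-1}(1/2)$, with $|P|+|Q|/2<n/3$. Every edge of $L$ meets $P$ or lies inside $Q$. Thus $L$ is contained in the union of $K_{|P|}\vee I_{n-1-|P|}$ and a clique on $Q$. I would compare spectral radii of these two templates. The clique alone has $\rho\le|Q|-1$, and the join $S_{N,p}$ has $\lambda_0$ determined by Lemma~\ref{lem:blocks}(iii). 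A convexity/interpolation argument of the same kind as in Lemma~\ref{lem:bipineq}, using quotient matrices with three groups $P,Q,R$, should show that $\rho$ is close to $\tfrac23n$ only near the two corners: $|Q|\approx 2n/3$ with $P\approx\emptyset$, or $|Q|\approx0$ with $|P|\approx n/3$.

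Next I would make the two corners quantitative.

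\emph{Corner $|P|\approx n/3$.} Lemma~\ref{lem:blocks}(iii) gives $\lambda_0\le\tfrac23n-c(\tfrac n3-|P|)$, so $|P|\ge(\tfrac13-27\delta)n$. Lemma~\ref{lem:split} with $a=|P|$ bounds the number of missing join edges by $O(\delta n^2)$. Set $A'=g^{-1}(1)$, now viewed in $H$. The goal is to show that $A'$, together with a correction, is a vertex cover of $H$. Every edge $e$ of $H$ has $g(e)\ge1$, and $g\in\{0,\tfrac12,1\}$ with $|Q|$ small. An edge avoiding $A'$ must contain at least two vertices of $Q$. Adding $Q$ to $A'$ then gives a vertex cover of size $|P|+|Q|\le(\tfrac13+O(\delta))n$. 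Controlling $|Q|=O(\delta n)$ from the spectral estimate yields (i); the constant $210000$ comes from chaining Lemma~\ref{lem:split}'s 416 with the quotient estimate.

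\emph{Corner $|Q|\approx 2n/3$.} The link is nearly a clique on $Q$. Here I would take $A=Q$, possibly trimmed so that $|A|\le 2n/3$, which is allowed because $|Q|<2n/3$ already. Every edge of $H$ avoiding $P$ must have at least two vertices in $Q$, since its weight is at least $1$. So I set $D=P$ and need $|P|\le 64\sqrt\delta n$. This follows from $|P|+|Q|/2<n/3$ together with a lower bound $|Q|\ge(\tfrac23-109\delta)n$ derived from $\rho(L)\le|Q|-1+O(\sqrt{|P|n})$, using Lemma~\ref{lem:basic}(iii)/(iv) to strip the edges at $P$. The square root arises there, and $n\ge\delta^{-1/2}$ absorbs additive constants.

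The main obstacle is the intermediate regime in the spectral inequality: showing that a mixed template with both $|P|$ and $|Q|$ linear has $\rho$ noticeably below $\tfrac23n$, with linear dependence on $\delta$ for the corner sizes. I would first try to write the $3\times3$ quotient matrix explicitly, bound its largest eigenvalue by a Perron-weighted Rayleigh quotient, and check the corners by hand.
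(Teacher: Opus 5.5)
Your proposal has a genuine gap in the return from the link to $H$. After you invoke half-integrality, your $g\in\{0,\tfrac12,1\}$ is a vertex of the fractional cover polytope of the graph $L=L_H(u)$. It therefore constrains only the edges of $H$ through $u$. Yet in both corners you use it as a cover of $H$: you write that ``every edge $e$ of $H$ has $g(e)\ge1$'', and that every edge of $H$ avoiding $P$ has two vertices in $Q$. An edge of $H$ missing $u$ can consist of three vertices to which your link cover assigns $0$ (e.g.\ vertices isolated in $L$). So $P\cup Q$ need not be a vertex cover of $H$, and $D=P$ need not work. You cannot repair this by taking a half-integral cover of $H$ itself, because the cover polytope of a $3$-graph has no such property.

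The missing idea is to keep the original cover $g_0$ of $H$, with $0\le g_0\le1$ and $g_0(V)<n/3$, and use the near-extremal link only to locate its weight. This is what the paper does.

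\emph{Corner $|P|\ge(\tfrac13-27\delta)n$.} Lemma~\ref{lem:split} shows that only $O(\delta n^2)$ pairs of $K_P\vee I_{V\setminus P}$ are absent from the link. Summing $g_0(x)+g_0(y)$ over all $a(n-a)$ cross pairs, with $a=|P|$, gives $(n-a)g_0(P)+a\,g_0(V\setminus P)\ge a(n-a)-O(\delta n^2)$. Together with $g_0(P)\le a$ and $g_0(V)<n/3$, this forces $g_0(V\setminus P)=O(\delta n)$. Then $A'=P\cup\{v\notin P:g_0(v)\ge\tfrac13\}$ has size $(\tfrac13+O(\delta))n$. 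It is a genuine vertex cover, since an edge avoiding it has $g_0$-weight below $1$.

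\emph{Corner $|P|\le27\delta n$.} Lemma~\ref{lem:basic}(iv) gives $|Q|\ge(\tfrac23-109\delta)n$, with only $O(\delta n^2)$ pairs inside $Q$ absent from the link. The vertices of $Q$ with $g_0<\tfrac12$ are pairwise nonadjacent in the link, so there are $O(\sqrt\delta\,n)$ of them. This, not the stripping of $P$, is where $\sqrt\delta$ enters. The budget $g_0(V)<n/3$ then bounds both the excess of $g_0$ over $\tfrac12$ on $Q$ and $g_0(V\setminus Q)$ by $O(\sqrt\delta\,n)$. Take $D=\{v\in Q:g_0(v)>\tfrac58\}\cup\{v\notin Q:g_0(v)>\tfrac18\}$. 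Then any triple avoiding $D$ with at most one vertex in $Q$ has weight at most $\tfrac78$, so it is not an edge.

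Your treatment of the intermediate regime is also only a hope. The paper gets $\min\{x,\tfrac13-x\}\le27\delta$, with $x=|P|/n$, in three steps. It shows that the top eigenvalue of the $3\times3$ quotient is nondecreasing in $y=|Q|/n$, which reduces to $y=\tfrac23-2x$. It then evaluates the characteristic cubic at $\tfrac23$, where it equals $\tfrac{2x}9(1-3x)(2+3x)$. Finally it applies the mean value theorem.
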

	
	\begin{proof}
		Choose a fractional vertex cover $w$ with $\sum_vw_v<n/3$ and put
		$a_0=\min_vw_v<\tfrac13$. The coordinates $\widetilde z_v=(w_v-a_0)/(1-3a_0)$ are
		nonnegative with minimum zero, satisfy $\sum_v\widetilde z_v<n/3$, and satisfy
		$\sum_{v\in e}\widetilde z_v\ge1$ for every edge. Setting
		$z_v=\min\{\widetilde z_v,1\}$ preserves the cover condition, since an edge
		containing a truncated coordinate already receives weight $1$ from it. Hence
		\begin{equation}\label{eq:cover}
			0\le z_v\le1,\qquad\min_vz_v=0,\qquad\sum_vz_v<\frac n3,\qquad
			\sum_{v\in e}z_v\ge1\quad(e\in E(H)).
		\end{equation}
		
		Choose $u$ with $z_u=0$ and let $G$ be $L_H(u)$ together with $u$ as an isolated
		vertex, a graph on $n$ vertices; every edge $xy$ of $G$ satisfies $z_x+z_y\ge1$.
		For $0<t<\tfrac12$ define $f_t(z)=0$ if $z<t$, $f_t(z)=\tfrac12$ if
		$t\le z\le1-t$, and $f_t(z)=1$ if $z>1-t$. If $z_x+z_y\ge1$ then
		$f_t(z_x)+f_t(z_y)\ge1$: if one rounded value is $0$, say $z_x<t$, then
		$z_y\ge1-z_x>1-t$ and the other is $1$; otherwise both are at least $\tfrac12$.
		Taking $t$ uniformly from $(0,\tfrac12)$ gives $\mathbb E_tf_t(z)=z$ for every
		$z\in[0,1]$, so by \eqref{eq:cover} some $t$ has $\sum_vf_t(z_v)<n/3$. Let $X,Y,Z$
		be the sets on which the rounded value is $1,\tfrac12,0$ and put $a=|X|$, $b=|Y|$,
		$c=|Z|$. The rounded cover condition excludes edges between $Y$ and $Z$ and edges
		inside $Z$, so
		\begin{equation}\label{eq:J}
			G\ \subseteq\ J:=K_a\vee(K_b\cup I_c),\qquad a+b+c=n,\qquad 2a+b<\frac{2n}3 .
		\end{equation}
		
		Put $x=a/n$ and $y=b/n$, and add a loop at each vertex of $X\cup Y$ in $J$, which
		only increases the largest eigenvalue. On the subspace of vectors constant on each
		part the resulting matrix has symmetric quotient $nM(x,y)$, where
		\begin{equation}\label{eq:Mxy}
			M(x,y)=\begin{pmatrix}
				x&\sqrt{xy}&\sqrt{x(1-x-y)}\\
				\sqrt{xy}&y&0\\
				\sqrt{x(1-x-y)}&0&0\end{pmatrix},
		\end{equation}
		and the orthogonal complement lies in the kernel. Writing $r(x,y)$ for the largest
		eigenvalue of $M(x,y)$ we therefore have $\rho(G)/n\le r(x,y)$.
		
		For fixed $x\in[0,\tfrac13]$ the quantity $r(x,y)$ is nondecreasing on
		$0\le y\le\tfrac23-2x$. Indeed $r(0,y)=y$; and if $x>0$ and $y>0$ the matrix is
		irreducible, its largest eigenvalue $s$ exceeds $y$, and eliminating the last two
		coordinates from the eigenvalue equations gives
		\[
		\Phi(s,y):=s-x-\frac{xy}{s-y}-\frac{x(1-x-y)}{s}=0 .
		\]
		For $s>y$,
		\[
		\frac{\partial\Phi}{\partial s}=1+\frac{xy}{(s-y)^2}+\frac{x(1-x-y)}{s^2}>0,
		\qquad
		\frac{\partial\Phi}{\partial y}=-\frac{xs}{(s-y)^2}+\frac xs\le0 ,
		\]
		the second because $0<s-y\le s$; moreover $\Phi\to-\infty$ as $s\downarrow y$ and
		$\Phi\to\infty$ as $s\to\infty$. So the root is unique and nondecreasing in $y$,
		and continuity extends this to the endpoints. With \eqref{eq:J} this gives
		$r(x,y)\le r(x,\tfrac23-2x)=:r_x$.
		
		Put $y_x=\tfrac23-2x$ and $c_x=\tfrac13+x$. Expanding $\det(sI-M(x,y_x))$ gives
		$F_x(s)=s^3-(x+y_x)s^2-xc_xs+xy_xc_x$, and
		\begin{equation}\label{eq:Fx}
			F_x\Bigl(\frac23\Bigr)=\frac{2x}9(1-3x)(2+3x)=\frac23\,x\Bigl(\frac13-x\Bigr)(2+3x).
		\end{equation}
		For $0<x<\tfrac13$ this is positive. By interlacing with the principal submatrix
		$\mathrm{diag}(y_x,0)$, the second largest eigenvalue of $M(x,y_x)$ is at most
		$y_x<\tfrac23$, and the third is no larger; since the characteristic polynomial is
		the product of $s$ minus the three eigenvalues, positivity in \eqref{eq:Fx} forces
		$r_x<\tfrac23$. At $x=0$ the matrix is diagonal with largest eigenvalue $\tfrac23$,
		and at $x=\tfrac13$ its nonzero block
		$\bigl(\begin{smallmatrix}1/3&\sqrt2/3\\\sqrt2/3&0\end{smallmatrix}\bigr)$ has
		eigenvalues $\tfrac23$ and $-\tfrac13$. So $r_x\le\tfrac23$ throughout, and the
		spectral hypothesis gives
		\begin{equation}\label{eq:rx}
			\frac23-\delta\ \le\ r_x\ \le\ \frac23 .
		\end{equation}
		For $0\le s\le\tfrac23$, the bounds $x+y_x\le\tfrac23$ and $xc_x\le\tfrac29$ give
		$|F_x'(s)|\le3s^2+2(x+y_x)s+xc_x\le\tfrac43+\tfrac89+\tfrac29<6$. Since
		$F_x(r_x)=0$, the mean value theorem and \eqref{eq:rx} yield
		$\tfrac43x(\tfrac13-x)\le F_x(\tfrac23)\le6\delta$, where the first inequality uses
		\eqref{eq:Fx} and $2+3x\ge2$. Putting $d=\min\{x,\tfrac13-x\}\in[0,\tfrac16]$ and
		using $x(\tfrac13-x)=d(\tfrac13-d)\ge d/6$ gives $d\le27\delta$, that is
		\begin{equation}\label{eq:dichot}
			a\le27\delta n\qquad\text{or}\qquad a\ge\Bigl(\frac13-27\delta\Bigr)n .
		\end{equation}
		
		Suppose first that $a\le27\delta n$. Then $\rho(G)\ge n/2$ and $a\le n/8$ because
		$\delta\le10^{-4}$, so Lemma~\ref{lem:basic}(iv) gives
		$\rho(G-X)\ge\rho(G)-4a\ge(\tfrac23-109\delta)n$. By \eqref{eq:J} the vertices of
		$Z$ are isolated in $G-X$ and the remaining edges lie inside $Y$, so
		$\rho(G[Y])=\rho(G-X)$, and with $\rho(G[Y])\le b$ and \eqref{eq:J},
		\begin{equation}\label{eq:bbound}
			\Bigl(\frac23-109\delta\Bigr)n\ \le\ b\ \le\ \frac{2n}3 .
		\end{equation}
		Let $M$ be the number of edges of $K_Y$ missing from $G[Y]$. By
		Lemma~\ref{lem:basic}(ii), $2e(G[Y])\ge\rho(G[Y])^2$, so
		\begin{equation}\label{eq:M1}
			M=\binom b2-e(G[Y])\le\frac12\Bigl[\Bigl(\frac{2n}3\Bigr)^2
			-\Bigl(\Bigl(\frac23-109\delta\Bigr)n\Bigr)^2\Bigr]\le73\delta n^2 .
		\end{equation}
		Return now to the unrounded cover $z$. The set $L=\{v\in Y:z_v<\tfrac12\}$ is
		independent in $G$, since two of its vertices have total weight less than one, so
		$\binom{|L|}2\le M$ and \eqref{eq:M1} gives
		$|L|\le\sqrt{2M}+1\le\sqrt{146\delta}\,n+1\le14\sqrt\delta\,n$, using
		$\sqrt\delta\,n\ge1$. Writing $s_+=\max\{s,0\}$ and
		$d_-=\sum_{v\in Y}(\tfrac12-z_v)_+$, $d_+=\sum_{v\in Y}(z_v-\tfrac12)_+$, only
		vertices of $L$ contribute to $d_-$ and each contributes at most $\tfrac12$, so
		$d_-\le7\sqrt\delta\,n$. Since $\sum_{v\in Y}z_v=\tfrac b2+d_+-d_-$,
		\eqref{eq:cover} and \eqref{eq:bbound} give
		\begin{equation}\label{eq:dplus}
			d_++\sum_{v\notin Y}z_v\ <\ \frac n3-\frac b2+d_-\ \le\ \frac{109}2\delta n
			+7\sqrt\delta\,n\ \le\ 8\sqrt\delta\,n .
		\end{equation}
		Set $D=\{v\in Y:z_v>\tfrac58\}\cup\{v\notin Y:z_v>\tfrac18\}$. Each vertex of the
		first set contributes more than $\tfrac18$ to $d_+$ and each of the second more
		than $\tfrac18$ to the other sum in \eqref{eq:dplus}, and the two sets are
		disjoint, so $|D|\le64\sqrt\delta\,n$. A triple in $V(H)\setminus D$ containing at
		most one vertex of $Y$ has total weight at most
		$\tfrac58+\tfrac18+\tfrac18=\tfrac78$ and hence is not an edge by
		\eqref{eq:cover}. Taking $A=Y$ gives alternative (ii).
		
		Suppose now that $a\ge(\tfrac13-27\delta)n$. By \eqref{eq:J},
		$b<\tfrac{2n}3-2a\le54\delta n$. Let $T=K_X\vee I_{V(G)\setminus X}$ and let $K$ be
		obtained from $G$ by deleting all edges of $G[Y]$. If $b=0$ then $K=G$; otherwise
		the deleted graph has maximum degree at most $b$, so in either case
		$\|A(G)-A(K)\|_{\mathrm{op}}\le b$, and the Rayleigh principle gives
		$\rho(K)\ge\rho(G)-b\ge(\tfrac23-55\delta)n$. The quotient bound with $y=0$ gives
		$\rho(T)/n\le r(x,0)\le r_x\le\tfrac23$, so $D_0:=\rho(T)-\rho(K)\le55\delta n$.
		Since $\delta\le10^{-4}$ we have $\tfrac13-27\delta>\tfrac14$, so $n/4\le a\le n/3$
		and Lemma~\ref{lem:split} bounds the number $M$ of edges of $T$ missing from $K$ by
		\begin{equation}\label{eq:M2}
			M\le416D_0n\le22880\delta n^2\le23000\delta n^2 .
		\end{equation}
		Write $Z_X=\sum_{v\in X}z_v$, $Z_B=\sum_{v\notin X}z_v$ and $S=Z_X+Z_B<n/3$.
		Summing $z_x+z_y$ over all pairs with $x\in X$ and $y\notin X$ gives
		$(n-a)Z_X+aZ_B$; each of the at least $a(n-a)-M$ present cross edges has weight sum
		at least one and every missing cross pair has nonnegative weight sum, so
		$(n-a)Z_X+aZ_B\ge a(n-a)-M$. Putting $d=a-Z_X\ge0$ and substituting
		$Z_X=a-d$, $Z_B=S-a+d$ gives $(n-2a)d\le a(S-a)+M$. As $n-2a\ge n/3$,
		$S-a<n/3-a\le27\delta n$ and $a\le n/3$, \eqref{eq:M2} yields
		\[
		d\ \le\ \frac{(n/3)(27\delta n)+23000\delta n^2}{n/3}\ =\ 69027\delta n ,
		\]
		whence $Z_B=S-a+d\le n/3-a+d\le69054\delta n$. The set
		$B_{\mathrm{bad}}=\{v\notin X:z_v\ge\tfrac13\}$ therefore has at most
		$3Z_B\le207162\delta n$ vertices. Let $A'=X\cup B_{\mathrm{bad}}$. Every vertex
		outside $A'$ has weight less than $\tfrac13$, so no edge is disjoint from $A'$ and
		$A'$ is a vertex cover; and
		$(\tfrac13-27\delta)n\le a\le|A'|\le\tfrac n3+207162\delta n
		\le(\tfrac13+210000\delta)n$, which is alternative (i).
	\end{proof}
	
	\begin{proof}[Proof of Theorem~\ref{thm:fsnon}]
		Choose
		\[
		0<\delta\le\min\Bigl\{10^{-4},\frac\beta{210000},\frac\beta{109},
		\Bigl(\frac\beta{64}\Bigr)^2\Bigr\},\qquad N_0\ge\max\{12,\delta^{-1/2}\},
		\]
		and apply Lemma~\ref{lem:dichot}. In alternative (i) take $i=1$, $A=A'$ and $D=\emptyset$;
		since $A'$ is a vertex cover, every edge lies in $T_1(A')$, and the size estimate
		follows from $210000\delta\le\beta$. In alternative (ii) take $i=2$ and keep the
		sets $A$ and $D$; the inequalities $109\delta\le\beta$ and $64\sqrt\delta\le\beta$
		give the size bounds, and the edge conclusion is $E(F-D)\subseteq T_2(A)$.
	\end{proof}

	\section{From stability to perfect matchings}\label{sec:pm}
	
	\subsection{Absorbing sets}\label{sec:absorb}
	
	\medskip\noindent\emph{The non-partite case.}
	The absorbing lemma of Lin, Lu, Yuan and
	Zhao~\cite[Lemma~2.10]{LLYZ} requires
	$\sigma(H)>(\tfrac23+\gamma)n$ for a fixed $\gamma>0$,
	so it does not apply at the exact threshold.
	We therefore construct the absorbing set directly.
	
	\begin{lem}\label{lem:absnp}
		For every $0<\mu<1$ there are $\eta>0$ and $n_0$ such that the following holds. If
		$H$ is a $3$-graph on $n\ge n_0$ vertices with $\sigma(H)\ge\tfrac{33}{50}n$, then
		there is $A\subseteq V(H)$ with $|A|\le\mu n$ and $3\mid|A|$ such that $H[A\cup W]$
		has a perfect matching for every $W\subseteq V(H)\setminus A$ with $|W|\le\eta n$
		and $3\mid|W|$.
	\end{lem}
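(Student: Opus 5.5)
We would follow the Rödl--Ruciński--Szemerédi absorbing method, with absorbers for triples of vertices. The spectral hypothesis will only be used through a degree bound. By Lemma~\ref{lem:basic}(ii), $\rho^2+\rho\le 2e$, so $\sigma(H)\ge\tfrac{33}{50}n$ gives
\[
d_H(v)=e(L_H(v))\ \ge\ \tfrac12\bigl(\tfrac{33}{50}\bigr)^2n^2-O(n)\ \approx\ 0.2178\,n^2
\]
for every vertex $v$. This exceeds $\tfrac29\binom n2\approx 0.111\,n^2$, so every link has a positive density of edges. The hope is that this minimum vertex degree alone (about $0.436\binom n2$) is enough to build absorbers.

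\textbf{Step 1: absorbers.} For a triple $T=\{x,y,z\}$ (not necessarily an edge), an absorber is a set $S$ of fixed size, say $|S|=9$, such that both $H[S]$ and $H[S\cup T]$ have perfect matchings. We would use the standard construction: choose an edge $e=\{x',y',z'\}$ disjoint from $T$, and then disjoint pairs $P_x,P_y,P_z$ with the following property. Each of $P_x\cup\{x\}$ and $P_x\cup\{x'\}$ is an edge, and similarly for $y$ and $z$.

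Then $H[S]$ with $S=e\cup P_x\cup P_y\cup P_z$ is matched by $P_x\cup\{x'\}$, $P_y\cup\{y'\}$ and $P_z\cup\{z'\}$. Likewise $H[S\cup T]$ is matched by $e$ together with $P_x\cup\{x\}$, $P_y\cup\{y\}$ and $P_z\cup\{z\}$.

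The needed pairs are the common edges $E(L_H(x))\cap E(L_H(x'))$. By inclusion--exclusion this set has size at least
\[
d(x)+d(x')-\binom{n}{2}\ \ge\ (0.4356-0.5)\binom n2,
\]
which is negative, so this simple bound fails. This is the main obstacle.

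\textbf{Handling the obstacle.} Our first attempt would be a longer absorber in which $x$ and $x'$ are linked through a chain. Pick a vertex $w$ and a pair $P$ with $P\cup\{x\}\in E$, together with another pair $Q$ such that $\{w\}\cup Q$ and $P$-type swaps realise the replacement. Equivalently, we would show that every two vertices $x,x'$ are ``reachable'' via $\Omega(n^{5})$ sets of size $5$, in the sense of Lo--Markström: $S\cup\{x\}$ and $S\cup\{x'\}$ both have perfect matchings.

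To get this we would work with codegrees in the link graphs. A link with at least $0.2178n^2$ edges has at least about $0.1n$ vertices of degree at least $0.2n$. For fixed $x$ and $x'$ we would pick a vertex $v$ of large degree in both links, which exists by a counting argument on degree sums. We would then choose $u\in N_{L(x)}(v)$ and $u'\in N_{L(x')}(v)$, and find a pair completing $\{u,u'\}$ into appropriate edges using the degree of a third vertex. That yields a reachability set $\{v,u,u',\ldots\}$ with $\Omega(n^{5})$ choices.

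If direct reachability fails for some pairs, we would fall back on the lattice-based absorbing method: partition $V$ into a bounded number of reachability classes, each of size $\Omega(n)$. The high vertex degree then forces a single class, because an edge meets at most three classes and the degree bound rules out a barrier structure at density $0.66$. From this we would obtain $\Omega(n^{8})$ absorbing $9$-sets for every triple $T$.

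\textbf{Step 2: random selection.} Choose each candidate absorbing set independently with probability $p=\gamma n^{-8}$, for a small $\gamma$. By Markov and Chernoff bounds the following hold with positive probability:
\begin{itemize}
\item there are at most $\mu n/9\cdot\tfrac12$ chosen sets;
\item at most $o(n)$ pairs of chosen sets intersect;
\item every triple $T$ has at least $\eta n$ chosen absorbers disjoint from $T$.
\end{itemize}
Deleting the intersecting sets and the sets that fail to have a perfect matching leaves a disjoint family. Its union $A$ satisfies $|A|\le\mu n$ and $3\mid |A|$, since $|A|$ is a multiple of $9$. Moreover $H[A]$ has a perfect matching, and each triple still has at least $\eta n$ absorbers in the family.

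Given $W$ with $|W|\le\eta n$ and $3\mid|W|$, partition $W$ into triples. Absorb the triples one at a time, each with a distinct absorber, greedily. This is possible because each triple has at least $\eta n > |W|/3$ absorbers available. The result is a perfect matching of $H[A\cup W]$.
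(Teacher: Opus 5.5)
Your framework is the right one: connectors between pairs, absorbers built from connectors, then random selection. Your Step 2 matches the paper's selection argument; the number of intersecting pairs is of order $\gamma^2 n$ rather than $o(n)$, which still suffices.

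\textbf{The gap.} You use $\sigma(H)\ge\tfrac{33}{50}n$ only through the bound $d_H(v)\ge\tfrac12(\tfrac{33}{50})^2n^2\approx0.2178n^2$, and the lemma is false under that hypothesis alone. Split $V=U\sqcup U'$ with $|U|$ odd and $|U|\approx|U'|\approx n/2$, and let $H$ consist of all triples meeting $U$ in $0$ or $2$ vertices. Every vertex has degree about $n^2/4>0.2178n^2$. If $H[S]$ and $H[S\cup T]$ both have perfect matchings, then $|S\cap U|$ and $|(S\cup T)\cap U|$ are both even. So no triple meeting $U$ in an odd number of vertices has an absorber, and the conclusion of the lemma fails: taking $W=\emptyset$ forces the absorbing set to meet $U$ evenly, and some three-vertex $W$ outside it meets $U$ oddly.

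Hence your assertion that the degree bound ``forces a single class'' is false. This hypergraph has the two reachability classes $U$ and $U'$; note that $0.66$ is the spectral threshold, not a degree density. The direct chain construction cannot succeed either. Your $5$-vertex set $\{v,u,u',\dots\}$ needs a pair $\{a,b\}$ with $\{u,a,b\},\{u',a,b\}\in E(H)$, that is, a pair in $E(L_H(u))\cap E(L_H(u'))$. That is the same common-link quantity you could not bound for $x,x'$, so the obstacle has only been relocated.

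\textbf{What the paper does instead.} The missing ingredient is a use of the spectral condition that excludes parity barriers like this one; in the example every link has spectral radius about $n/2$.
\begin{enumerate}
\item[(a)] Join $u,v$ in an auxiliary graph $R$ when $|E(L_H(u))\cap E(L_H(v))|\ge10^{-6}n^2$.
\item[(b)] Inclusion--exclusion over three links instead of two succeeds, because $3\cdot\tfrac12(\tfrac{33}{50})^2>\tfrac12$; this gives $\alpha(R)\le2$.
\item[(c)] A separate spectral argument gives $\delta(R)>n/200$.
\item[(d)] Together these split $V$ into at most two parts in which any two vertices are joined by many $R$-paths of length at most three. This yields $8$-vertex connectors inside each part.
\item[(e)] To connect across two parts $P,Q$, the paper shows that two consecutive edge types, meeting $P$ in $i$ and $i+1$ vertices, each contain $\Omega(n^3)$ edges. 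Otherwise, after deleting few edges, the link of a suitable vertex would be bipartite between $P$ and $Q$ or confined to one part. Its spectral radius would then be at most $(\tfrac12+o(1))n<0.66n$.
\end{enumerate}
Step (e) is exactly what a degree bound cannot supply. It gives $38$-vertex connectors for every pair and $78$-vertex absorbers for every triple, after which the selection proceeds as in your Step 2.
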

	
	Throughout, $H$ is a $3$-graph on $n$ vertices with $\sigma(H)\ge33n/50$ and $n$
	large, and each link is regarded as a graph on $V(H)$ by adding its defining vertex
	as an isolated vertex, which does not change its spectral radius. Put
	$\beta=10^{-6}$ and $c=\tfrac1{200}$, and define an auxiliary graph $R$ on $V(H)$
	by joining distinct $u,v$ when
	\begin{equation}\label{eq:Rnp}
		\bigl|E(L_H(u))\cap E(L_H(v))\bigr|\ \ge\ \beta n^2 .
	\end{equation}
	
	\begin{proof}
		\medskip\noindent\textbf{Claim 1.} For $n$ large, $\delta(R)>cn$, $\alpha(R)\le2$, and $e(H)>n^3/15$.
		
		By Lemma~\ref{lem:basic}(ii), every link has at least $\tfrac12(\tfrac{33}{50})^2n^2$
		edges. If $u,v,w$ were pairwise nonadjacent in $R$, then inclusion and exclusion,
		discarding the nonnegative triple term, would give
		\[
		\bigl|E(L_H(u))\cup E(L_H(v))\cup E(L_H(w))\bigr|\ \ge\
		\Bigl[\tfrac32\bigl(\tfrac{33}{50}\bigr)^2-3\beta\Bigr]n^2
		\ =\ 0.653397\,n^2\ >\ \binom n2 ,
		\]
		which is impossible since the three edge sets consist of pairs from the same
		$n$-element set. Hence $\alpha(R)\le2$. Counting each hyperedge in its three links,
		\begin{equation}\label{eq:eHnp}
			e(H)=\frac13\sum_{v}e(L_H(v))\ \ge\ \frac16\Bigl(\frac{33}{50}\Bigr)^2n^3
			\ =\ 0.0726\,n^3\ >\ \frac{n^3}{15} .
		\end{equation}
		
		For the minimum degree, fix $u$ and suppose $d_R(u)\le cn$. Write $G=L_H(u)$,
		$Y=N_R(u)\cup\{u\}$, $X=V(H)\setminus Y$ and $r=|Y|$, so that $r/n\le c+1/n\le0.006$
		for $n\ge1000$. For $x\in X$ let $b_x$ be the number of edges of $L_H(x)[X]$ with at
		least one endpoint in $N_{G[X]}(x)$. Then
		\begin{equation}\label{eq:bx}
			\sum_{x\in X}b_x\ \le\ 2\sum_{z\in X}\bigl|E(G[X])\cap E(L_H(z))\bigr|\ <\ 2\beta n^3 .
		\end{equation}
		For the first inequality, consider a triple $\{x,y,z\}\in E(H[X])$: the edge $xy$ of
		$G[X]$ lies in $L_H(z)$ and can account for this triple in $b_x$ and in $b_y$;
		summing over the edges of $G$ inside each triple gives the bound even when a triple
		contributes to more than one term. For the strict inequality, every $z\in X$ is
		nonadjacent to $u$ in $R$, so its common link with $u$ has fewer than $\beta n^2$
		edges.
		
		Let $Z=\{x\in X:b_x>10^{-3}n^2\}$, so $|Z|<0.002n$ by \eqref{eq:bx} and $X\setminus Z$
		is nonempty. Fix $x\in X\setminus Z$, delete from $L_H(x)$ all edges meeting $Y$ and
		then all edges counted by $b_x$, at most $rn+0.001n^2$ in total, and let $F_x$ be
		what remains. Every edge of $F_x$ has both ends in $U_x=X\setminus N_{G[X]}(x)$, and
		$x\in U_x$ since $G$ has no loops, so all vertices outside $U_x$ are isolated and
		\[
		\rho(F_x)=\rho(F_x[U_x])\le\Delta(F_x[U_x])\le|U_x|-1\le n-r-d_{G[X]}(x).
		\]
		Lemma~\ref{lem:basic}(iii) now gives
		$\sigma(H)\le n-r-d_{G[X]}(x)+\sqrt{2rn+0.002n^2}$, hence
		$\Delta(G[X\setminus Z])\le n-r-\sigma(H)+\sqrt{2rn+0.002n^2}$. Deleting from $G$ all
		edges meeting $Y\cup Z$ deletes at most $(r+0.002n)n$ edges, so by
		Lemma~\ref{lem:basic}(iii) again,
		\begin{align*}
		\sigma(H)\le\rho(G)&\le\rho(G[X\setminus Z])+\sqrt{2(r+0.002n)n}\\
		&\le n-r-\sigma(H)+\sqrt{2rn+0.002n^2}+\sqrt{2(r+0.002n)n}.
		\end{align*}
		Dividing by $n$,
		\[
		\frac{2\sigma(H)}n\ \le\ 1-\frac rn+\sqrt{2\Bigl(\frac rn+0.001\Bigr)}
		+\sqrt{2\Bigl(\frac rn+0.002\Bigr)} ,
		\]
		whose right side is at most $1+\sqrt{0.014}+\sqrt{0.016}<1.245$, while its left side
		is at least $1.32$. This contradiction proves $\delta(R)>cn$.

		\medskip\noindent\textbf{Claim 2.} Put $\tau=(c/100)^2$ and $\gamma=\tau/4$. For $n$ large, every graph $R$ on $n$
		vertices with $\delta(R)>cn$ and $\alpha(R)\le2$ has a partition $\mathcal P$ into
		one or two parts, each of size at least $cn/3$, such that any two distinct vertices
		$u,v$ in the same part satisfy, for some $\ell\in\{1,2,3\}$,
		\[
		\#\{\text{simple $u$--$v$ paths of length $\ell$ in }R\}\ \ge\ \gamma n^{\ell-1} .
		\]
		
		Suppose first that no partition $V(R)=P\sqcup Q$ satisfies
		\begin{equation}\label{eq:cut}
			|P|,|Q|\ge cn/2,\qquad e_R(P,Q)<\tau n^2 ,
		\end{equation}
		and take $\mathcal P=\{V(R)\}$. Fix distinct $u,v$. If $uv\in E(R)$ the edge itself
		suffices, as $\gamma<1$. Otherwise put $C=N_R(u)\cap N_R(v)$, $P=N_R(u)\setminus N_R(v)$
		and $Q=N_R(v)\setminus N_R(u)$; since $\alpha(R)\le2$, every other vertex is adjacent
		to $u$ or $v$, so $V(R)=P\sqcup Q\sqcup C\sqcup\{u,v\}$. If $|C|\ge\gamma n$ the
		paths $uzv$ with $z\in C$ suffice. Otherwise
		$|P|=d_R(u)-|C|>(c-\gamma)n\ge cn/2$, and the complement of $P$ contains $N_R(v)$ and
		has size more than $cn$; as \eqref{eq:cut} fails for this cut,
		\[
		e_R(P,Q)\ \ge\ \tau n^2-|P||C|-2n\ \ge\ (\tau-\gamma)n^2-2n\ \ge\ \frac\tau2n^2
		\ \ge\ \gamma n^2
		\]
		for $n$ large, the subtracted terms accounting for edges from $P$ to $C$ and to
		$\{u,v\}$. Every edge $pq$ with $p\in P$, $q\in Q$ gives a distinct simple path
		$upqv$.
		
		Now suppose a partition satisfying \eqref{eq:cut} exists, and let $X$ be the set of
		vertices with more than $\sqrt\tau\,n$ neighbors in the opposite part. The sum of
		cross-part degrees is $2e_R(P,Q)<2\tau n^2$, so $|X|<2\sqrt\tau\,n=cn/50$. Put
		$P_0=P\setminus X$ and $Q_0=Q\setminus X$, both of size at least $0.48cn$. They are
		cliques: if $x,y\in P_0$ were nonadjacent then $\alpha(R)\le2$ would give
		$Q_0\subseteq N_R(x)\cup N_R(y)$, whereas $x$ and $y$ have at most $\sqrt\tau\,n$
		neighbors in $Q$ each, so $0.48cn\le2\sqrt\tau\,n=0.02cn$, a contradiction. Each
		$x\in X$ has more than $cn-|X|>0.98cn$ neighbors in $P_0\cup Q_0$, hence at least
		$cn/3$ in one of the two cliques; assign it there. This gives two parts, each
		containing its core clique and so of size at least $0.48cn$.
		
		Two core vertices are adjacent. An assigned vertex and a core vertex are joined by
		at least $cn/3-1$ paths of length two through the core. Two assigned vertices are
		joined by at least $(cn/3)^2-n$ paths of length three, obtained by choosing a
		neighbor of each in the core and using that the core is a clique. For $n$ large both
		bounds exceed $\gamma n$ and $\gamma n^2$ respectively.

		For distinct $u,v\in V(H)$, a \emph{connector} is a set
		$S\subseteq V(H)\setminus\{u,v\}$ such that $H[S\cup\{u\}]$ and $H[S\cup\{v\}]$ both
		have perfect matchings; and for a three-element set $T$, an \emph{absorber} is a set
		$D\subseteq V(H)\setminus T$ such that $H[D]$ and $H[D\cup T]$ both have perfect
		matchings, $T$ not being required to be an edge.
		
		\medskip\noindent\textbf{Claim 3.} There is an absolute constant $\theta_0>0$ such that, for $n$ large, any two distinct
		vertices in the same part of the partition of Claim 2 have at least
		$\theta_0n^8$ connectors of size $8$.
		
		Fix $u,v$ in the same part and $\ell$ as in Claim 2. For each of the
		at least $\gamma n^{\ell-1}$ paths $p_0=u,p_1,\dots,p_\ell=v$, choose successively
		pairs $B_j\in E(L_H(p_{j-1}))\cap E(L_H(p_j))$ for $j\in[\ell]$, avoiding the path
		vertices and the previous choices. At most eight vertices are forbidden at any
		choice and each lies in fewer than $n$ pairs, so by \eqref{eq:Rnp} there are at least
		$\beta n^2-8n\ge\beta n^2/2$ possibilities once $n\ge16/\beta$. Put
		$S_0=\{p_1,\dots,p_{\ell-1}\}\cup B_1\cup\dots\cup B_\ell$, of size $3\ell-1$. The
		families $\{\{p_{j-1}\}\cup B_j\}$ and $\{\{p_j\}\cup B_j\}$ are perfect matchings of
		$S_0\cup\{u\}$ and $S_0\cup\{v\}$. Add $3-\ell$ disjoint hyperedges avoiding
		$S_0\cup\{u,v\}$ and one another; at most ten vertices are forbidden and at most
		$10\binom{n-1}2\le5n^2$ hyperedges meet them, so by \eqref{eq:eHnp} each choice has at
		least $n^3/15-5n^2\ge n^3/30$ possibilities. The resulting set has size
		$(3\ell-1)+3(3-\ell)=8$ and is a connector, the added edges being appendable to either
		matching. There are at least $\gamma(\beta/2)^\ell(1/30)^{3-\ell}n^8$ constructions,
		and listing the internal path vertices in path order, then $B_1,\dots,B_\ell$ with
		each pair increasing, then the added edges in order of selection with each increasing,
		shows that each $8$-set is counted at most $8!$ times. So
		$\theta_0=\tfrac\gamma{8!}\min_{1\le\ell\le3}(\beta/2)^\ell(1/30)^{3-\ell}$ works.

		\medskip\noindent\textbf{Claim 4.} There is an absolute constant $\zeta>0$ such that, for $n$ large, every pair of
		distinct vertices of $H$ has at least $\zeta n^{38}$ connectors of size $38$.
		
		For two vertices in the same part of $\mathcal P$, extend an eight-vertex connector by
		ten disjoint hyperedges, each avoiding the connector, the two defining vertices and
		the previous additions. At most $40$ vertices are forbidden, so \eqref{eq:eHnp} gives
		at least $n^3/15-40\binom{n-1}2\ge n^3/30$ choices for $n$ large. The extended set has
		size $38$ and is again a connector, and its multiplicity is at most $38!$, so the pair
		has at least
		\begin{equation}\label{eq:38same}
			\frac{\theta_0}{30^{10}\,38!}\,n^{38}
		\end{equation}
		connectors. This settles the claim if $\mathcal P$ has one part.
		
		Suppose $\mathcal P=\{P,Q\}$ with $|P|,|Q|\ge cn/3$, and put $\xi=c/10^6$. An edge has
		\emph{type} $j$ if it contains exactly $j$ vertices of $P$, and a type is \emph{rich}
		if it contains at least $\xi n^3$ edges. We claim two consecutive types are rich. If
		not, the rich types lie in one of $\{0,2\}$, $\{1,3\}$, $\{0,3\}$, these being the
		maximal subsets of $\{0,1,2,3\}$ without consecutive integers. Retain only such a set
		and call the result $H'$; fewer than $4\xi n^3$ edges are deleted, so the degree losses
		sum to at most $12\xi n^3$. If the retained types are $\{0,2\}$, the link in $H'$ of a
		vertex of $P$ is bipartite between $P$ and $Q$; if they are $\{1,3\}$, the same holds
		for a vertex of $Q$; if they are $\{0,3\}$, the link of a vertex in the smaller of
		$P,Q$ has all its nonisolated vertices in one part, so its spectral radius is at most
		$n/2$. A bipartite graph with part sizes $a,b$ has spectral radius at most
		$\sqrt{ab}\le(a+b)/2\le n/2$. In each case the part under consideration has at least
		$cn/3$ vertices, so one of them, say $v$, has degree loss at most
		$12\xi n^3/(cn/3)=36\xi n^2/c$, and Lemma~\ref{lem:basic}(iii) gives
		\[
		\rho(L_H(v))\ \le\ \rho(L_{H'}(v))+\sqrt{2\bigl(d_H(v)-d_{H'}(v)\bigr)}
		\ \le\ \Bigl(\frac12+\sqrt{\frac{72\xi}c}\Bigr)n\ <\ 0.509\,n,
		\]
		contradicting $\sigma(H)\ge0.66n$.
		
		Choose consecutive rich types $i,i+1$ with $0\le i\le2$, fix $u\in P$ and $v\in Q$, and
		choose disjoint edges $e=\{b,x_1,x_2\}$ of type $i$ and $f=\{a,y_1,y_2\}$ of type
		$i+1$ avoiding $u,v$, with $b\in Q$ and $a\in P$; such $b$ and $a$ exist because an
		edge of type $i\le2$ meets $Q$ and an edge of type $i+1\ge1$ meets $P$. The pairs remaining after deleting $b$ and $a$ each contain exactly $i$
		vertices of $P$, so ordering each pair first by part and then by a fixed vertex order
		puts $x_j$ and $y_j$ in the same part for $j=1,2$. At most $5\binom{n-1}2$ edges are
		excluded at either choice, so richness leaves at least $\xi n^3/2$ choices at each
		stage. By Claim 3 choose four pairwise disjoint eight-vertex connectors
		avoiding $u,v$ and $e\cup f$: $C_0$ for $u,a$; $C_3$ for $b,v$; and $C_j$ for
		$x_j,y_j$. At most $32$ vertices are forbidden before any choice and at most
		$32\binom{n-1}7\le32n^7$ eight-element sets meet them, so each choice has at least
		$\theta_0n^8/2$ possibilities. Put $S=e\cup f\cup C_0\cup C_1\cup C_2\cup C_3$, of size
		$38$. The edge $f$ together with perfect matchings on $C_0\cup\{u\}$, $C_3\cup\{b\}$,
		$C_1\cup\{x_1\}$, $C_2\cup\{x_2\}$ covers $S\cup\{u\}$, and the edge $e$ together with
		perfect matchings on $C_0\cup\{a\}$, $C_3\cup\{v\}$, $C_1\cup\{y_1\}$, $C_2\cup\{y_2\}$
		covers $S\cup\{v\}$. There are at least $(\xi n^3/2)^2(\theta_0n^8/2)^4=\xi^2\theta_0^4
		n^{38}/64$ constructions, each set arising at most $38!$ times, so with
		\eqref{eq:38same} one may take
		$\zeta=\min\{\theta_0/(30^{10}38!),\ \xi^2\theta_0^4/(64\cdot38!)\}$.

		\medskip\noindent\textbf{Claim 5.} There is an absolute constant $\omega>0$ such that, for $n$ large, every
		three-element set has at least $\omega n^{78}$ absorbers of size $78$.
		
		Fix $T=\{t_1,t_2,t_3\}$ and choose $xy\in E(L_H(t_1))$ avoiding $T$; the link has at
		least $\tfrac12(33/50)^2n^2$ edges and avoiding $t_2,t_3$ excludes at most $2n$ of
		them, so there are at least $n^2/10$ choices. By Claim 4 choose
		disjoint $38$-vertex connectors $C_2$ for $t_2,x$ and $C_3$ for $t_3,y$, both avoiding
		$T\cup\{x,y\}$; at most $43$ vertices are forbidden and at most $43n^{37}$ sets of size
		$38$ meet them, leaving at least $\zeta n^{38}/2$ choices at each stage. Then
		$D=\{x,y\}\cup C_2\cup C_3$ has size $78$, perfect matchings on $C_2\cup\{x\}$ and
		$C_3\cup\{y\}$ cover $D$, and the edge $\{t_1,x,y\}$ with perfect matchings on
		$C_2\cup\{t_2\}$ and $C_3\cup\{t_3\}$ covers $D\cup T$. There are at least
		$(n^2/10)(\zeta n^{38}/2)^2=\zeta^2n^{78}/40$ constructions and each absorber occurs at
		most $78!$ times, so $\omega=\zeta^2/(40\cdot78!)$ works.

		\medskip\noindent\emph{Completion of the proof.} Let $q=78$ and choose
		$0<\theta\le\min\{\mu/(2q),\ \omega/(4q^2)\}$ and $\eta=\omega\theta/8$. Select each
		$q$-element subset of $V(H)$ independently with probability $p=\theta n/\binom nq$ and
		let $\mathcal F$ be the resulting family, so $\mathbb E|\mathcal F|=\theta n$. For a
		fixed $T$, Claim 5 gives at least $\omega\binom nq$ absorbers, so the
		number $Y_T$ of selected ones has $\mathbb EY_T\ge\omega\theta n$. By the Chernoff
		bounds, $\Pr(|\mathcal F|>2\theta n)\le e^{-\theta n/3}$ and
		$\Pr(Y_T<\omega\theta n/2)\le e^{-\omega\theta n/8}$, and a union bound over the at
		most $n^3$ sets $T$ shows that the probability that some bound fails tends to zero.
		Let $Z$ be the number of unordered pairs of distinct intersecting members of
		$\mathcal F$; a fixed $q$-set meets at most $q\binom{n-1}{q-1}=\tfrac{q^2}n\binom nq$
		others, so $\mathbb EZ\le\tfrac12\binom nq\tfrac{q^2}n\binom nq p^2=\tfrac{q^2\theta^2}2n$
		and $\Pr(Z>q^2\theta^2n)\le\tfrac12$ by Markov. Hence for $n$ large there is a family
		with $|\mathcal F|\le2\theta n$, $Y_T\ge\omega\theta n/2$ for every $T$, and
		$Z\le q^2\theta^2n$.
		
		Delete one member of each intersecting pair, which removes at most $Z$ sets, and then
		delete every remaining $D$ for which $H[D]$ has no perfect matching; the latter removes
		no absorber. In the resulting family $\mathcal F'$ every $T$ still has at least
		$\omega\theta n/2-q^2\theta^2n\ge\omega\theta n/4$ absorbers. Put
		$A=\bigcup_{D\in\mathcal F'}D$; the members are disjoint and each has a perfect
		matching, so $H[A]$ does, and $|A|=q|\mathcal F'|\le2q\theta n\le\mu n$ with
		$3\mid|A|$ since $3\mid q$. Finally, given $W$ with $|W|\le\eta n$ and $3\mid|W|$,
		partition it into triples $T_1,\dots,T_s$ with $s=|W|/3\le\omega\theta n/24$ and assign
		to each a distinct absorber $D_i\in\mathcal F'$; when $T_i$ is considered fewer than
		$\omega\theta n/24$ members have been used against at least $\omega\theta n/4$
		available. Perfect matchings on each $D_i\cup T_i$, together with perfect matchings on
		the unassigned members, cover $A\cup W$.
	\end{proof}

	\medskip\noindent\emph{The partite case.}
	Here the auxiliary graph is built from a rigidity statement for pairs of links that
	are almost complementary, which also yields expansion and so replaces the separate
	independence and degree estimates of the non-partite argument. Write $c=1/\sqrt2$.
	
	\begin{lem}\label{lem:rigid}
		For every $\theta>0$ there are $\omega>0$ and $Q$ with the following property. Let
		$G$ and $F$ be bipartite graphs on common classes $Y,Z$ of size $q\ge Q$ with
		$\rho(G),\rho(F)\ge(c-\omega)q$ and $|E(G)\cap E(F)|\le\omega q^2$. Then, after
		possibly interchanging $Y$ and $Z$, there is $S\subseteq Y$ with
		$\bigl||S|-q/2\bigr|\le\theta q$,
		$|E(G)\triangle(S\times Z)|\le\theta q^2$ and
		$|E(F)\triangle((Y\setminus S)\times Z)|\le\theta q^2$.
	\end{lem}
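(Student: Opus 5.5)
Since $G$ and $F$ share few edges, the union $U=G\cup F$ nearly decomposes into two edge-disjoint graphs. Each of them has spectral radius close to $cq$, and hence at least $(\tfrac12-o(1))q^2$ edges by Lemma~\ref{lem:basic}(ii). Together that gives $e(U)\ge(1-o(1))q^2$, while $e(U)\le q^2$. So $G$ and $F$ are almost complementary inside $K_{Y,Z}$, and each has almost exactly $q^2/2$ edges. The argument then hinges on the equality case of $\rho^2\le e$ in Lemma~\ref{lem:basic}(ii): a bipartite graph with $\rho^2$ nearly equal to $e$ must be close to a rank-one $0/1$ pattern, that is, close to a complete bipartite rectangle.

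I would run a compactness argument by contradiction, as in the stability part of Lemma~\ref{lem:bipineq}. Take sequences $G_j,F_j$ with $\omega_j\to0$ that violate the conclusion for a fixed $\theta$. Let $B$ be the normalized biadjacency matrix of $G$, with entries $1/q$ on edges. Then $\|B\|_F^2=e(G)/q^2\to\tfrac12$ and $\|B\|_{\rm op}^2\ge\tfrac12-o(1)$, so the top singular value carries almost all of the Frobenius mass. Hence $B$ is within $o(1)$ in Frobenius norm of a rank-one nonnegative matrix $\sigma\,\mathbf x\mathbf y^{\top}$, with Perron singular vectors $\mathbf x,\mathbf y\ge0$.

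Because $B$ is $0/1$ up to scaling, the next step is to show that the rank-one approximant is close to $\mathbf 1_S\mathbf 1_T^{\top}/q$ for sets $S\subseteq Y$ and $T\subseteq Z$ with $|S||T|\approx q^2/2$. The concrete way I would do this is to threshold: set $S=\{y:x_y\ge\kappa/\sqrt q\}$ and $T$ analogously, then bound the $\ell_2$ error outside those sets. I expect this to be the main technical obstacle. A rank-one matrix within $o(1)$ of a $0/1$ matrix can in principle have coordinates taking several values, and I would rule this out by noting that entries $x_yy_z\sigma$ must be near $0$ or near $1/q$ on most pairs. This forces the products to be two-valued, so each vector is essentially constant on its support. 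An alternative that avoids the singular-vector analysis is to apply Lemma~\ref{lem:missing}-type counting to $G\subseteq K_{S,T}$.

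Now apply the same reasoning to $F$, which gives a rectangle $S'\times T'$. The two rectangles are almost disjoint and together cover almost all of $Y\times Z$. Two disjoint rectangles whose union is nearly all of $Y\times Z$ must split one side: either $T\approx T'\approx Z$ and $S'\approx Y\setminus S$, or the same with $Y$ and $Z$ exchanged. Indeed, if both $S\ne Y$ and $T\ne Z$ in proportion, then the region $(Y\setminus S)\times(Z\setminus T)$ together with $S\times T$ cannot both be covered by a single disjoint rectangle. A short case analysis of measures makes this precise. After interchanging classes if needed, we get $|S|\cdot q\approx q^2/2$, so $|S|=(\tfrac12+o(1))q$, and both symmetric-difference bounds are $o(q^2)$. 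This contradicts the choice of the sequence.
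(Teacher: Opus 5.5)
Your outline is correct, but the central step is done differently in the paper. You extract a rectangle from $G$ through the rank-one approximant $\sigma\mathbf x\mathbf y^{\top}$ and a thresholding of the singular vectors, and you leave the ``two-valued'' step as a sketch. The paper never uses singular vectors. It uses Cauchy--Binet to write the sum $M(B)$ of squared $2\times2$ minors of the $0/1$ biadjacency matrix as $\sum_{i<j}s_i^2s_j^2\le E\,(E-s_1^2)$. For a $1$-entry $(i,j)$, the rectangle spanned by column $j$ and row $i$ differs from $B$ in exactly the number of nonzero minors with a corner at $(i,j)$. Averaging over $1$-entries gives a rectangle within $4\bigl(e(G)-\rho(G)^2\bigr)$ entries of $G$, an explicit bound with no thresholding.

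Your step can be completed. Note, though, that Perron vectors are usually positive everywhere, so ``constant on its support'' has to mean ``nearly constant where not negligible''. The cleanest way is as follows. Write $w=\sigma\mathbf x\mathbf y^{\top}$, and fix a $1$-entry $(y_0,z_0)$ whose row and column contain few pairs with $|B_{yz}-w_{yz}|$ large. The identity $w_{yz}w_{y_0z_0}=w_{yz_0}w_{y_0z}$ then shows that every good row is close either to $0$ or to row $y_0$. This is exactly the minor relation the paper sums.

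For the second half, the paper does not extract a separate rectangle from $F$. It notes that $F$ is $o(q^2)$-close to the complement $P$ of $G$'s rectangle $A\times D$, and that $M$ moves by $O(q^2)$ per changed entry. It then computes $M(P)=a(1-a)b(1-b)q^4$, which together with $ab\to\tfrac12$ forces one side to be split. Your two-rectangle route also works. However, the sentence about a ``single disjoint rectangle'' should be replaced by the actual computation. Write $s=|S|/q$, and similarly $t,s',t'$. From $|S\cup S'|\,|T\cup T'|\ge(1-o(1))q^2$ and $|S\cap S'|\,|T\cap T'|=o(q^2)$ you get, say, $s+s'\to1$. Then $st\to\tfrac12$ and $s't'\to\tfrac12$, with all four ratios at most $1$, force $s,s'\to\tfrac12$ and $t,t'\to1$.

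The alternative you mention via Lemma~\ref{lem:missing} is circular, since it presupposes the containing rectangle.
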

	
	\begin{proof}
		For a zero-one matrix $B$ put $E=\|B\|_F^2$, $R=E-s_1(B)^2$, and let $M(B)$ be the
		sum of the squares of the $2\times2$ minors of $B$, each counted once. By
		Cauchy--Binet applied to the second elementary symmetric function of $B^{\top}B$,
		$M(B)=\sum_{i<j}s_i(B)^2s_j(B)^2$, and separating the terms containing $s_1(B)^2$,
		whose sum is $s_1(B)^2R$, from the rest, whose sum is at most $R^2/2$, gives
		$M(B)\le ER$. For a position $(i,j)$ with $B_{ij}=1$, the rectangle
		$P_{ij}=\{k:B_{kj}=1\}\times\{\ell:B_{i\ell}=1\}$ differs from $B$ in
		$D_{ij}=\sum_{k,\ell}(B_{k\ell}-B_{kj}B_{i\ell})^2$ entries, each summand being the
		square of the minor on rows $i,k$ and columns $j,\ell$; summing over these positions
		counts each unordered minor at most four times, so
		$\sum_{(i,j):B_{ij}=1}D_{ij}\le4M(B)$ and, when $E>0$, some rectangle differs from
		$B$ in at most $4M(B)/E\le4R$ entries.
		
		Suppose the lemma fails for some $\theta$, and take counterexamples with
		$q_j\ge j$ and $\omega_j\le1/j$; all asymptotics refer to this sequence. By
		Lemma~\ref{lem:basic}(ii), $e(G),e(F)\ge(\tfrac12-o(1))q^2$, while
		$e(G)+e(F)\le q^2+|E(G)\cap E(F)|$, so
		\[
		e(G)=e(F)=\bigl(\tfrac12+o(1)\bigr)q^2,\qquad
		e(G)-\rho(G)^2=o(q^2),\qquad e(F)-\rho(F)^2=o(q^2).
		\]
		The matrix observation gives a rectangle $A\times D$ with $A\subseteq Y$,
		$D\subseteq Z$ differing from $G$ in $o(q^2)$ edges; put $a=|A|/q$ and $b=|D|/q$, so
		$ab=\tfrac12+o(1)$. Also
		$|E(F)\triangle E(G^c)|=q^2-e(G)-e(F)+2|E(G)\cap E(F)|=o(q^2)$, the complement being
		taken in $Y\times Z$, so $F$ differs in $o(q^2)$ entries from the complement $P$ of
		$A\times D$. Changing one entry of a zero-one matrix changes at most $(q-1)^2$
		squared minors by at most one each, so $M(P)=M(F)+o(q^4)=o(q^4)$. A minor of $P$ is
		nonzero exactly when its rows lie on opposite sides of $A$ and its columns on
		opposite sides of $D$, whence $M(P)=a(1-a)b(1-b)q^4$ and $a(1-a)b(1-b)=o(1)$. Since
		$ab=\tfrac12+o(1)$, both $a$ and $b$ are bounded away from zero, so along a
		subsequence either $a\to\tfrac12$ and $b\to1$, or $a\to1$ and $b\to\tfrac12$. In the
		first case $G$ differs in $o(q^2)$ edges from $A\times Z$ and $F$ from
		$(Y\setminus A)\times Z$; in the second the same holds after interchanging the
		classes. For large $j$ this contradicts the choice of counterexamples.
	\end{proof}
	
	Fix $\theta=10^{-4}$, let $\omega$ and $Q$ be given by Lemma~\ref{lem:rigid}, and put
	\[
	\delta_*=\beta=\min\{\omega,\theta\},\qquad \kappa=\theta^2,\qquad d_*=0.49 .
	\]
	For a $q$-balanced $3$-partite $H$ with classes $X,Y,Z$, let $R_X$ join distinct
	$x,x'\in X$ when $|E(L_H(x))\cap E(L_H(x'))|\ge\beta q^2$, and define $R_Y$, $R_Z$
	analogously.
	
	\begin{lem}\label{lem:expand}
		For $q$ large, if $\sigma(H)\ge(c-\delta_*)q$, then each of $R_X,R_Y,R_Z$ has
		minimum degree at least $0.49q$, and $e_R(A,B)\ge\kappa q^2$ whenever $A,B$ are
		disjoint sets of at least $0.4q$ vertices.
	\end{lem}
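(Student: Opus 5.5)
The plan is to derive both assertions from Lemma~\ref{lem:rigid}, applied to pairs of links that are nonadjacent in the auxiliary graph. By symmetry we treat $R_X$. For $x\in X$ the link $L_H(x)$ is bipartite on $Y\cup Z$ with $\rho\ge(c-\delta_*)q\ge(c-\omega)q$. If $x,x'$ are nonadjacent in $R_X$, then $|E(L_H(x))\cap E(L_H(x'))|<\beta q^2\le\omega q^2$. Lemma~\ref{lem:rigid} then gives an orientation and a set $S\subseteq Y$ (or $S\subseteq Z$) with $|S|=(\tfrac12\pm\theta)q$ such that:
\begin{itemize}
\item $L_H(x)$ is within $\theta q^2$ of $S\times Z$;
\item $L_H(x')$ is within $\theta q^2$ of $(Y\setminus S)\times Z$.
\end{itemize}
Since the set $S$ is determined by $L_H(x)$ up to $O(\theta q)$ vertices, every non-neighbour $x'$ of $x$ has its link within $O(\theta q^2)$ of the \emph{same} complementary rectangle $(Y\setminus S)\times Z$.

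\emph{Minimum degree.} Suppose $d_{R_X}(x)<0.49q$, and let $N$ be the set of non-neighbours of $x$ in $X$, so $|N|>0.51q$. Assume the orientation $S\subseteq Y$; the other orientation is symmetric. For each $x'\in N$, only $O(\theta q^2)$ edges of $L_H(x')$ meet $S$. Hence
\[
\sum_{y\in S}\#\{\text{edges of }L_H(y)\text{ meeting }N\}=O(\theta q^2)\,|N|,
\]
so some $y\in S$ has only $O(\theta q^2)$ edges of $L_H(y)$ meeting $N$. Removing these edges leaves a bipartite graph whose $X$-side lies in $X\setminus N$, a set of size less than $0.49q$. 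By Lemma~\ref{lem:basic}(ii),
\[
\rho(L_H(y))^2\le e(L_H(y))\le 0.49q^2+O(\theta)q^2<(c-\delta_*)^2q^2 ,
\]
because $c^2=0.5$ and $\theta=10^{-4}$. This contradicts $\sigma(H)\ge(c-\delta_*)q$.

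\emph{Expansion.} Suppose $A,B\subseteq X$ are disjoint, $|A|,|B|\ge0.4q$, and $e_R(A,B)<\kappa q^2$. Averaging, pick $x\in A$ with fewer than $\kappa q^2/(0.4q)=2.5\theta^2q$ neighbours in $B$. This yields $B'\subseteq B$ with $|B'|\ge0.39q$ whose links are all close to $(Y\setminus S)\times Z$. Averaging again over $B'$, pick $b\in B'$ with few neighbours in $A$. This yields $A'\subseteq A$ with $|A'|\ge0.39q$ whose links are all close to the complement of $L_H(b)$, that is, to $S\times Z$. Only $O(\theta q^2)$ per-vertex errors accumulate, so in total the sum of errors is $O(\theta q^3)$.

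Now look at $L_H(z)$ for $z\in Z$, a bipartite graph on $X\cup Y$. By averaging over $z$, some $z$ has, up to $O(\theta q^2)$ edges:
\begin{itemize}
\item rows of $A'$ supported in $S$;
\item rows of $B'$ supported in $Y\setminus S$;
\item the rows of $C=X\setminus(A'\cup B')$ arbitrary.
\end{itemize}
Write $\alpha=|A'|/q$, $\beta'=|B'|/q$, $\gamma=|C|/q$, so $\alpha+\beta'+\gamma=1$ and $\alpha,\beta'\ge0.39$. The template $(A'\times S)\cup(B'\times(Y\setminus S))\cup(C\times Y)$ reduces, on vectors constant on blocks, to a $3\times2$ matrix. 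Its Gram matrix is approximately
\[
\tfrac12\bigl(\mathrm{diag}(\alpha,\beta')+\gamma J_2\bigr).
\]
The largest eigenvalue of this Gram matrix is at most $\tfrac12(\max\{\alpha,\beta'\}+2\gamma)\le\tfrac12\cdot0.83<0.42$. Lemma~\ref{lem:basic}(iii) absorbs the $O(\theta q^2)$ error edges at a cost of $O(\sqrt\theta)q$. This gives $\rho(L_H(z))\le(\sqrt{0.42}+O(\sqrt\theta))q<(c-\delta_*)q$, a contradiction. This is the same mechanism that kills the construction $P\times S\times Z\cup Q\times(Y\setminus S)\times Z$, whose $Z$-links have spectral radius only $q/2$.

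The main obstacle is the bookkeeping in the expansion step. Two things must be checked:
\begin{itemize}
\item the sets $S$ produced by the separate applications of Lemma~\ref{lem:rigid} agree up to $O(\theta q)$ vertices, including the possible interchange of $Y$ and $Z$;
\item a single $z$ can be found at which all per-vertex errors are simultaneously small.
\end{itemize}
For the first, I would fix $S$ from $L_H(x)$ once. I would then use that two near-rectangles of half size must have matching orientation and nearly equal sides, since otherwise they share about $q^2/4$ edges. If the orientation is $S\subseteq Z$ instead, the same argument runs with the links of $y\in Y$ in place of those of $z$.
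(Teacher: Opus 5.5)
Your proposal is correct and is essentially the paper's argument. For the minimum degree you apply Lemma~\ref{lem:rigid} once and average over $S$ to find a $y$ of too small degree; for expansion you choose a non-adjacent pair with few opposite $R_X$-neighbours, fit $H$ to the template $(A'\times S\times Z)\cup(B'\times(Y\setminus S)\times Z)\cup(C\times Y\times Z)$, and find a $z$ whose link has spectral radius only about $0.65q$. The consistency of $S$ across repeated applications of the rigidity lemma, which you flag as the main obstacle, is not needed: as in the paper, for every non-neighbour $w$ of $u$ one has directly $e(L_H(w)[S,Z])\le|E(L_H(w))\cap E(L_H(u))|+|(S\times Z)\setminus E(L_H(u))|\le(\beta+\theta)q^2$.
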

	
	\begin{proof}
		By Lemma~\ref{lem:basic}(ii) every vertex satisfies
		\begin{equation}\label{eq:degstar}
			d_H(v)\ge(c-\delta_*)^2q^2\ge\bigl(\tfrac12-2\delta_*\bigr)q^2\ge d_*q^2 ,
		\end{equation}
		and by symmetry we treat $R_X$. Fix $u\in X$ and let $B$ be the set of vertices of
		$X\setminus\{u\}$ nonadjacent to $u$ in $R_X$; we may assume $B\ne\emptyset$ and
		pick $w_0\in B$. Lemma~\ref{lem:rigid} applied to $L_H(u)$ and $L_H(w_0)$ gives,
		after possibly interchanging $Y$ and $Z$, a set $S\subseteq Y$ with
		$|S|\ge(\tfrac12-\theta)q$ and $|E(L_H(u))\triangle(S\times Z)|\le\theta q^2$. For
		every $w\in B$,
		$e(L_H(w)[S,Z])\le|E(L_H(w))\cap E(L_H(u))|+|(S\times Z)\setminus E(L_H(u))|
		\le(\beta+\theta)q^2$; summing over $w$ and averaging over $S$ produces $y\in S$
		with $d_H(y;B,Z)\le|B|(\beta+\theta)q^2/|S|\le3(\beta+\theta)q^2$. Since at most
		$(q-|B|)q$ edges through $y$ have their $X$-vertex outside $B$, \eqref{eq:degstar}
		gives $(\tfrac12-2\delta_*)q^2\le(q-|B|)q+3(\beta+\theta)q^2$, so
		$d_{R_X}(u)=q-1-|B|\ge(\tfrac12-8\theta)q-1\ge0.49q$ for $q$ large.
		
		For the second assertion, suppose disjoint $A,B\subseteq X$ of size at least $0.4q$
		have $e_{R_X}(A,B)<\kappa q^2$. Fewer than $\theta q$ vertices of either set have
		more than $\theta q$ neighbors in the other, so we may pick nonadjacent $a_0\in A$
		and $b_0\in B$ each with at most $\theta q$ neighbors opposite. Lemma~\ref{lem:rigid}
		applied to their links gives, after possibly interchanging $Y,Z$, a set
		$S\subseteq Y$ with $s=|S|/q$ satisfying $|s-\tfrac12|\le\theta$,
		$|E(L_H(a_0))\triangle(S\times Z)|\le\theta q^2$ and
		$|E(L_H(b_0))\triangle((Y\setminus S)\times Z)|\le\theta q^2$; consequently
		$e(L_H(a)[Y\setminus S,Z])\le(\beta+\theta)q^2$ for $a\in A\setminus N_{R_X}(b_0)$
		and $e(L_H(b)[S,Z])\le(\beta+\theta)q^2$ for $b\in B\setminus N_{R_X}(a_0)$. Let
		$\mathcal T$ consist of the legal triples in
		$(A\times S\times Z)\cup(B\times(Y\setminus S)\times Z)\cup((X\setminus(A\cup B))\times Y\times Z)$;
		the bounds just given, together with the at most $\theta q$ exceptional vertices in
		each of $A$ and $B$, yield
		$|E(H)\setminus\mathcal T|\le(|A|+|B|)(\beta+\theta)q^2+2\theta q^3\le6\theta q^3$,
		so some $z\in Z$ has at most $6\theta q^2$ link edges outside the bipartite template
		$T=L_{\mathcal T}(z)$. Writing $a=|A|/q$ and $b=|B|/q$ and grouping the columns by
		$S$ and $Y\setminus S$, the nonzero eigenvalues of $q^{-2}T^{\top}T$ are those of
		\[
		\begin{pmatrix}
			s(1-b) & (1-a-b)\sqrt{s(1-s)}\\
			(1-a-b)\sqrt{s(1-s)} & (1-s)(1-a)
		\end{pmatrix}.
		\]
		Since $a,b\ge0.4$, each diagonal entry is at most $0.3+0.6\theta$ and the
		off-diagonal entry at most $0.1$, so the largest eigenvalue is at most the largest
		row sum, namely $0.4+0.6\theta<0.64^2$, giving $\rho(T)<0.64q$. As $L_H(z)$ lies in
		$T$ together with at most $6\theta q^2$ further edges,
		Lemma~\ref{lem:basic}(ii) and the triangle inequality give
		$\rho(L_H(z))<(0.64+\sqrt{6\theta})q<0.665q$, contradicting
		$\rho(L_H(z))\ge(c-\delta_*)q>0.7q$.
	\end{proof}
	
	For distinct $u,v$ in the same class, an \emph{$8$-connector} is an $8$-element set
	$S\subseteq V(H)\setminus\{u,v\}$ such that $H[S\cup\{u\}]$ and $H[S\cup\{v\}]$ both
	have perfect matchings; such a set has two vertices in the class of $u,v$ and three
	in each other class. A \emph{balanced triple} is a set with one vertex in each
	class, not required to be an edge, and an \emph{absorber} for it is an $18$-element
	set $A$ disjoint from it such that $H[A]$ and $H[A\cup T]$ both have perfect
	matchings.
	
	\begin{lem}\label{lem:abs}
		For every $\alpha>0$ there are $\mu>0$ and $Q_\alpha$ such that, if $q\ge Q_\alpha$
		and $\sigma(H)\ge(c-\delta_*)q$, then $H$ has a matching $M_{\mathrm{abs}}$ with at
		most $\alpha q$ edges such that $H[V(M_{\mathrm{abs}})\cup W]$ has a perfect matching
		for every $W\subseteq V(H)\setminus V(M_{\mathrm{abs}})$ with
		$|W\cap X|=|W\cap Y|=|W\cap Z|\le\mu q$.
	\end{lem}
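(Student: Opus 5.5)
I will mirror the non-partite absorbing argument (Claims 3--5 and the random selection in the proof of Lemma~\ref{lem:absnp}), using Lemma~\ref{lem:expand} in place of Claims 1 and 2. The connectors are now between two vertices of the same class, and absorbers are attached to balanced triples.

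\emph{Step 1: many $8$-connectors.} Fix distinct $u,v$ in $X$. I first count paths in $R_X$. Since $\delta(R_X)\ge0.49q$, either $|N(u)\cap N(v)|\ge\gamma q$, or the sets $N(u)\setminus N(v)$ and $N(v)\setminus N(u)$ both have at least $0.4q$ vertices. In the second case Lemma~\ref{lem:expand} gives at least $\kappa q^2-O(q)$ edges between them, and hence that many $u$--$v$ paths of length three. So there are at least $\gamma q^{\ell-1}$ paths of some length $\ell\in\{1,2,3\}$.

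Along such a path $u=p_0,\dots,p_\ell=v$, I choose common link pairs $B_j\in E(L_H(p_{j-1}))\cap E(L_H(p_j))$. Each is a $Y\times Z$ pair, with at least $\beta q^2/2$ choices after avoiding used vertices. This gives a set of $3\ell-1$ vertices with $\ell-1$ in $X$ and $\ell$ in each of $Y,Z$. I pad it with $3-\ell$ disjoint edges, which have $\Omega(q^3)$ choices since $e(H)\ge d_*q^3$. The result has $2$ vertices in $X$ and $3$ in $Y$ and in $Z$, as required. Dividing by multiplicity, this gives $\theta_0q^8$ connectors.

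\emph{Step 2: many absorbers.} Given a balanced triple $T=\{x,y,z\}$, I pick an edge $\{x',y',z\}$ through $z$, with $\Omega(q^2)$ choices, that avoids $x,y$. I then pick disjoint $8$-connectors $C_1$ for $(x,x')$ and $C_2$ for $(y,y')$. Set $A=\{x',y'\}\cup C_1\cup C_2$, which has $18$ vertices.

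Then $H[A]$ is matched by matchings of $C_1\cup\{x'\}$ and $C_2\cup\{y'\}$. Also $H[A\cup T]$ is matched by the edge $\{x',y',z\}$ together with matchings of $C_1\cup\{x\}$ and $C_2\cup\{y\}$. The class counts agree: $A$ has six vertices in each class, so $A$ is balanced. This gives $\omega' q^{18}$ absorbers per triple.

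\emph{Step 3: random selection.} I select each balanced $18$-set independently with probability $p=\theta q/(\#\text{balanced 18-sets})$. The Chernoff bounds and Markov's inequality on intersecting pairs then give, as in Lemma~\ref{lem:absnp}, the following:
\begin{itemize}
\item at most $2\theta q$ selected sets;
\item at least $\omega'\theta q/2$ absorbers for every one of the at most $q^3$ triples;
\item at most $O(\theta^2q)$ intersecting pairs.
\end{itemize}
Deleting the conflicts and the non-absorbing sets leaves disjoint sets, each with a perfect matching. Their union gives $M_{\rm abs}$, with at most $12\theta q\le\alpha q$ edges. A balanced $W$ with at most $\mu q=\omega'\theta q/8$ vertices per class splits into balanced triples, each assigned a distinct absorber greedily.

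\emph{Main obstacle.} I expect Step 1 to be the main obstacle. One issue is the path count when $u,v$ have small common neighbourhood: I must check that both private neighbourhoods really have at least $0.4q$ vertices, which holds because each has size at least $0.49q-\gamma q-1$. The other is checking that the padded connector has the right class profile for every $\ell$. Unlike the non-partite case there is no two-part split, because Lemma~\ref{lem:expand} rules out sparse cuts directly.
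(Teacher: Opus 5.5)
Your proposal is correct and follows essentially the same route as the paper: short paths in $R_X$ obtained from the minimum degree and the expansion in Lemma~\ref{lem:expand}, $8$-connectors built from common link pairs along the path and padded with disjoint edges, $18$-vertex absorbers consisting of an edge $\{x',y',z\}$ through the third vertex of $T$ plus two connectors, and a random selection followed by deletion of intersecting pairs. The remaining differences are cosmetic: you allow $\ell=1$, you sample all balanced $18$-sets and discard the non-matchable ones afterwards, and you use Chernoff rather than Markov for the family size.
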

	
	\begin{proof}
		\emph{Connectors.} We first show that for an absolute $\gamma>0$ every pair of
		distinct vertices in the same class has at least $\gamma q^8$ connectors of size
		eight. Take $u,v\in X$ and $R=R_X$. If $|N_R(u)\cap N_R(v)|\ge q/100$ there are at
		least $q/100$ paths of length two from $u$ to $v$; otherwise
		$A=N_R(u)\setminus(N_R(v)\cup\{v\})$ and $B=N_R(v)\setminus(N_R(u)\cup\{u\})$ are
		disjoint of size at least $0.49q-q/100-1\ge0.4q$, and Lemma~\ref{lem:expand} gives
		at least $\kappa q^2$ edges between them and hence that many paths of length three.
		Along such a path $x_0=u,x_1,\dots,x_\ell=v$ with $\ell\in\{2,3\}$, choose pairwise
		disjoint pairs $P_j\in E(L_H(x_{j-1}))\cap E(L_H(x_j))$; at most four vertices are
		forbidden at any step and each lies in at most $q$ pairs, so at least $\beta q^2/2$
		choices remain. The internal path vertices together with the pairs form a set of
		$3\ell-1$ vertices carrying the matchings $\{\{x_{j-1}\}\cup P_j\}$ on its union with
		$u$ and $\{\{x_j\}\cup P_j\}$ on its union with $v$. For $\ell=3$ this is an
		$8$-connector; for $\ell=2$ add any edge disjoint from the five vertices and from
		$u,v$, of which there are at least $d_*q^3/2$ by \eqref{eq:degstar} once $q$ is
		large. A fixed $8$-set arises at most $8!$ times, since the construction is encoded
		by an ordering of its vertices, so
		$\gamma=\tfrac1{8!}\min\{\tfrac1{100},\kappa\}\bigl(\tfrac12\min\{\beta,d_*\}\bigr)^3$
		works.
		
		\emph{Absorbers.} Fix a balanced triple $T=\{x_1,x_2,x_3\}$ and choose an edge
		$\{t_1,t_2,x_3\}$ with $t_1\ne x_1$ and $t_2\ne x_2$, of which there are at least
		$d_*q^2/2$; then choose disjoint $8$-connectors $S_1$ for $x_1,t_1$ and $S_2$ for
		$x_2,t_2$, avoiding $T\cup\{t_1,t_2\}$ and each other. At most a bounded number of
		vertices are forbidden and each lies in at most $(3q)^7$ eight-sets, so at least
		$\gamma q^8/2$ choices remain at each step. Put $A=S_1\cup S_2\cup\{t_1,t_2\}$; the
		matchings on $S_1\cup\{t_1\}$ and $S_2\cup\{t_2\}$ cover $A$, while those on
		$S_1\cup\{x_1\}$ and $S_2\cup\{x_2\}$ together with $\{t_1,t_2,x_3\}$ cover
		$A\cup T$. A fixed $18$-set arises at most $18!$ times, so every balanced triple has
		at least $a_0q^{18}$ absorbers with $a_0=d_*\gamma^2/(8\cdot18!)$.
		
		\emph{Selection.} Let $\mathcal F$ be the family of balanced $18$-sets $A$ for which
		$H[A]$ has a perfect matching, so $|\mathcal F|\le q^{18}$ and a fixed vertex lies in
		at most $q^{17}$ members, whence at most $9q^{35}$ unordered pairs of distinct
		intersecting members. Put
		$\vartheta=\min\{\tfrac12,a_0/144,\alpha/24\}$, $p=\vartheta q^{-17}$ and
		$\mu=a_0\vartheta/8$, and select each member of $\mathcal F$ independently with
		probability $p$, and let $\mathcal F_0$ be the selected family. The expected size is at most $\vartheta q$ and the expected number
		of intersecting pairs at most $9\vartheta^2q$, so by Markov each of the events
		$|\mathcal F_0|>4\vartheta q$ and $\#\{\text{intersecting pairs}\}>36\vartheta^2q$
		has probability at most $\tfrac14$; and for a fixed balanced triple the number
		$N_T$ of its selected absorbers has mean at least $a_0\vartheta q$, so
		$\Pr(N_T<a_0\vartheta q/2)\le\exp(-a_0\vartheta q/8)$ and a union bound over the
		$q^3$ triples leaves $o(1)$. Fix an outcome satisfying all these bounds, delete one
		member of each intersecting pair, and let $\mathcal F_1$ be the resulting pairwise
		disjoint family; every balanced triple retains at least
		$a_0\vartheta q/2-36\vartheta^2q\ge a_0\vartheta q/4=2\mu q$ absorbers, by
		$\vartheta\le a_0/144$. Let $M_{\mathrm{abs}}$ be the union of a perfect matching on
		each member, so $|M_{\mathrm{abs}}|=6|\mathcal F_1|\le24\vartheta q\le\alpha q$.
		
		Finally, given $W$ with $w=|W\cap X|\le\mu q$, partition it into balanced triples
		$T_1,\dots,T_w$ and assign to each an unused absorber in $\mathcal F_1$, which is
		possible since each triple has at least $2\mu q$ available and fewer than $w\le\mu q$
		have been used; every chosen absorber avoids $W$ because $W$ avoids
		$V(M_{\mathrm{abs}})$. Replacing the matching on each chosen $A_i$ by one on
		$A_i\cup T_i$ covers $V(M_{\mathrm{abs}})\cup W$.
	\end{proof}

	\subsection{The nonextremal case}\label{sec:nonextremal}
	
	\medskip\noindent\emph{The non-partite case.}
	For $n\in3\mathbb N$ and $\varepsilon>0$, call an $n$-vertex $3$-graph $H$
	\emph{$\varepsilon$-nonextremal} if
	\begin{equation}\label{eq:nonext}
		|E(H)\setminus T_i(A)|\ \ge\ \varepsilon n^3\qquad
		\text{for every }i\in\{1,2\}\text{ and every }A\subseteq V(H)\text{ with }|A|=in/3 .
	\end{equation}
	The distance counts edges outside a space barrier; it does not require the edges
	inside the barrier to be present. The stability theorem turns this into a robustness
	statement.
	
	\begin{lem}\label{lem:robustnp}
		For every $\varepsilon>0$ there are $\eta>0$ and $n_0$ such that the following
		holds. Let $n\ge n_0$ be divisible by $3$ and let $H$ be an $\varepsilon$-nonextremal
		$n$-vertex $3$-graph with $\sigma(H)>\tfrac{2n}3-2$. If $R\subseteq V(H)$ and $J$ is
		a subhypergraph of $H-R$ with $|R|\le\eta n$ and $\Delta_1(J)\le\eta n^2$, then
		$(H-R)-E(J)$ has a perfect fractional matching.
	\end{lem}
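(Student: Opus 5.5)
Put $F=(H-R)-E(J)$ and $N=n-|R|$. The plan is to apply Theorem~\ref{thm:fsnon} to $F$ with a small $\beta$ chosen in terms of $\varepsilon$, and derive a contradiction with \eqref{eq:nonext} if $F$ has no perfect fractional matching.

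\textbf{Step 1: $F$ satisfies the spectral hypothesis.} Fix $v\in V(F)$ and compare $L_F(v)$ with $L_H(v)$. The link loses two kinds of edges. Edges meeting $R$ number at most $|R|n\le\eta n^2$. Edges of $L_J(v)$ number $d_J(v)\le\eta n^2$. So at most $2\eta n^2$ edges are deleted in total. Lemma~\ref{lem:basic}(iii) then gives
\[
\rho(L_F(v))\ \ge\ \tfrac{2n}3-2-2\sqrt{\eta}\,n\ \ge\ \bigl(\tfrac23-\delta\bigr)N ,
\]
where $\delta=\delta(\beta)$ comes from Theorem~\ref{thm:fsnon}. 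This holds once $\eta$ is small compared with $\delta^2$ and $n$ is large; $N\le n$ only helps. So if $F$ has no perfect fractional matching, Theorem~\ref{thm:fsnon} yields $i\in\{1,2\}$ and sets $A,D\subseteq V(F)$ with $\bigl||A|-iN/3\bigr|\le\beta N$, $|D|\le\beta N$ and $E(F-D)\subseteq T_i(A)$.

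\textbf{Step 2: transfer the barrier back to $H$.} Adjust $A$ to a set $A^*\subseteq V(H)$ with $|A^*|=in/3$, adding or removing at most $\beta N+|R|\le(\beta+\eta)n$ vertices, possibly using vertices of $R$. Now count the edges of $H$ outside $T_i(A^*)$. Each such edge falls into one of three classes.
\begin{itemize}
\item[(a)] It meets $R\cup D\cup(A\triangle A^*)$. There are at most $(\eta+\beta+\beta+\eta)n\cdot n^2$ of these.
\item[(b)] It lies in $E(J)$. There are at most $\tfrac13 n\cdot\eta n^2$ of these.
\item[(c)] It is an edge of $F-D$ disjoint from $A\triangle A^*$. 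Then it lies in $T_i(A)$, and since $A$ and $A^*$ agree on it, it also lies in $T_i(A^*)$. So this class is empty.
\end{itemize}
Hence $|E(H)\setminus T_i(A^*)|\le(2\beta+3\eta)n^3<\varepsilon n^3$ once $\beta,\eta\le\varepsilon/10$. This contradicts \eqref{eq:nonext}.

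\textbf{Order of constants and the main obstacle.} The constants are fixed in this order: first $\beta=\varepsilon/10$, then $\delta$ and $N_0$ from Theorem~\ref{thm:fsnon}, then $\eta\le\min\{\varepsilon/10,\delta^2/16\}$, and finally $n_0$ large enough that $N\ge(1-\eta)n\ge N_0$ and the constant $2$ is absorbed. The only real point of care is Step 1. The perturbation must be measured per vertex, which is why the hypothesis is a bound on $\Delta_1(J)$ rather than on $e(J)$. The deletion of $R$ is also counted per link, at most $|R|$ pairs through each vertex of $R$. Step 2 is straightforward bookkeeping: rounding $|A|$ to exactly $in/3$ costs only $O(\beta n^3)$ edges.
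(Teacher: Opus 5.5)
Your proof is correct and follows the paper's argument. You apply Theorem~\ref{thm:fsnon} to $F$, round $A$ to a set $A^*$ of size $in/3$, and show that every edge of $H$ outside $T_i(A^*)$ either meets $R\cup D\cup(A\triangle A^*)$ or lies in $E(J)$. The only difference is cosmetic: in Step~1 you treat the removal of $R$ as deleting at most $\eta n^2$ link edges and use Lemma~\ref{lem:basic}(iii), whereas the paper uses the vertex-deletion bound of Lemma~\ref{lem:basic}(iv); this is why you need $\eta\le\delta^2/16$, which is harmless.
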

	
	\begin{proof}
		Since an $\varepsilon$-nonextremal hypergraph is $\varepsilon'$-nonextremal for
		$\varepsilon'\le\varepsilon$, we may assume $\varepsilon\le\tfrac1{100}$. Put
		$\beta=\varepsilon/10$ and let $\delta$ be given by Theorem~\ref{thm:fsnon} for this
		$\beta$; decreasing $\delta$ is harmless, so assume also $\delta<\tfrac16$. Choose
		\begin{equation}\label{eq:etachoice}
			0<\eta\le\min\{\varepsilon/10,\tfrac18\},\qquad 4\eta+\sqrt{2\eta}<\delta/2 ,
		\end{equation}
		and take $n_0$ large in terms of these constants, with $n_0\ge12$, $2/n_0\le\delta/2$
		and $(1-\eta)n_0$ above the order threshold of Theorem~\ref{thm:fsnon}.
		
		Let $F=(H-R)-E(J)$ and $N=n-|R|$. For $v\in V(F)$ the link $L_H(v)$ has at most $n$
		vertices and $\rho(L_H(v))>\tfrac{2n}3-2\ge\tfrac n2$, so deleting $R$ costs at most
		$4|R|$ by the second case of Lemma~\ref{lem:basic}(iv), and deleting the at most $\eta n^2$ link edges
		contributed by $J$ costs at most $\sqrt{2\eta}\,n$ by Lemma~\ref{lem:basic}(iii). Hence
		\begin{equation}\label{eq:sigmaF}
			\sigma(F)>\frac{2n}3-2-4\eta n-\sqrt{2\eta}\,n\ \ge\ \Bigl(\frac23-\delta\Bigr)n
			\ \ge\ \Bigl(\frac23-\delta\Bigr)N ,
		\end{equation}
		the first inequality by \eqref{eq:etachoice} and $2/n\le\delta/2$, the second by
		$N\le n$.
		
		Suppose $F$ has no perfect fractional matching. Theorem~\ref{thm:fsnon} gives
		$i\in\{1,2\}$ and $A,D\subseteq V(F)$ with
		\begin{equation}\label{eq:ADnp}
			\bigl||A|-iN/3\bigr|\le\beta N,\qquad|D|\le\beta N,\qquad E(F-D)\subseteq T_i(A).
		\end{equation}
		Enlarge or reduce $A$ to a set $A^*\subseteq V(H)$ of size $in/3$, changing exactly
		$\bigl||A|-in/3\bigr|$ vertices, so that
		\begin{equation}\label{eq:Astar}
			|A\,\triangle\,A^*|=\bigl||A|-in/3\bigr|\le\bigl||A|-iN/3\bigr|+i(n-N)/3\le(\beta+\eta)n .
		\end{equation}
		Let $e\in E(H)\setminus T_i(A^*)$. If $e\notin E(J)$ and $e$ avoids
		$R\cup D\cup(A\,\triangle\,A^*)$, then $e\in E(F-D)$, and every vertex of $e$ has the
		same membership in $A$ as in $A^*$, so $|e\cap A|=|e\cap A^*|<i$, contradicting
		\eqref{eq:ADnp}. Every such edge therefore lies in $E(J)$ or meets
		$R\cup D\cup(A\,\triangle\,A^*)$. A fixed vertex lies in at most $\binom{n-1}2$
		triples, and counting incidences gives
		$3|E(J)|=\sum_{v}d_J(v)\le N\eta n^2$, so by \eqref{eq:ADnp} and \eqref{eq:Astar},
		\begin{align*}
			|E(H)\setminus T_i(A^*)|
			&\le \bigl(|R|+|D|+|A\triangle A^*|\bigr)
			\binom{n-1}{2}+|E(J)|\\
			&\le (2\beta+2\eta)n\cdot\frac{n^2}{2}
			+\frac{\eta n^3}{3}\\
			&=\left(\beta+\frac{4\eta}{3}\right)n^3
			\le \frac{7\varepsilon}{30}n^3
			<\varepsilon n^3.
		\end{align*}
		As $|A^*|=in/3$, this contradicts \eqref{eq:nonext}.
	\end{proof}
	
	For a nonnegative function $f$ on the edges of a hypergraph write
	$\mathrm{supp}(f)=\{e:f(e)>0\}$. The next lemma turns robustness into an almost
	perfect matching, following the packing and rounding argument
of~\cite[Section~4]{LY}; it is stated so that it applies verbatim in both settings, the
	partite version being obtained by reading ``uncovered vertices'' class by class.
	
	\begin{lem}\label{lem:almost}
		For every $\eta>0$ and $\zeta>0$ there is $N_0$ such that the following holds. Let
		$F$ be an $N$-vertex $3$-graph with $N\ge N_0$ such that $F-E(J)$ has a perfect
		fractional matching whenever $J\subseteq F$ and $\Delta_1(J)\le\eta N^2$. Then $F$
		has a matching leaving at most $\zeta N$ vertices uncovered.
	\end{lem}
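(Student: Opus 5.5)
We will follow the standard ``fractional matching plus random sampling plus nibble'' route, in the form used by Alon, Frankl, Huang, R\"odl, Ruci\'nski and Sudakov and adapted in~\cite[Section~4]{LY}. The key point is that the robustness hypothesis lets us delete any sparse set of edges and still keep a perfect fractional matching. We use this to make the supporting fractional matchings spread out, so that a random reduction yields an almost regular hypergraph with small codegree, to which the Frankl--R\"odl theorem applies.

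\emph{Step 1: a spread fractional matching.} We build $J$ iteratively. Start with $J_0=\emptyset$ and a perfect fractional matching $f_0$ of $F$. Whenever some vertex $v$ has $\sum_{e\ni v,\,f(e)\ge N^{-1-\xi}}1$ too large, or more generally whenever the support of the current fractional matching is concentrated, delete edges of large weight into $J$ while keeping $\Delta_1(J)\le\eta N^2$. A cleaner version: let $F_1=F$, take a perfect fractional matching $f_1$ of $F_1$, and put into $J$ all edges with $f_1(e)>N^{-2}/\eta'$. Each vertex has total weight $1$, so it loses at most $\eta' N^2$ such edges. Repeat with $F_2=F-E(J)$, and iterate $K=K(\eta,\zeta)$ times, always keeping $\Delta_1(J)\le\eta N^2$. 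Averaging $f=\tfrac1K\sum_k f_k$ gives a perfect fractional matching with $f(e)\le \eta'^{-1}N^{-2}$ for every edge. Since every vertex has weight exactly one, $f$ is spread over $\Omega(N^2)$ edges at each vertex.

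\emph{Step 2: random reduction to a nibble-ready hypergraph.} Discretize $f$: pick a large constant $L$ and form a multi-hypergraph $G$ by taking each edge $e$ independently $\mathrm{Bin}(m,\,f(e)\eta' N^2/ \ldots)$ times, normalized so that the expected degree of every vertex is $D\approx cN$. Equivalently, select each edge of $\mathrm{supp}(f)$ independently with probability proportional to $f(e)N^{2}\cdot(D/N^2)$. The uniform bound on $f(e)$ makes these probabilities at most $1$. Since $f$ is perfect, $\mathbb E\,d_G(v)=D(1+o(1))$ exactly. Chernoff bounds plus a union bound give $(1-\tau)D<d_G(v)<(1+\tau)D$ for all $v$ once $D\gg\log N$. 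For the codegree, a pair $u,v$ lies in at most $N$ triples, each selected with probability $O(D/N^2)$. Hence $\mathbb E\,\mathrm{codeg}=O(D/N)$, and Chernoff gives $\Delta_2(G)\le\tau D$ with high probability. Choose $D=N^{1/2}$, say, or $D=\epsilon N$ depending on convenience; both satisfy $d_0\le D\le N$.

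\emph{Step 3: nibble.} Apply the Frankl--R\"odl theorem to $G$ with $\zeta$. It gives a matching leaving at most $\zeta N$ vertices uncovered, and this is a matching of $F$ since $G\subseteq F$.

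The main obstacle is Step 1. A perfect fractional matching may be concentrated on few edges; for instance, a vertex can put all its weight on one edge. In that case the random selection either exceeds probability one or has codegree comparable to $D$. Repeated deletion via the robustness hypothesis is exactly what rules this out. The delicate point is to check that the total deletions per vertex stay below $\eta N^2$ across all $K$ rounds, which we arrange by choosing the threshold $\eta'=\eta/K$. For the partite version the same argument runs with degrees measured class by class, and the uncovered set is automatically balanced.
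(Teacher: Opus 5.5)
There is a genuine gap in Step 1, and Steps 2 and 3 depend on it.

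\textbf{The weight bound does not hold.} Deleting the edges with $f_k(e)>N^{-2}/\eta'$ only removes them from the later hypergraphs $F_{k+1},\dots,F_K$. They stay in the support of $f_k$ itself with their large weights. The last matching $f_K$ is not constrained at all and may even be integral. Each edge is heavy in at most one round, so all you get is $f(e)\le 1/K+K/(\eta N^2)$. With $K=K(\eta,\zeta)$ this is a constant, not $O(N^{-2})$.

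Consequently $p_e=Df(e)\le1$ forces $D\le K$. With bounded $D$, the Chernoff bound plus a union bound over $N$ vertices cannot give $(1\pm\tau)D$ at every vertex.

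\textbf{More rounds do not fix it.} The budget $K\eta'\le\eta$ caps what the scheme guarantees at roughly $\max\{1/K,\,K/(\eta N^2)\}$, which is of order $1/N$ at best (take $K\approx\sqrt\eta\,N$). With weights of order $1/N$, a pair lying in $\Theta(N)$ support edges can still carry fractional load $\Theta(1)$. So the expected codegree can be $\Theta(D)$, and your estimate $\mathbb E\,\mathrm{codeg}=O(D/N)$, which relied on $f(e)=O(N^{-2})$, is unavailable. The delicate point is not the per-vertex deletion budget but the codegree. Uniform edge-weight bounds are the wrong quantity to control.

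\textbf{What is missing: direct control of pair loads, with the number of rounds tending to infinity.} The paper does this as follows.
\begin{enumerate}
\item[(a)] It takes $r=\lfloor N/\log N\rfloor$ perfect fractional matchings $f_1,\dots,f_r$. Each is an extreme point, so its support has at most $N$ edges.
\item[(b)] Before choosing $f_{s+1}$, it uses the robustness hypothesis to delete all edges of earlier supports. It also deletes every edge containing a pair $xy$ whose accumulated load $\ell_s(xy)=\sum_{j\le s}\sum_{e\supseteq\{x,y\}}f_j(e)$ exceeds $2$.
\item[(c)] Since $\sum_{y}\ell_s(xy)=2s$, each vertex lies in at most $s$ overloaded pairs. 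So the deleted family has maximum degree at most $\tfrac52 sN\le 3N^2/\log N\le\eta N^2$.
\item[(d)] Then $h=\sum_j f_j$ satisfies $h\le 1$ because the supports are disjoint. Its vertex sums are exactly $r$, and its pair sums are at most $3$.
\item[(e)] Retaining each edge with probability $h(e)$ gives degrees concentrated at $r\to\infty$ and codegrees below $\sqrt N=o(r)$, so Frankl--R\"odl applies.
\end{enumerate}
Your Step 3 is fine once such an $h$ is available; what has to change is how the fractional matchings are built and combined.
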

	
	\begin{proof}
		\emph{Small support.} Every hypergraph with a perfect fractional matching has one
		whose support has at most as many edges as the hypergraph has vertices. Indeed, let
		$B$ be the vertex--edge incidence matrix; the set $P=\{f\ge0:Bf=\mathbf1\}$ is
		nonempty and bounded, since $f(e)\le1$ on it, so it has an extreme point $f$. If
		the columns of $B$ indexed by $\mathrm{supp}(f)$ were dependent, there would be
		$d\ne0$ vanishing off $\mathrm{supp}(f)$ with $Bd=0$, and $f\pm td$ would lie in
		$P$ for small $t>0$.
		
		\emph{Packing.} Put $r=\lfloor N/\log N\rfloor$ and take $N_0$ so large that
		$\log N\ge3/\eta$ for $N\ge N_0$. We construct perfect fractional matchings
		$f_1,\dots,f_r$ with pairwise disjoint supports, each of size at most $N$, such that
		\begin{equation}\label{eq:load}
			\ell(xy):=\sum_{j\le r}\ \sum_{e\supseteq\{x,y\}}f_j(e)\ \le\ 3
			\qquad\text{for all distinct }x,y .
		\end{equation}
		Suppose $f_1,\dots,f_s$ have been found with $s<r$ and the partial loads
		$\ell_s(xy)$ at most $3$, and put $Q_s=\{xy:\ell_s(xy)>2\}$. Since
		$\sum_{y\ne x}\ell_s(xy)=2s$ and every edge of $Q_s$ at $x$ carries load more than
		two, $d_{Q_s}(x)\le s$ and $|E(Q_s)|\le sN/2$. Let $J_s$ consist of the edges of $F$
		containing an edge of $Q_s$ together with those in the earlier supports; an edge at
		$v$ containing a pair of $Q_s$ at $v$ is determined by that pair and one further
		vertex, one containing a pair of $Q_s$ away from $v$ by that pair, and the earlier
		supports have at most $sN$ edges, so
		\[
		d_{J_s}(v)\le d_{Q_s}(v)N+|E(Q_s)|+sN\le\tfrac52sN\le3rN\le\frac{3N^2}{\log N}\le\eta N^2 .
		\]
		By hypothesis $F-E(J_s)$ has a perfect fractional matching, and by the first
		paragraph one may take its support to have at most $N$ edges; extended by zero it is
		disjoint from the earlier supports. If $\ell_s(xy)>2$ then every edge containing
		$\{x,y\}$ was deleted and the load is unchanged, and otherwise the added load is at
		most $\sum_{e\ni x}f_{s+1}(e)=1$, so \eqref{eq:load} persists.
		
		\emph{Rounding.} Put $h=\sum_jf_j$, so that $0\le h\le1$, $\sum_{e\ni v}h(e)=r$ for
		every $v$, and $\sum_{e\supseteq\{x,y\}}h(e)\le3$ for every pair. Retain each edge
		$e$ independently with probability $h(e)$, giving $F_0$. Each $d_{F_0}(v)$ is a sum
		of independent Bernoulli variables of mean $r$, so the multiplicative Chernoff bound
		with deviation $1/\log N$ and a union bound over the $N$ vertices leave failure
		probability $2N\exp(-r/(3\log^2N))=o(1)$, because $r\ge N/(2\log N)$ for $N$ large.
		For a fixed pair, $d_{F_0}(xy)$ is a sum of independent Bernoulli variables of mean
		at most three; with $t_N=\lceil\sqrt N\rceil$, expanding
		$\mathbb E\binom{d_{F_0}(xy)}{t_N}$ and keeping the terms with distinct indices gives
		$\Pr(d_{F_0}(xy)\ge t_N)\le(3e/t_N)^{t_N}$, and
		$N^2(3e/t_N)^{t_N}=o(1)$ since $2\log N+t_N\log(3e/t_N)\to-\infty$. So for $N$ large
		some outcome has
		\[
		\Bigl(1-\tfrac1{\log N}\Bigr)r\le d_{F_0}(v)\le\Bigl(1+\tfrac1{\log N}\Bigr)r,
		\qquad \Delta_2(F_0)<t_N .
		\]
		Since $r\to\infty$, $1/\log N\to0$ and $t_N/r\to0$, enlarging $N_0$ makes these
		bounds meet the hypotheses of the Frankl--R\"odl theorem with $k=3$, $D=r$ and the
		given $\zeta$, which supplies the required matching.
	\end{proof}
	
	\begin{thm}\label{thm:nonextnp}
		For every $\varepsilon>0$ there is $n_0$ such that every $\varepsilon$-nonextremal
		$n$-vertex $3$-graph $H$ with $n\ge n_0$, $3\mid n$ and $\sigma(H)>\tfrac{2n}3-2$ has
		a perfect matching.
	\end{thm}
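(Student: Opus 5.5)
The plan is the standard absorption scheme: Lemma~\ref{lem:absnp} supplies an absorbing set, Lemmas~\ref{lem:robustnp} and~\ref{lem:almost} give an almost perfect matching of what remains, and the absorber swallows the leftover. Given $\varepsilon$, let $\eta_1,n_1$ come from Lemma~\ref{lem:robustnp}. Put $\mu=\eta_1/2$ and let $\eta_2,n_2$ come from Lemma~\ref{lem:absnp} for this $\mu$; we may assume $\eta_2\le\mu$. Finally let $N_0$ come from Lemma~\ref{lem:almost} with parameters $\eta=\eta_1/2$ and $\zeta=\eta_2/2$, and take $n_0$ large compared with all of these. Since $\sigma(H)>\tfrac{2n}3-2\ge\tfrac{33}{50}n$ for large $n$, Lemma~\ref{lem:absnp} produces $A\subseteq V(H)$ with $|A|\le\mu n$, $3\mid|A|$, and the absorbing property for every $W$ with $|W|\le\eta_2n$ and $3\mid|W|$.

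Next set $F=H-A$, with $N=n-|A|\ge n/2$. We check the hypothesis of Lemma~\ref{lem:almost} for $F$. Let $J\subseteq F$ with $\Delta_1(J)\le\tfrac{\eta_1}2N^2\le\eta_1n^2$. Since $|A|\le\mu n\le\eta_1n$, Lemma~\ref{lem:robustnp} with $R=A$ shows that $(H-A)-E(J)=F-E(J)$ has a perfect fractional matching. Lemma~\ref{lem:almost} then gives a matching $M$ in $F$ whose uncovered set $W$ satisfies $|W|\le\zeta N\le\eta_2n$. Moreover $3\mid n$, $3\mid|A|$ and $3\mid|V(M)|$, so $3\mid|W|$. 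The absorbing property gives a perfect matching of $H[A\cup W]$, and together with $M$ this is a perfect matching of $H$.

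I do not expect a serious obstacle, since each ingredient is already in place. The steps needing care are the following.
\begin{enumerate}
\item[(a)] Lemma~\ref{lem:robustnp} must be applied to $H$ itself with the removed set $R=A$, not to $F$. This matters because nonextremality is a property of $H$ and need not pass to $F$.
\item[(b)] The hypothesis of Lemma~\ref{lem:almost} is normalised by $N^2$, whereas that of Lemma~\ref{lem:robustnp} uses $n^2$. Since $N\le n$, choosing $\eta\le\eta_1$ in Lemma~\ref{lem:almost} handles this.
\item[(c)] The constants must be chosen in the order $\eta_1$, then $\mu$, then $\eta_2$, then $\zeta$, so that $|A|$ fits the robustness budget and the leftover $W$ fits the absorbing budget.
\end{enumerate}
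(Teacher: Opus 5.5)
Your proposal is correct and is essentially the paper's proof. Both arguments build an absorbing set via Lemma~\ref{lem:absnp}. Both then apply Lemma~\ref{lem:robustnp} to $H$ itself with $R$ the absorbing set, using $N\le n$ to meet the $N^2$-normalised hypothesis of Lemma~\ref{lem:almost}, and finally absorb the leftover, whose size is divisible by three. The only differences are cosmetic choices of constants: you take $\mu=\eta_1/2$ and $\eta_1/2$ in Lemma~\ref{lem:almost}, whereas the paper uses $\mu=\eta$ and $\eta$ directly.
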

	
	\begin{proof}
		Let $\eta$ be given by Lemma~\ref{lem:robustnp}, and assume $0<\eta<\tfrac18$. Apply
		Lemma~\ref{lem:absnp} with $\mu=\eta$ and let $\xi$ be the resulting constant: for
		large order and $\sigma(H)\ge0.66n$ there is $U\subseteq V(H)$ with $|U|\le\eta n$
		such that $H[U]$ has a perfect matching and $H[U\cup S]$ has one for every
		$S\subseteq V(H)\setminus U$ with $|S|\le\xi n$ and $3\mid|S|$. Take $n_0$ large
		enough for Lemmas~\ref{lem:robustnp} and~\ref{lem:absnp}, for
		$\tfrac{2n}3-2\ge0.66n$, and so that $(1-\eta)n_0$ exceeds the threshold of
		Lemma~\ref{lem:almost} with parameters $\eta$ and $\zeta=\xi/2$.
		
		Put $F=H-U$ and $N=n-|U|$; then $3\mid|U|$, so $3\mid N$, and $N\ge(1-\eta)n$. By
		Lemma~\ref{lem:robustnp} with $R=U$, the hypergraph $F-E(J)$ has a perfect fractional
		matching for every $J\subseteq F$ with $\Delta_1(J)\le\eta n^2$, and $N\le n$ gives
		$\eta N^2\le\eta n^2$, so Lemma~\ref{lem:almost} gives a matching $M$ in $F$ whose
		uncovered set $S$ satisfies
		$|S|\le\xi N/2\le\xi n$; and $|S|=N-3|M|$ is divisible by $3$. The defining property
		of $U$ gives a perfect matching of $H[U\cup S]$, which together with $M$ covers
		$V(H)$.
	\end{proof}

	\medskip\noindent\emph{The partite case.}
	Call a $q$-balanced $3$-partite $H$ \emph{$\varepsilon$-extremal} if there are
	distinct $i,j$ and sets $A_i\subseteq V_i$, $A_j\subseteq V_j$ with
	$\bigl||A_i|-q/2\bigr|\le\varepsilon q$ and $\bigl||A_j|-q/2\bigr|\le\varepsilon q$
	such that at most $\varepsilon q^3$ edges of $H$ avoid $A_i\cup A_j$, and
	\emph{$\varepsilon$-nonextremal} otherwise. As in the non-partite setting the
	definition counts edges outside a barrier and says nothing about edges inside it.

	\begin{lem}\label{lem:robust}
		For every $0<\varepsilon<\tfrac1{10}$ there are $\delta>0$, $\eta>0$ and $q_0$ such
		that the following holds. Let $H$ be an $\varepsilon$-nonextremal $q$-balanced
		$3$-partite $3$-graph with $q\ge q_0$ and
		$\sigma(H)\ge\bigl(\tfrac1{\sqrt2}-\delta\bigr)q$. If $U\subseteq V(H)$ contains the
		same number $s\le\eta q$ of vertices from each class and $J$ is a subhypergraph of
		$H-U$ with $\Delta_1(J)\le\eta q^2$, then $(H-U)-E(J)$ has a perfect fractional
		matching.
	\end{lem}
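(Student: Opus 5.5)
We follow the template of Lemma~\ref{lem:robustnp}, with Theorem~\ref{thm:target} in place of Theorem~\ref{thm:fsnon}. Given $\varepsilon$, put $\varepsilon'=\varepsilon/10$ and let $\delta'$ and $q_0'$ be supplied by Theorem~\ref{thm:target} for $\varepsilon'$. Choose $\delta=\delta'/2$, and choose $\eta$ so small that $8\eta+\sqrt{\eta}<\delta'/4$ and $\eta\le\varepsilon/20$. Take $q_0$ large.

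Let $F=(H-U)-E(J)$. It is $q'$-balanced $3$-partite with $q'=q-s\ge(1-\eta)q$, since $U$ takes the same number of vertices from every class. Fix a vertex $v$ of $F$. Its link in $H$ is bipartite on $2q$ vertices with spectral radius at least $(\tfrac1{\sqrt2}-\delta)q\ge q/2=N/4$, where $N=2q$. Deleting the at most $2s$ vertices of $U$ in the two relevant classes therefore costs at most $16s$ by the last case of Lemma~\ref{lem:basic}(iv). Deleting the at most $\eta q^2$ link edges coming from $J$ costs at most $\sqrt{\eta}\,q$ by the bipartite case of Lemma~\ref{lem:basic}(iii). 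Hence
\[
\sigma(F)\ge\Bigl(\tfrac1{\sqrt2}-\delta-16\eta-\sqrt\eta\Bigr)q\ge\Bigl(\tfrac1{\sqrt2}-\delta'\Bigr)q',
\]
using $q'\le q$ and the choice of $\eta$. The constant $16\eta$ requires $16\eta+\sqrt\eta<\delta'/2$, which is a harmless adjustment.

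Suppose $F$ has no perfect fractional matching. Theorem~\ref{thm:target}, applied with any minimum cover, gives indices $i\ne j$ and sets $A_i\subseteq V_i\setminus U$, $A_j\subseteq V_j\setminus U$ and $D$ in the third class of $F$ with the following properties: $||A_i|-q'/2|\le\varepsilon'q'$ and the same for $A_j$; $|D|\le\varepsilon'q'$; and every edge of $F$ meets $A_i\cup A_j\cup D$. These sets are already within $(\varepsilon'+\eta)q\le\varepsilon q$ of $q/2$ in size when regarded inside $H$, so no resizing is needed.

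Now count the edges of $H$ avoiding $A_i\cup A_j$. Each such edge either
\begin{itemize}
\item[(a)] meets $U$, giving at most $3s\cdot q^2\le3\eta q^3$ edges; or
\item[(b)] lies in $J$, giving at most $q'\cdot\eta q^2$ edges, by counting through a vertex of the third class; or
\item[(c)] is an edge of $F$ avoiding $A_i\cup A_j$, and hence meets $D$, giving at most $|D|q^2\le\varepsilon'q^3$ edges.
\end{itemize}
The total is at most $(4\eta+\varepsilon/10)q^3<\varepsilon q^3$. This says $H$ is $\varepsilon$-extremal, a contradiction, so $F$ has a perfect fractional matching.

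The main point to check is the spectral bookkeeping in the second paragraph. The vertex-deletion estimate must use the bipartite host on $2q$ vertices, so that the hypothesis $\rho\ge N/4$ of Lemma~\ref{lem:basic}(iv) holds. The edge-deletion estimate must use the bipartite form $\sqrt k$ of Lemma~\ref{lem:basic}(iii). Everything after that is a direct count.
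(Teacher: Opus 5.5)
Your proposal is correct and follows the paper's proof essentially step by step. It uses the same spectral bookkeeping for the links of $F$ via Lemma~\ref{lem:basic}(iii),(iv), the same application of Theorem~\ref{thm:target} to $F$ with no resizing of $A_i,A_j$, and the same three-way count of edges avoiding $A_i\cup A_j$. The paper uses the general form $2|R|N/\rho$ of Lemma~\ref{lem:basic}(iv) to get $12s$ instead of $16s$, and $\varepsilon/4$ instead of $\varepsilon/10$.

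You should say explicitly that $\delta'$ may be shrunk (the paper takes it $\le10^{-2}$) so that $(\tfrac1{\sqrt2}-\delta)q\ge q/2$ holds. Your stated condition $8\eta+\sqrt\eta<\delta'/4$ already implies $16\eta+\sqrt\eta<\delta'/2$, so no further adjustment of $\eta$ is needed.
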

	
	\begin{proof}
		Put $\alpha=\varepsilon/4$, and let $\beta$ and $Q$ be the
		spectral tolerance and order bound supplied by
		Theorem~\ref{thm:target} for this $\alpha$.
		Decreasing $\beta$ if necessary, assume that
		$0<\beta\le10^{-2}$. Choose
		\[
		\delta=\tfrac\beta2,\qquad
		0<\eta\le\tfrac\varepsilon8\ \text{ with }\ 12\eta+\sqrt\eta\le\tfrac\beta2,
		\]
		and $q_0$ so large that $(1-\eta)q_0\ge Q$.
		
		Let $F=(H-U)-E(J)$, whose classes have the common size $m=q-s\ge(1-\eta)q$. For
		$v\in V(F)$ the link $L_H(v)$ has at most $N=2q$ vertices and spectral radius at
		least $(\tfrac1{\sqrt2}-\delta)q>0.7q$, so deleting the $2s$ vertices of $U$ lying in
		it costs at most $2\cdot2s\cdot2q/(0.7q)<12s\le12\eta q$ by
		Lemma~\ref{lem:basic}(iv), and deleting the at most $\eta q^2$ link edges
		contributed by $J$ costs at most $\sqrt\eta\,q$ by Lemma~\ref{lem:basic}(iii), the
		link being bipartite. Hence
		\[
		\sigma(F)\ \ge\ \Bigl(\tfrac1{\sqrt2}-\delta-12\eta-\sqrt\eta\Bigr)q
		\ \ge\ \Bigl(\tfrac1{\sqrt2}-\beta\Bigr)q\ \ge\ \Bigl(\tfrac1{\sqrt2}-\beta\Bigr)m .
		\]
		
		Suppose $F$ has no perfect fractional matching. Theorem~\ref{thm:target} applied to
		$F$ gives distinct classes and sets $A_i,A_j$ in them with
		$\bigl||A_i|-\tfrac m2\bigr|,\bigl||A_j|-\tfrac m2\bigr|\le\alpha m$, together with
		$D$ of size at most $\alpha m$, such that every edge of $F-D$ meets $A_i\cup A_j$.
		Regarding $A_i$ and $A_j$ as subsets of $V_i$ and $V_j$, their sizes differ from
		$q/2$ by at most $\alpha m+s\le(\alpha+\eta)q<\varepsilon q$. An edge of $H$ avoiding
		$A_i\cup A_j$ either meets $U\cup D$ or lies in $E(J)$; a vertex lies in at most
		$q^2$ legal triples, and $3e(J)=\sum_vd_J(v)\le3q\cdot\eta q^2$, so the number of
		such edges is at most
		\[
		\bigl(3s+|D|\bigr)q^2+e(J)\ \le\ \bigl(3\eta+\alpha+\eta\bigr)q^3
		\ =\ \bigl(4\eta+\alpha\bigr)q^3\ <\ \varepsilon q^3 ,
		\]
		using $\eta\le\varepsilon/8$ and $\alpha=\varepsilon/4$. So $H$ is
		$\varepsilon$-extremal, a contradiction.
	\end{proof}
	
	The passage from robustness to an almost perfect matching is
	Lemma~\ref{lem:almost}, applied class by class: a matching of a $3$-partite
	$3$-graph leaves the same number of vertices uncovered in each class, so a bound on
	the total number uncovered is a bound in each class.
	
	\begin{thm}\label{thm:nonextpart}
		For every $0<\varepsilon<\tfrac1{10}$ there are $\delta>0$ and $q_0$ such that
		every $\varepsilon$-nonextremal $q$-balanced $3$-partite $3$-graph $H$ with
		$q\ge q_0$ and $\sigma(H)\ge(\tfrac1{\sqrt2}-\delta)q$ has a perfect matching.
	\end{thm}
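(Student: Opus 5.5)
The proof follows the non-partite argument of Theorem~\ref{thm:nonextnp} step by step, with Lemmas~\ref{lem:abs}, \ref{lem:robust} and~\ref{lem:almost} in place of their non-partite counterparts.

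\emph{Constants.} Given $\varepsilon$, Lemma~\ref{lem:robust} supplies $\delta_1$, $\eta$ and a threshold $q_1$. Put $\delta=\min\{\delta_1,\delta_*\}$, so that the spectral hypothesis also meets Lemma~\ref{lem:abs}. Apply Lemma~\ref{lem:abs} with $\alpha=\eta/3$, which yields $\mu$ and a matching $M_{\mathrm{abs}}$ with at most $\eta q/3$ edges. Let $U=V(M_{\mathrm{abs}})$. Then $U$ contains exactly $|M_{\mathrm{abs}}|\le\eta q$ vertices of each class, since every edge of a $3$-partite $3$-graph meets each class once.

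\emph{Robustness and an almost perfect matching.} Put $F=H-U$, a balanced $3$-partite $3$-graph with classes of size $m\ge(1-\eta)q$ and $N=3m$ vertices. By Lemma~\ref{lem:robust}, $F-E(J)$ has a perfect fractional matching for every $J\subseteq F$ with $\Delta_1(J)\le\eta q^2$. This holds in particular whenever $\Delta_1(J)\le(\eta/9)N^2$, because $N\le3q$. The hypothesis of Lemma~\ref{lem:almost} therefore holds with $\eta/9$ in place of $\eta$. Take $\zeta=\mu/6$, and take $q_0$ large enough that $N\ge(1-\eta)3q_0$ exceeds the threshold of that lemma. Lemma~\ref{lem:almost} then gives a matching $M$ of $F$ leaving a set $W$ of at most $\zeta N\le\mu q/2\cdot$ vertices uncovered.

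\emph{Balancing and absorbing.} Every edge of $M$ uses one vertex from each class, and the classes of $F$ are equal in size. Hence $W$ has the same number $|W|/3\le\mu q$ of vertices in each class. Lemma~\ref{lem:abs} gives a perfect matching of $H[U\cup W]$, and together with $M$ this is a perfect matching of $H$.

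The only point needing care is the bookkeeping between the scales $q$ and $N=3m$. Two facts must be checked. First, the degree bound $\eta q^2$ required by Lemma~\ref{lem:robust} must be implied by the bound $\eta' N^2$ used in Lemma~\ref{lem:almost}; this is why $\eta'=\eta/9$ works. Second, the leftover $W$ must be balanced, which holds automatically here. I expect no real obstacle beyond these two checks: the substantive work was done in Theorem~\ref{thm:target} and Lemma~\ref{lem:robust}.
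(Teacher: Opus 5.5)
Your proposal is correct and follows the paper's proof essentially step for step. The ingredients are the same: an absorbing matching from Lemma~\ref{lem:abs}, robustness of $F=H-V(M_{\mathrm{abs}})$ via Lemma~\ref{lem:robust}, Lemma~\ref{lem:almost} applied with $\eta/9$ to absorb the rescaling $N=3m\le3q$, and balance of the leftover set because every edge meets each class once. Only your constants differ, harmlessly: you take $\alpha=\eta/3$ and $\zeta=\mu/6$ where the paper takes $\eta/2$ and $\mu/2$.
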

	
	\begin{proof}
		Let $\eta$ be given by Lemma~\ref{lem:robust} for this $\varepsilon$, and apply
		Lemma~\ref{lem:abs} with $\alpha=\eta/2$ to obtain $\mu>0$ and an absorbing matching
		$M_{\mathrm{abs}}$ with at most $\eta q/2$ edges, shrinking $\delta$ so that
		$\delta\le\delta_*$. Put $F=H-V(M_{\mathrm{abs}})$, whose classes have the common
		size $m=q-|M_{\mathrm{abs}}|\ge(1-\eta)q$. Deleting the same number of vertices from
		each class, Lemma~\ref{lem:robust} shows that $F-E(J)$ has a perfect fractional
		matching for every $J\subseteq F$ with $\Delta_1(J)\le\eta q^2$, so
		Lemma~\ref{lem:almost}, applied with $\eta/9$ in place of $\eta$ and with
		$\zeta=\mu/2$, gives a matching in $F$ whose uncovered set $W$ has
		$|W\cap V_i|\le\mu q$ for each $i$; the hypothesis is met because $F$ has $N=3m$
		vertices with $m\le q$, so $\tfrac\eta9N^2=\eta m^2\le\eta q^2$; the three numbers are equal because
		$F$ is balanced and every edge meets each class once. The defining property of
		$M_{\mathrm{abs}}$ turns $V(M_{\mathrm{abs}})\cup W$ into a perfect matching, which
		together with the matching in $F$ covers $V(H)$.
	\end{proof}
	
	\subsection{The extremal case}\label{sec:extremal}
	
	\medskip\noindent\emph{The non-partite case.}
	Here the order of the hypergraph is $3m$. If $A,B$ partition the vertex set, an
	\emph{$AAB$ triple} is one with exactly two vertices in $A$ and one in $B$, and
	$AAA$, $ABB$, $BBB$ are read the same way; when the partition changes we say which
	one a type refers to. We next prove three lemmas for the non-partite extremal case.
	
	\begin{lem}\label{lem:matchdel}
		Let $H$ be a $3$-graph on $3m$ vertices with $\sigma(H)>2m-2$. If $k\ge1$ and
		$m\ge30k$, then $H-C$ has a matching of size $k$ for every $C\subseteq V(H)$ of
		order $m-k$.
	\end{lem}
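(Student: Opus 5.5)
The plan is to argue by contradiction from a maximum matching. Let $C\subseteq V(H)$ with $|C|=m-k$, and suppose a maximum matching $M$ of $H-C$ has $s\le k-1$ edges. Put $U=V(H)\setminus(C\cup V(M))$, so that
\[
|U|=2m+k-3s\ \ge\ 2m-2k+3 .
\]
By maximality, no edge of $H$ lies inside $U$. Hence for every $v\in U$, every edge of the link $L_H(v)$ meets $W=C\cup V(M)$.

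The crude bound is not enough. It only places $L_H(v)$ inside $S_{3m-1,|W|}$ (a clique on $W$ joined to an independent set), with $|W|=m-k+3s$. By Lemma~\ref{lem:blocks}(iii), the spectral radius of $S_{3m-1,p}$ is the positive root of $\lambda^2-(p-1)\lambda-p(3m-1-p)=0$. At $p=m-1$ this root is exactly $2m-2$; one checks $(2m-2)m-2m(m-1)=0$, which is the barrier $T_1(A)$. So I need $p\le m-1$. Since $|W|$ exceeds $m-1$ by up to $2k-2$, the work is to shrink the set that all link edges must meet.

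\textbf{Step 1: one special vertex per matching edge.} For each $e\in M$, I claim at most one vertex of $e$ can serve as the $V(M)$-endpoint of link pairs reaching $U$ for many $v\in U$. Suppose two distinct vertices $x,x'\in e$ have this property. Then, using $|U|\ge 2m-2k+3\ge 58k$, I pick distinct $v,v',u,u'\in U$ with $\{v,x,u\},\{v',x',u'\}\in E(H)$. Replacing $e$ by these two edges enlarges $M$, a contradiction. I make ``many'' precise by the threshold at which such disjoint choices exist, which is a bounded multiple of $k$ in $U$. Let $X\subseteq V(M)$ be the set of special vertices, one per edge, so $|X|\le s$ and $|C\cup X|\le m-k+s\le m-1$.

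\textbf{Step 2: choose $v\in U$ with no exceptional link edges.} I want $v\in U$ whose link avoids $(V(M)\setminus X)\times U$ and also avoids the at most $\binom{3s}{2}$ pairs inside $V(M)\setminus X$. Given such $v$, every edge of $L_H(v)$ meets $C\cup X$. Therefore $L_H(v)\subseteq S_{3m-1,p}$ with $p=|C\cup X|\le m-1$. By monotonicity of the root in $p$ (adding edges), $\rho(L_H(v))\le 2m-2$, contradicting $\sigma(H)>2m-2$.

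Bounding the $v$ that fail on the first kind of pair is done by the definition of nonspecial vertices: each nonspecial $x$ is ``used'' by only $O(k)$ vertices of $U$. This excludes $O(k^2)$ vertices of $U$, well below $|U|$ since $m\ge30k$.

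\textbf{Main obstacle.} The pairs $xy$ inside $V(M)\setminus X$ are the difficult part. For these I would try a second swap. Suppose $\{v,x,y\}\in E(H)$ with $x\in e$, $y\in f$ nonspecial. Combining this edge with link edges at the remaining vertices of $e$ and $f$ that go into $U$ (which exist for most vertices by the degree lower bound $d_H\ge\sigma^2/2$) should give three disjoint edges replacing $e,f$. If this swap fails for some configuration, I would use the defect of $p$ below $m-1$ when $s\le k-2$ to absorb these $O(k^2)$ extra edges spectrally via Lemma~\ref{lem:basic}(iii). The case $s=k-1$ is the one where this slack is absent and the exact swap must be used.
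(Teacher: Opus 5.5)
Your proposal has a genuine gap. Step~1 is false as stated, and the pairs inside $V(M)\setminus X$ are never handled.

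\textbf{Step~1.} This step uses only the maximality of $M$, and maximality does not turn ``many $v\in U$ use $x$'' into two disjoint edges. Fix $w\in U$ and suppose that the only edges through $x$ or $x'$ with two vertices in $U$ are $\{x,w,u\}$ and $\{x',w,u\}$ for $u\in U\setminus\{w\}$.
\begin{itemize}
\item Every $v\in U$ uses both $x$ and $x'$.
\item Every such edge contains $w$, so there are no $\{v,x,u\},\{v',x',u'\}\in E(H)$ with $v,u,v',u'\in U$ distinct.
\item $M$ stays maximum if these are the only further edges of $H-C$, because they pairwise meet and each meets $e$.
\end{itemize}

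Suppose you instead call $x$ special when $L_H(x)[U]$ has three disjoint edges. Then Step~1 becomes correct, but the count in Step~2 breaks. A nonspecial $x$ whose pairs into $U$ form a star centred at some $w\in U$ puts $xw$ into $L_H(v)$ for every $v\in U\setminus\{w\}$. So far more than $O(k^2)$ vertices of $U$ are spoiled. The star centres would have to join the covering set, and when $s=k-1$ there is no room. Already $|C\cup X|=m-1$ and $\rho(S_{3m-1,m-1})=2m-2$ exactly, so a single extra link edge destroys the contradiction.

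\textbf{The main obstacle.} For the pairs inside $V(M)\setminus X$ you only describe what you would try.
\begin{itemize}
\item The swap needs edges with two vertices in $U$ through the other vertices of $e$ and $f$. The degree bound does not provide them. About $\tfrac52m^2$ pairs meet $C\cup V(M)$, which is more than the roughly $2m^2$ edges guaranteed by $d_H\ge\sigma(H)^2/2$. So, as far as degree is concerned, a whole link can avoid $U\times U$.
\item The spectral fallback fails quantitatively. Lemma~\ref{lem:basic}(iii) only bounds the effect of $E$ extra link edges by $\sqrt{2E}$, which is of order $k$ when $E$ is of order $k^2$. By contrast, passing from $p=m-1$ to $p=m-2$ lowers $\rho(S_{3m-1,p})$ only to a value above $2m-4$, and the slack $k-1-s$ may equal $1$.
\end{itemize}

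\textbf{The missing idea.} You need to replace exact containment in a split graph by an edge count.
\begin{itemize}
\item Let $G$ be a graph on $3m-1$ vertices with $\rho(G)>2m-2$, and let $R$ have order $m-1-t$. Compare $G$ with $K_R\vee(G-R)$, whose Perron vector is constant on $R$ and no larger off $R$. The paper shows this gives $e(G-R)>\tfrac12t(3m+t-1)\ge mt$.
\item It then takes $R=C\cup S$, where $F=H-C$, $N=2m+k$ and $S=\{v:d_F(v)>3kN\}$, rather than a set read off from a maximum matching.
\item Greedy covering of $S$ gives $|S|\le k-1$ and $\nu(F-S)\le t:=k-1-|S|$.
\item Every $v\in V(F)\setminus S$ then has $d_{F-S}(v)>mt$.
\item The at most $3t$ vertices of a maximum matching of $F-S$ cover all its edges and have degree at most $3kN$, so $e(F-S)\le9ktN$. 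Summing the degree bound over $V(F)\setminus S$ then forces $m<30k$.
\end{itemize}
For $t=0$ this is exactly your containment observation. For $t>0$ the slack is spent on counting edges, so no exceptional pair ever has to be eliminated.
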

	
	\begin{proof}
		We first record a degree estimate. Let $G$ be a graph on $3m-1$ vertices with
		$\rho(G)>L:=2m-2$, let $R\subseteq V(G)$ have order $r$ with $1\le r\le m-1$, and
		put $t=m-1-r$; we claim
		\begin{equation}\label{eq:npdeg}
			e(G-R)>\tfrac12t(3m+t-1)\ \ge\ mt .
		\end{equation}
		Indeed, put $J=G-R$ and $Q=K_R\vee J$, so $\lambda:=\rho(Q)\ge\rho(G)>L$. The Perron
		vector of $Q$ is constant, say $a$, on $R$; writing $y_v$ for its other coordinates
		and $s$ for the sum of all coordinates, the eigenvalue equations give
		$(\lambda+1)a=s$ and $(\lambda+1)y_v\le s$, so $y_v\le a$, and
		\[
		(\lambda-r+1)a=\sum_{v\in V(J)}y_v,\qquad
		\lambda\sum_{v\in V(J)}y_v=r|V(J)|a+\sum_vd_J(v)y_v\le\bigl(r|V(J)|+2e(J)\bigr)a .
		\]
		Hence $\lambda(\lambda-r+1)\le r(3m-1-r)+2e(J)$, and since $r\le m-1$ the function
		$x(x-r+1)$ is increasing for $x\ge L$, which gives \eqref{eq:npdeg}.
		
		Now put $F=H-C$ and $N=|V(F)|=2m+k$, and suppose $\nu(F)\le k-1$. Let
		$S=\{v\in V(F):d_F(v)>3kN\}$ and $s=|S|$. If $s\ge k$, choose $k$ vertices of $S$ and
		cover them successively by disjoint edges: when the $j$th is treated, at most
		$3(j-1)+(k-j)\le3k-3$ vertices are forbidden and each lies in fewer than $N$ edges at
		the current vertex, so fewer than $3kN$ edges are excluded while the current vertex
		has degree more than $3kN$. This gives $k$ disjoint edges, so $s\le k-1$; the same
		comparison, started from a maximum matching of $F-S$, gives $\nu(F-S)\le k-s-1=:t$.
		
		For $v\in V(F)\setminus S$, apply \eqref{eq:npdeg} to $L_H(v)$ with the deleted set
		$C\cup S$, of order $m-k+s=m-1-t$; this gives $d_{F-S}(v)>mt$. If $t=0$ this
		contradicts $\nu(F-S)=0$. If $t>0$, the vertices of a maximum matching of $F-S$ form
		a vertex cover of at most $3t$ vertices, each of degree at most $3kN$, so
		$e(F-S)\le9ktN$, while summing $d_{F-S}(v)>mt$ over the $N-s$ vertices outside $S$
		and dividing by three gives $e(F-S)>(N-s)mt/3$. Cancelling $t$ leaves
		$m<27kN/(N-s)<30k$, a contradiction.
	\end{proof}
	
	\begin{lem}\label{lem:ext1}
		There exist $\varepsilon_1>0$ and $m_0$ such that the following holds. Let $H$ be a
		$3$-graph on $3m$ vertices with $m\ge m_0$ and $\sigma(H)>2m-2$, and let
		$A_0\subseteq V(H)$ satisfy $|A_0|=m-1$ and
		$e\bigl(H[V(H)\setminus A_0]\bigr)\le\varepsilon_1m^3$. Then $H$ has a perfect
		matching.
	\end{lem}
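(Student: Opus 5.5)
We follow the standard extremal-case scheme for the barrier $T_1(A)$. Put $B_0=V(H)\setminus A_0$, so $|B_0|=2m+1$. The goal is a perfect matching in which every edge uses exactly one vertex of a slightly modified set $A$ of size $m$ and two vertices of its complement $B$; edges of type $ABB$ are the ones needed.

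\textbf{Step 1: cleaning.} Every vertex $v\in B_0$ has $\rho(L_H(v))>2m-2$. Only $O(\varepsilon_1m^2)$ of its link edges can lie inside $B_0$ for typical $v$, and by Lemma~\ref{lem:basic}(iii) removing them costs only $O(\sqrt{\varepsilon_1})m$ in spectral radius. Comparing with $\rho(K_{m-1}\vee I_{2m})$, which is about $2m-2$, and using Lemma~\ref{lem:missing} via Lemma~\ref{lem:blocks}(iii), we conclude that the link of a typical $v\in B_0$ is close to $K_{A_0}\vee I_{B_0}$. In particular it contains almost all pairs between $A_0$ and $B_0$.

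We call $v\in B_0$ \emph{bad} if $L_H(v)$ misses more than $\sqrt{\varepsilon_1}m^2$ pairs of $A_0\times B_0$. We call $a\in A_0$ \emph{bad} if it lies in fewer than $(1-\sqrt{\varepsilon_1})\binom{2m}{2}$ triples $aBB$. Edge counting bounds the number of bad vertices by $O(\varepsilon_1^{1/4}m)$.

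\textbf{Step 2: fixing sizes.} First cover all bad vertices by disjoint edges, chosen greedily. A bad vertex still has link spectral radius $>2m-2$, so its degree is at least about $2m^2$, and greedy covering works against $o(m)$ forbidden vertices. Each such edge is chosen with the right type balance: it removes one vertex from the current $A$-side and two from the $B$-side, or otherwise we record the imbalance.

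The essential parity issue is that $|A_0|=m-1$, one short. We need one extra edge inside $B_0$, that is, a $BBB$ edge, or equivalently two disjoint such edges traded appropriately, to absorb the surplus. Existence comes from Lemma~\ref{lem:matchdel} with $C=A_0$ and $k=1$: $H-A_0$ has an edge. More generally, if the bad-vertex covering creates a surplus $k$ on the $B$ side, Lemma~\ref{lem:matchdel} with $C$ a suitable $(m-k)$-set containing the remaining $A$-side supplies $k$ disjoint edges inside the $B$ side. This restores the ratio exactly, so the remaining sets $A',B'$ satisfy $|B'|=2|A'|$. These corrective edges may hit bad vertices; Lemma~\ref{lem:matchdel} gives no control over this, so we perform this step before the greedy covering of bad vertices and recompute.

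\textbf{Step 3: finishing.} In the remaining hypergraph every vertex is good. Split $B'$ arbitrarily into $B_1,B_2$ of size $|A'|=t$. Each $a\in A'$ misses $O(\sqrt{\varepsilon_1})m^2$ triples $aB_1B_2$. Each $b\in B'$ lies in almost all triples with one vertex in $A'$ and one in the other half, using the link structure from Step 1. Taking $\varepsilon_1$ small makes every vertex miss at most $t^2/16$ transversal triples of $(A',B_1,B_2)$, and Lemma~\ref{lem:box} produces a perfect matching.

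The main obstacle is Step 2: making the exact count $|A|=m-1$ versus $m$ work, while keeping the corrective $BBB$ edges from Lemma~\ref{lem:matchdel} compatible with the removal of bad vertices. We expect to need a careful order of operations and a recount of sizes.
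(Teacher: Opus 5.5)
\textbf{There is a genuine gap in Step~2, which you flag yourself.} Your Steps~1 and~3 match the paper: its Claim~1, via Lemma~\ref{lem:missing} applied to the split graph, and the final appeal to Lemma~\ref{lem:box}. Step~2 fails in two ways.

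\emph{A bad vertex need not admit a balance-preserving edge.} A vertex $a\in A_0$ may have link $K_{(A_0\setminus\{a\})\cup\{b^*\}}\vee I_{B_0\setminus\{b^*\}}$ plus one further edge. This link has spectral radius above $2m-2$, yet $a$ lies in only $2m+1$ triples $aBB$, all but one through $b^*$. Several such vertices sharing $b^*$ cannot all be covered by disjoint $ABB$ edges with themselves as the $A$-vertex. Covering them by $AAB$ edges raises $|B'|-2|A'|$ by $3$ each, so the number $k$ of corrective $BBB$ edges is known only after the covering.

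\emph{Lemma~\ref{lem:matchdel} then fits neither order of operations.} It needs $|C|=m-k$. Before the covering, $m-k<|A_0|$ once $k\ge2$, so $C$ cannot contain $A_0$ and the $k$ edges it returns need not be $BBB$ edges. After the covering, $C$ must contain the uncovered $A$-part, of size $m-|M|-k$ for the covering matching $M$. That leaves only $|M|$ free slots for the $3|M|$ vertices of $M$, so the new edges can collide with $M$. Nothing makes your proposed recomputation terminate.

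\textbf{The missing idea is to repartition rather than record the imbalance.} Take the exceptional vertices, those in many missing triples of $T^*(A_0)$ (types $A_0A_0B_0$ and $A_0B_0B_0$). Call one \emph{strong} if its link has at least $\beta m^2$ edges inside $B_0$, and \emph{weak} otherwise. Move the strong ones into $A$ and the weak ones into $B$, trimming $A$ to at most $m$ vertices, so that $|A|=m-k$ with $0\le k\le w+1$, where $w$ is the number of exceptional vertices.
\begin{itemize}
\item By the Claim~1 argument, a weak vertex $x$ has link close to $K_{A_0\setminus\{x\}}\vee I_{B_0\setminus\{x\}}$. In the new partition it lies in few missing $ABB$ triples and need not be covered at all; in the example above, $a$ is weak and simply becomes a $B$-vertex.
\item Each strong vertex keeps $\Omega(m^2)$ link edges inside $B$. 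It can be covered greedily by an $ABB$ edge in which it is the $A$-vertex, which is balance-neutral.
\item Now $k$ is fixed in advance, and Lemma~\ref{lem:matchdel} is applied with $C=A$ itself. Its $k$ edges lie in $B$ and so avoid every strong vertex, and the strong vertices are covered afterwards around them.
\end{itemize}

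\textbf{Two smaller points.}
\begin{itemize}
\item Trimming may move good vertices of $A_0$ into $B$, so badness must also count missing $A_0A_0B_0$ triples, as the paper's set $W$ does.
\item Your thresholds do not give $O(\varepsilon_1^{1/4}m)$ bad vertices. Typicality at scale $\varepsilon_1m^2$ fails for $\Theta(m)$ vertices. Claim~1 gives $O(\sqrt\beta\,m^2)$ missing pairs only for $b$ with at most $\beta m^2$ link edges inside $B_0$, and few vertices violate this only when $\beta\gg\varepsilon_1$. So the badness thresholds must be set on coarser, cascading scales.
\end{itemize}
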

	
	\begin{proof}
		Write $B_0=V(H)\setminus A_0$ and
		$T^*(A_0)=\{e\in\binom{V(H)}3:1\le|e\cap A_0|\le2\}$.
		
		\medskip\noindent\emph{Claim 1.} Let $m\ge4$, let $\beta>0$, and let $H$ be a $3$-graph on $3m$ vertices with
		$\sigma(H)>2m-2$ and $V(H)=A_0\sqcup B_0$, where $|A_0|=m-1$. Put
		$S_x=K_{A_0\setminus\{x\}}\vee I_{B_0\setminus\{x\}}$. If a vertex $x$ satisfies
		$e\bigl(L_H(x)[B_0\setminus\{x\}]\bigr)\le\beta m^2$, then
		$|E(S_x)\setminus E(L_H(x))|<300\sqrt\beta\,m^2$.
		
		Delete from $L_H(x)$ all edges inside $B_0\setminus\{x\}$ and call the result $K$, so
		that $K\subseteq S_x$; put $\lambda_0=\rho(S_x)$ and $D=\lambda_0-\rho(K)\ge0$. The
		complete part of $S_x$ has order $p\in\{m-2,m-1\}$ and its independent part has order
		$q=3m-1-p$, and the positive eigenvalue satisfies
		\begin{equation}\label{eq:sxeig}
			\lambda_0^2-(p-1)\lambda_0-pq=0 .
		\end{equation}
		For $p=m-1$ the positive root of \eqref{eq:sxeig} is $2m-2$; for $p=m-2$ the left
		side at $\lambda_0=m$ equals $-2m^2+6m+2<0$ for $m\ge4$, so the positive root is at
		least $m$. As the graph with $p=m-2$ is a spanning subgraph of the one with
		$p=m-1$ after relabeling, $m\le\lambda_0\le2m-2$, and Lemma~\ref{lem:basic}(iii) gives
		\[
		D<\lambda_0-(2m-2)+\sqrt{2\beta}\,m\le\sqrt{2\beta}\,m .
		\]
		Let $\mathbf u$ be the positive unit Perron vector of $S_x$, constant on each part.
		If its value on the independent part is $b$, the eigenvalue equation at an
		independent vertex gives $\lambda_0b/p$ on the complete part, which is at least $b$.
		Normalizing,
		\[
		\min_iu_i^2=\frac1{\lambda_0^2/p+q}\ \ge\ \frac1{8m},
		\qquad\text{since}\quad
		\frac{\lambda_0^2}p+q\le\frac{(2m-2)^2}{m-2}+2m+1=6m+1+\frac4{m-2}\le8m .
		\]
		Every eigenvalue of $S_x$ other than $\lambda_0$ is nonpositive: vectors of
		coordinate sum zero within the complete part give $-1$, those within the independent
		part give $0$, and the remaining two eigenvalues are the roots of \eqref{eq:sxeig},
		whose product $-pq$ is negative. Lemma~\ref{lem:missing} with $N=3m-1$ therefore
		gives
		\begin{align*}
		|E(S_x)\setminus E(K)|&\le8mD\Bigl(\frac{8(3m-1)}{\lambda_0}+2\Bigr)
		\le8mD(24+2)\\
		&=208mD<208\sqrt2\,\sqrt\beta\,m^2<300\sqrt\beta\,m^2 ,
		\end{align*}
		and $E(S_x)\setminus E(K)=E(S_x)\setminus E(L_H(x))$ because the deleted edges lie
		outside $S_x$.
		
		\medskip\noindent\emph{Step 1: few triples of $T^*(A_0)$ are missing.} Let $B_0=V(H)\setminus A_0$ and let $\zeta$ be the constant fixed below. Put
		$\nu=(\zeta/1800)^2$ and $\varepsilon_1=\zeta\nu/18$, and take $m_0$ large enough for Claim 1 with parameter $\nu$. For $b\in B_0$
		write $s_b=e(L_H(b)[B_0\setminus\{b\}])$; each edge of $H[B_0]$ contributes once to
		$s_b$ for each of its three vertices, so
		$\sum_{b\in B_0}s_b=3e(H[B_0])\le3\varepsilon_1m^3$ and at most
		$(3\varepsilon_1/\nu)m$ vertices have $s_b>\nu m^2$. For the others,
		Claim 1 shows that fewer than $300\sqrt\nu\,m^2$ edges of
		$K_{A_0}\vee I_{B_0\setminus\{b\}}$ are missing from $L_H(b)$, while for an
		exceptional vertex we use
		$e(K_{A_0}\vee I_{B_0\setminus\{b\}})=\binom{m-1}2+2m(m-1)<3m^2$. Summing over
		$b\in B_0$, a missing triple of type $A_0A_0B_0$ is counted once and one of type
		$A_0B_0B_0$ twice, and these are exactly the types in $T^*(A_0)$, so
		\[
		|T^*(A_0)\setminus E(H)|\ \le\ (2m+1)300\sqrt\nu\,m^2+\frac{3\varepsilon_1}{\nu}m\cdot3m^2
		\ \le\ \Bigl(900\sqrt\nu+\frac{9\varepsilon_1}{\nu}\Bigr)m^3\ =\ \zeta m^3 ,
		\]
		using $2m+1\le3m$. It remains to treat this situation.
		
		\medskip\noindent\emph{Step 2: repairing the partition.} Choose the constants in the order
		\begin{equation}\label{eq:extconst}
			\eta=\frac1{1024},\qquad 0<\beta\le\Bigl(\frac\eta{300}\Bigr)^2,\qquad
			0<\kappa\le\min\Bigl\{\frac\beta{100},\frac1{1000}\Bigr\},\qquad
			\zeta=\frac{\eta\kappa}3 ,
		\end{equation}
		and let $m_0$ be large enough that Claim 1 applies with parameter $\beta$ and $m_0\ge1/\kappa$.
		
		For $x\in V(H)$ let $d_{\mathrm{miss}}(x)$ be the number of triples of
		$T^*(A_0)\setminus E(H)$ containing $x$, and put
		$W=\{x:d_{\mathrm{miss}}(x)>\eta m^2\}$ and $w=|W|$. Each missing triple has three
		vertices, so $w\eta m^2\le\sum_xd_{\mathrm{miss}}(x)=3|T^*(A_0)\setminus E(H)|\le3\zeta m^3$
		and hence $w\le\kappa m$. Among the vertices of $W$ distinguish the \emph{strong}
		ones,
		\[
		P=\bigl\{x\in W:\ e\bigl(L_H(x)[B_0\setminus\{x\}]\bigr)\ge\beta m^2\bigr\},
		\]
		the others being \emph{weak}.
		
		Adjust the partition so that every strong vertex lies in the smaller part and every
		weak vertex in the larger. Put $A_1=(A_0\setminus W)\cup P$; if $|A_1|\le m$ take
		$A=A_1$, and otherwise delete $|A_1|-m$ vertices of $A_0\setminus W$ from $A_1$,
		which is possible because $|P|\le w<m$. Let $B=V(H)\setminus A$ and $|A|=m-k$. Then
		\begin{equation}\label{eq:adjust}
			P\subseteq A,\qquad 0\le k\le w+1\le2\kappa m,\qquad|A\,\triangle\,A_0|\le2w .
		\end{equation}
		For the last bound write $p_B=|P\cap B_0|$ and $a_W=|(W\setminus P)\cap A_0|$, so
		that $|A_1|=m-1-a_W+p_B$; without the deletion the symmetric difference has size
		$p_B+a_W\le w$, and with it the size is $p_B+a_W+(p_B-a_W-1)=2p_B-1\le2w$.
		
		We claim that for every $x\notin P$ the number of missing $ABB$ triples containing
		$x$, with respect to the new partition, is at most
		\begin{equation}\label{eq:missABB}
			(\eta+6\kappa)m^2 .
		\end{equation}
		If $x\in A\setminus P$ then $x\in A_0\setminus W$, and a candidate pair avoiding
		$A\,\triangle\,A_0$ lies in $B_0$, so the triple had old type $A_0B_0B_0$ and at
		most $d_{\mathrm{miss}}(x)\le\eta m^2$ of these are missing. If $x\in B\setminus W$,
		a candidate pair avoiding the symmetric difference has one vertex in $A_0$ and one
		in $B_0$, and the triple has old type $A_0B_0B_0$ or $A_0A_0B_0$; both lie in
		$T^*(A_0)$, so the same bound holds. If $x\in W\setminus P$ then $x\in B$ and
		$e(L_H(x)[B_0\setminus\{x\}])<\beta m^2$, so by Claim 1 fewer than
		$300\sqrt\beta\,m^2\le\eta m^2$ edges of $K_{A_0\setminus\{x\}}\vee I_{B_0\setminus\{x\}}$
		are absent from $L_H(x)$, and a candidate pair avoiding the symmetric difference is
		an edge of that split graph. Pairs meeting $A\,\triangle\,A_0$ number at most
		$3m|A\,\triangle\,A_0|\le6\kappa m^2$, which proves \eqref{eq:missABB}.
		
		For $p\in P$, at most $w$ vertices of $B_0$ have moved to $A$, and deleting the pairs
		meeting them removes at most $3wm$ link edges, so
		\begin{equation}\label{eq:strongdeg}
			e\bigl(L_H(p)[B]\bigr)\ \ge\ (\beta-3\kappa)m^2 .
		\end{equation}
		If $k>0$ then $30k\le60\kappa m\le m$, so Lemma~\ref{lem:matchdel} applied with the
		deleted set $A$ gives a matching $M_0$ of size $k$ in $H[B]$; if $k=0$ put
		$M_0=\emptyset$. Starting from $M_0$, cover the vertices of $P$ one at a time by
		disjoint $ABB$ edges, the $A$-vertex of the edge chosen for $p$ being $p$ itself. At
		each step at most $3k+2|P|\le8\kappa m$ vertices of $B$ have been used, so at most
		$3m\cdot8\kappa m=24\kappa m^2$ pairs of $L_H(p)[B]$ meet them, and by
		\eqref{eq:strongdeg} at least $(\beta-27\kappa)m^2>0$ choices remain.
		
		After deleting $M_0$ and the $|P|$ chosen edges, the remaining parts have sizes $t$
		and $2t$ with $t=m-k-|P|\ge(1-3\kappa)m\ge m/2\ge16$: indeed $M_0$ uses $3k$ vertices
		of $B$ and the other edges use $|P|$ of $A$ and $2|P|$ of $B$, and $|B|=2m+k$, so
		$B$ retains $2m+k-3k-2|P|=2t$ vertices. Split the remaining $B$ into two sets of
		size $t$. Every remaining vertex lies outside $P$, so \eqref{eq:missABB} bounds its
		number of missing triples transversal to the three parts, and by
		\eqref{eq:extconst}, $(\eta+6\kappa)m^2\le m^2/64\le t^2/16$. Lemma~\ref{lem:box}
		now gives a perfect matching on the remaining vertices.
	\end{proof}
	
	\begin{lem}\label{lem:ext2}
		There exist $\varepsilon_2>0$ and $m_2$ such that the following holds. Let $H$ be a
		$3$-graph on $3m$ vertices with $m\ge m_2$ and $\sigma(H)>2m-2$, and let
		$V(H)=A_0\sqcup B_0$ with $|A_0|=2m-1$. If
		$|\{e\in E(H):|e\cap A_0|\le1\}|\le\varepsilon_2m^3$, then $H$ has a perfect
		matching.
	\end{lem}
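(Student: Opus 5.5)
Throughout, $|A_0|=2m-1$ and $|B_0|=m+1$. The barrier $T_2(A_0)$ consists of the $AAA$ and $AAB$ triples. A perfect matching must use at least one edge with at most one vertex in $A_0$, because $m$ edges each taking at least two $A$-vertices would need $2m>|A_0|$ of them. I will mirror the proof of Lemma~\ref{lem:ext1}.

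\emph{First step: few $AAB$ triples are missing.} For $b\in B_0$ the link $L_H(b)$ has spectral radius $>2m-2$. Apart from $\varepsilon_2m^3$ edges summed over all vertices, its edges lie inside $A_0$, since an edge $\{b,x,y\}$ with $x$ or $y$ in $B_0$ meets $A_0$ at most once. By averaging, all but $O(\sqrt{\varepsilon_2})m$ vertices $b\in B_0$ have at most $\nu m^2$ link edges outside $K_{A_0}$. Lemma~\ref{lem:basic}(iii) then gives $\rho(L_H(b)[A_0])>2m-2-\sqrt{2\nu}\,m$, while $\rho(K_{2m-1})=2m-2$. Lemma~\ref{lem:missing}, applied with $T=K_{A_0}$ (Perron entries $1/(2m-1)$, all other eigenvalues $-1$), shows that $L_H(b)$ misses $O(\sqrt\nu)m^2$ pairs of $A_0$. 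Similarly, for $a\in A_0$ the link lies, up to few edges, in the split graph $K_{A_0\setminus\{a\}}\vee I_{B_0}$, which has $p=2m-2$ and $\lambda_0=2m-2+o(m)$. Lemma~\ref{lem:blocks}(iii) and Lemma~\ref{lem:missing} then bound the missing $AAA$ and $AAB$ triples at typical $a$. Altogether all but $\zeta m^3$ triples of $T_2(A_0)$ are present.

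\emph{Second step: repairing the partition.} As before, let $W$ be the set of vertices in more than $\eta m^2$ missing barrier triples, so $|W|\le\kappa m$. Call $x\in W$ \emph{strong} if its link has at least $\beta m^2$ edges with at most one end in $A_0$, and put the strong vertices into $B$ and the weak ones into $A$. For a weak vertex, the split-graph argument (Claim 1 adapted to $K_{A_0}\vee I_{B_0}$, or $K_{A_0}$ for $B$-vertices) shows that it still has almost all barrier pairs. Trim so that $|A|=2m-k'$. If $|A|\ge 2m$ (that is, $k'\le0$), I need some $ABB$ or $BBB$ edges. These come from the spectral condition: a link of a vertex of $B$ has radius $>2m-2$, and if its $A$-part has size at least $2m$ we can instead move vertices from $A$ to $B$ while keeping degrees. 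The cleanest choice is to force $|A|\le 2m-1$ by moving weak-but-typical vertices of $A$, so $|A|=2m-1-k$ with $k\ge0$.

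\emph{Third step: building the matching.} The set $B$ has $m+1+k$ vertices, and I need exactly $k+1$ edges using at least two $B$-vertices, disjoint and avoiding nothing else, after which the rest splits as $2t$ vertices in $A$ and $t$ in $B$. I find these $k+1$ edges by an analogue of Lemma~\ref{lem:matchdel}. In a link $G$ on $3m-1$ vertices with $\rho(G)>2m-2$, the graph $G-R$ with $R\subseteq A$ of size $2m-1-k-j$ must contain many edges not inside $A$. The argument is the same Perron-vector computation: with $R$ a clique joined to everything, $\lambda(\lambda-r+1)\le r(3m-1-r)+2e(G-R)$. This forces a $(k+1)$-matching of edges with at least two vertices outside $A$, once $k\le2\kappa m$. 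Next, each strong vertex $p\in B$ is covered by an $ABB$ edge (or each weak vertex in $A$ by an $AAB$ edge) using its many link edges, as in \eqref{eq:strongdeg}. Finally split the remaining $A$ into two halves of size $t$ and apply Lemma~\ref{lem:box}. The typical vertices miss at most $(\eta+O(\kappa))m^2\le t^2/16$ transversal triples.

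The main obstacle is the third step: the degree-type estimate must produce exactly $k+1$ disjoint edges having at most one vertex in $A$. The spectral hypothesis is barely above the barrier value, so equality analysis of $\lambda(\lambda-r+1)$ at $r=2m-1-k$ has to yield strict surplus. I would first verify the case $k=0$, where one needs a single such edge, directly from $\rho(L_H(v))>2m-2=\rho(K_{A\setminus\{v\}}\vee\ldots)$ for a vertex $v\in B$.
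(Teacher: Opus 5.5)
Your proposal has a genuine gap, starting in the first step. In the barrier $T_2(A_0)$ only the links of $B_0$-vertices sit at the threshold. The link of $a\in A_0$ is $K_{2m-2}\vee I_{m+1}$, whose spectral radius is about $(1+\sqrt3)m$, not $2m-2+o(m)$. So Lemma~\ref{lem:missing} gives nothing at $A$-vertices, and in fact $\Theta(m^3)$ triples of type $A_0A_0A_0$ can be absent. For example, take all $A_0A_0B_0$ triples, all $A_0A_0A_0$ triples with at least two vertices in a fixed $A_1\subseteq A_0$ of size $\lfloor3m/2\rfloor$, and the $m$ triples $\{a^*,b^*,b\}$ with $a^*\in A_0$, $b^*\in B_0$ fixed and $b\in B_0\setminus\{b^*\}$. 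Every link has spectral radius above $2m-2$ (those of $A_0\setminus A_1$ about $2.24m$), and only $m$ edges meet $A_0$ at most once. Yet every vertex of $A_0$ lies in $\Omega(m^2)$ missing $AAA$ triples, so your $W$ contains all of $A_0$.

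Your bound on missing $A_0A_0B_0$ triples through the $B_0$-links is fine. For $AAA$ triples, however, the only available information is that all but $20\varepsilon_2m$ vertices $a\in A_0$ have $p_a\ge m^2/10$. Here $p_a,q_a$ are the numbers of link edges inside $A_0\setminus\{a\}$ and inside $B_0$; the bound holds because $\rho(L_H(a))\le\sqrt{(2m-2)(m+1)}+\sqrt{2(p_a+q_a)}$ forces $p_a\ge m^2/10$ unless $q_a>m^2/20$, and $\sum_aq_a\le\varepsilon_2m^3$. So $W$ must be built from missing $AAB$ triples and this condition only. That suffices, since Lemma~\ref{lem:box} uses only $AAB$ transversal triples.

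Your placement of $W$ is also reversed. By the degree bound $d_H(x)>\binom{2m-1}2$ from Lemma~\ref{lem:basic}(ii), a vertex with few link edges meeting $B_0$ has link $K_{A_0}$ up to few pairs. That is the barrier link of a $B$-vertex, so weak vertices belong in $B$. Strong vertices belong in $A$, where their link edges meeting $B$ supply $AAB$ or $ABB$ edges to cover them. A weak vertex placed in $A$ misses almost all of the roughly $2m^2$ pairs of $A\times B$. It therefore fits neither Lemma~\ref{lem:box} nor, in general, a disjoint $AAB$ edge, and the claim that it ``still has almost all barrier pairs'' is false.

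The third step fails too. With $r=|R|\approx2m$, the inequality $\lambda(\lambda-r+1)\le r(3m-1-r)+2e(G-R)$ is vacuous. At $\lambda=2m-2$ its left side is about $2m(k+j)$, while $r(3m-1-r)\approx2m^2$; this reflects that $K_R\vee I_{3m-1-r}$ alone already has radius near $2.7m$. The usable input is the degree bound, which gives each $b\in B$ more than $\binom{2m-1}2-\binom{|A|}2$ edges not inside $A$. A high-degree/capacity argument, as in the paper's Claim, turns this into a matching of $ABB$ and $BBB$ edges.

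The bookkeeping must also use the weight $\omega(e)=|e\cap B|-1$, since a perfect matching needs total weight exactly $|B|-m$. A $BBB$ edge counts twice, and the extra $ABB$ edges you use to cover strong vertices add weight. When $|A|>2m$ one needs $AAA$ edges, not $ABB$ or $BBB$ edges as you write. Any surplus is removed by adding disjoint $AAA$ edges through vertices with $p_a\ge m^2/10$, which is where the $AAA$ information above enters.

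Finally, moving typical $A$-vertices into $B$ to force $|A|\le2m-1$ is not available. Such a vertex need not have most of $K_A$ in its link; in the example, a vertex of $A_0\setminus A_1$ misses about $0.875m^2$ pairs of $A_0$. As a $B$-vertex it would then violate the hypothesis of Lemma~\ref{lem:box}.
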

	
	\begin{proof}
		\medskip\noindent\emph{Claim.} Let $H$ be a $3$-graph on $3m$ vertices with $V(H)=A\sqcup B$, $|A|=2m-k$,
		$|B|=m+k$ and $1\le k\le m/600$. If $d_H(b)>\binom{2m-1}2$ for every $b\in B$, then
		there is a matching $M$ of $ABB$ and $BBB$ edges with $|M|\le k$ and
		$w(M)\in\{k,k+1\}$, where an $ABB$ edge has weight $1$, a $BBB$ edge has weight $2$,
		and $w(M)=\sum_{e\in M}w(e)$.
		
		Put $p=2m-k$ and $q=m+k$, and let $F$ be the spanning subhypergraph of $H$ whose
		edges are those of types $ABB$ and $BBB$. If $F$ has a matching of weight at least
		$k$, order its edges and retain the initial segment whose weight first reaches $k$;
		since every weight is $1$ or $2$ that segment has weight $k$ or $k+1$, and before
		its last edge its weight is at most $k-1$, so it has at most $k$ edges. It therefore
		suffices to produce a matching of weight at least $k$ in $F$. Suppose every matching
		in $F$ has weight at most $k-1$.
		
		Put $L=30km$ and
		\begin{gather*}
		S_A=\{a\in A:d_{ABB}(a)>L\},\qquad S_2=\{b\in B:d_{BBB}(b)>L\},\\
		S_1=\{b\in B\setminus S_2:d_{ABB}(b)>L\} ,
		\end{gather*}
		give capacity $1$ to the vertices of $S_A\cup S_1$ and $2$ to those of $S_2$, write
		$c(x)$ for the capacity, and set $a=|S_A|$, $s=|S_1|$, $c=|S_2|$,
		$S=S_A\cup S_1\cup S_2$ and $C=\sum_{x\in S}c(x)=a+s+2c$.
		
		We claim $C\le k-1$. Otherwise choose vertices of $S$ one at a time until their total
		capacity first reaches $k$; the resulting set has some size $\ell\le k$ and capacity
		$k$ or $k+1$. Process its vertices in any order, choosing through a vertex of
		$S_A\cup S_1$ an $ABB$ edge and through a vertex of $S_2$ a $BBB$ edge, avoiding all
		edges already chosen and all selected vertices not yet processed. When the $j$th
		vertex is processed at most $3(j-1)+(\ell-j)<4k$ vertices are forbidden, and a
		forbidden vertex lies with the current one in fewer than $3m$ edges, so fewer than
		$12km<L$ candidates are excluded while the current vertex has more than $L$ edges of
		the required type. The chosen edges form a matching in which each edge has weight
		equal to the capacity of its vertex, so its weight is at least $k$, a contradiction.
		
		For $e\in E(F)$ put $w'(e)=w(e)-\sum_{x\in e\cap S}c(x)$, let $R$ be the spanning
		subhypergraph of the edges with $w'(e)>0$, and put $t=k-1-C\ge0$; each edge of $R$
		has residual weight $1$ or $2$. Every matching in $R$ has residual weight at most
		$t$. Otherwise retain the initial segment $Q$ whose residual weight first reaches
		$t+1$, so that $w'(Q)\in\{t+1,t+2\}$ and $|Q|\le t+1=k-C$. For each vertex of
		$S\setminus V(Q)$ add an edge of the type prescribed by its capacity, successively
		avoiding $V(Q)$, the edges already added and the vertices of $S\setminus V(Q)$ not
		yet processed. With $r=|Q|$ and $h_0=|S\setminus V(Q)|\le C$ we have $r+h_0\le k$, so
		at the $j$th added edge at most $3r+3(j-1)+(h_0-j)\le3(r+h_0)-3<4k$ vertices are
		forbidden and the same estimate $12km<L$ applies. Each added edge contains its
		prescribed vertex of $S$ and no other, so the added edges contribute exactly
		$\sum_{x\in S\setminus V(Q)}c(x)$; since $w(Q)=w'(Q)+\sum_{x\in V(Q)\cap S}c(x)$, the
		enlarged matching has weight $w'(Q)+C\ge k$, a contradiction. As every residual
		weight is at least one, this also gives
		\begin{equation}\label{eq:nuR}
			\nu(R)\le t .
		\end{equation}
		
		Every edge of $F$ at a vertex of $S_A$ is an $ABB$ edge of weight $1$, and the
		capacity of a vertex of $S_2$ is $2$, at least the weight of any edge of $F$; so the
		vertices of $S_A\cup S_2$ are isolated in $R$. An edge of $R$ at a vertex of $S_1$ is
		a $BBB$ edge, of which there are at most $L$ there; a vertex of $A\setminus S_A$ lies
		in at most $L$ edges of $F$; and a vertex of $B\setminus(S_1\cup S_2)$ has both
		degrees at most $L$. Hence $\Delta_1(R)\le2L$, and since the vertices of a maximum
		matching of $R$ cover its edges,
		\begin{equation}\label{eq:eR}
			e(R)\le3\nu(R)\Delta_1(R)\le6Lt=180kmt .
		\end{equation}
		
		Let $b\in B\setminus(S_1\cup S_2)$. An edge of $H$ at $b$ outside $F$ has its other
		two vertices in $A$, so using $2m-1=p+k-1$ and
		$\binom{p+u}2-\binom p2=pu+\binom u2$ with $u=k-1$,
		\begin{equation}\label{eq:Pb}
			d_F(b)>\binom{2m-1}2-\binom p2=p(k-1)+\binom{k-1}2=:P .
		\end{equation}
		Write $h=s+c$. An $ABB$ edge at $b$ has nonpositive residual weight exactly when its
		$A$ vertex lies in $S_A$ or its other $B$ vertex lies in $S_1\cup S_2$, and splitting
		by whether the $A$ vertex lies in $S_A$ bounds these by $a(q-1)+(p-a)h$. A $BBB$ edge
		at $b$ has nonpositive residual weight exactly when its other two vertices include
		one of $S_2$, of which there are at most $c(q-1)$, or both lie in $S_1$, of which
		there are at most $\binom s2$. So at most
		$U=(a+c)(q-1)+(p-a)h+\binom s2$ edges at $b$ lie in $F\setminus R$, and with
		$t=k-1-a-s-2c$,
		\begin{equation}\label{eq:PU}
			P-U=pt+(a+c)(p-q+1)+ah+\binom{k-1}2-\binom s2\ \ge\ pt ,
		\end{equation}
		since $p-q+1=m-2k+1>0$, the quantities $a,c,h$ are nonnegative and
		$0\le s\le C\le k-1$. By \eqref{eq:Pb} and \eqref{eq:PU},
		\begin{equation}\label{eq:dRb}
			d_R(b)>pt\qquad\text{for every }b\in B\setminus(S_1\cup S_2),
		\end{equation}
		and there are at least $q-h\ge m+k-(k-1)=m+1$ such vertices.
		
		If $t=0$ then \eqref{eq:eR} says $R$ has no edges while \eqref{eq:dRb} produces a
		vertex of positive degree. If $t>0$ then, using $m$ of these vertices and
		$p=2m-k\ge m$,
		\[
		m^2t<\sum_{b\in B\setminus(S_1\cup S_2)}d_R(b)\le3e(R)\le540kmt ,
		\]
		so $m<540k$, contradicting $m\ge600k$.
		
		\medskip
		
		Put $\eta=1/1024$, $\kappa=\eta/1000$ and choose $\varepsilon_2>0$ with
		\begin{equation}\label{eq:eps2}
			\frac{12\sqrt3\sqrt{\varepsilon_2}}\eta+20\varepsilon_2\ \le\ \kappa ,
		\end{equation}
		for instance $\varepsilon_2=\min\{(\kappa\eta/(24\sqrt3))^2,\kappa/40\}$, and take
		$m_2=\lceil\max\{100,1/\kappa\}\rceil$. Write $L=2m-2$. By Lemma~\ref{lem:basic}(ii),
		$2d_H(x)\ge\rho(L_H(x))^2+\rho(L_H(x))>L(L+1)$, so
		\begin{equation}\label{eq:degx}
			d_H(x)>\binom{2m-1}2\qquad\text{for every }x\in V(H).
		\end{equation}
		
		For $b\in B_0$ let $c_b$ be the number of edges of $L_H(b)$ not contained in $A_0$,
		and put $J_b=L_H(b)[A_0]$ and $D_b=\sqrt{2c_b}$. Viewing $J_b$ as a graph on
		$V(H)\setminus\{b\}$, Lemma~\ref{lem:basic}(iii) gives $\rho(J_b)>L-D_b$. If $D_b<L$
		then Lemma~\ref{lem:basic}(ii) and the strict increase of $s^2+s$ for $s\ge0$ give
		\[
		\binom{2m-1}2-e(J_b)<\frac{L(L+1)-(L-D_b)(L-D_b+1)}2=\frac{(2L+1)D_b-D_b^2}2\le2mD_b ,
		\]
		and if $D_b\ge L$ the same bound follows from $\binom{2m-1}2\le2mD_b$. Let
		$\mathcal D$ be the family of missing triples with exactly two vertices in $A_0$;
		each has a unique vertex in $B_0$, so summing the last display over $b\in B_0$ gives
		$|\mathcal D|$ on the left. An edge counted in $c_b$ has at most one vertex in $A_0$
		and is counted at most three times, so $\sum_bc_b\le3\varepsilon_2m^3$, and by
		Cauchy--Schwarz
		\begin{equation}\label{eq:Dsize}
			|\mathcal D|\le2\sqrt2\,m\sum_{b\in B_0}\sqrt{c_b}
			\le2\sqrt2\,m\sqrt{(m+1)\textstyle\sum_bc_b}\le4\sqrt{3\varepsilon_2}\,m^3 .
		\end{equation}
		For $a\in A_0$ put $p_a=e(L_H(a)[A_0\setminus\{a\}])$ and $q_a=e(L_H(a)[B_0])$.
		Splitting the adjacency matrix of $L_H(a)$ into its bipartite part between
		$A_0\setminus\{a\}$ and $B_0$ and the rest, and bounding the operator norm of the
		second by its Frobenius norm,
		\[
		\rho(L_H(a))\ \le\ \sqrt{(2m-2)(m+1)}+\sqrt{2(p_a+q_a)}\ <\ \sqrt2\,m+\sqrt{2(p_a+q_a)} .
		\]
		If $p_a<m^2/10$ and $q_a\le m^2/20$ the right side is less than
		$(\sqrt2+\sqrt{3/10})m<1.97m\le2m-2$ for $m\ge100$, contradicting the hypothesis. So
		every $a$ with $p_a<m^2/10$ has $q_a>m^2/20$, and since $\sum_aq_a$ counts each
		$A_0B_0B_0$ edge once,
		\begin{equation}\label{eq:pa}
			|\{a\in A_0:p_a<m^2/10\}|\ \le\ 20\varepsilon_2m .
		\end{equation}
		Writing $d_{\mathcal D}(x)$ for the number of members of $\mathcal D$ containing $x$
		and setting
		$W=\{x:d_{\mathcal D}(x)>\eta m^2\}\cup\{a\in A_0:p_a<m^2/10\}$, the identity
		$\sum_xd_{\mathcal D}(x)=3|\mathcal D|$ with \eqref{eq:Dsize}, \eqref{eq:pa} and
		\eqref{eq:eps2} gives $w:=|W|\le\kappa m$.
		
		For $x\in W$ let $c_0(x)$ be the number of link edges of $L_H(x)$ with at least one
		end in $B_0$, put $S=\{x\in W:c_0(x)\ge\eta m^2\}$, and set
		$A=(A_0\setminus W)\cup S$, $B=V(H)\setminus A$, $|A|=2m-k$, $|B|=m+k$. All vertices
		that change parts lie in $W$, so $|A\,\triangle\,A_0|\le w$ and
		$k=1+|A_0\cap W|-|S|$, whence $|k|\le w+1\le2\kappa m$; note $k$ may be negative.
		
		Every $x\notin S$ lies in at most
		\begin{equation}\label{eq:missAAB}
			(\eta+3\kappa)m^2
		\end{equation}
		missing $AAB$ triples for the new partition. If $x\notin W$ it has not changed parts
		and $d_{\mathcal D}(x)\le\eta m^2$; a candidate pair avoiding $A\,\triangle\,A_0$
		gives an old $A_0A_0B_0$ triple, and pairs meeting the symmetric difference number at
		most $3mw\le3\kappa m^2$. If $x\in W\setminus S$ then $x\in B$ and $c_0(x)<\eta m^2$,
		so by \eqref{eq:degx}, $e(L_H(x)[A_0\setminus\{x\}])>\binom{2m-1}2-\eta m^2$; the
		number of available old $A_0A_0$ pairs is $\binom{2m-1}2$ if $x\in B_0$ and
		$\binom{2m-2}2$ if $x\in A_0$, so in either case fewer than $\eta m^2$ old $A_0A_0$
		pairs are missing from the link, and again at most $3mw$ changed pairs are added.
		
		For $x\in S\subseteq A$, at least $\eta m^2$ old link pairs meet $B_0$, and a pair
		ceases to meet the new $B$ only if it meets a vertex that changed parts, so at most
		$3mw$ are lost and $x$ lies in at least
		\begin{equation}\label{eq:strong2}
			(\eta-3\kappa)m^2
		\end{equation}
		edges meeting the new $B$; as $x\in A$ these have type $AAB$ or $ABB$.
		
		Give an edge the weight $\omega(e)=2-|e\cap A|=|e\cap B|-1$, so that $AAA$, $AAB$,
		$ABB$ and $BBB$ edges have weights $-1,0,1,2$, and put $\omega(M)=\sum_{e\in M}\omega(e)$.
		If $k>0$ then $1\le k\le2\kappa m<m/600$, and \eqref{eq:degx} allows
		the Claim to be applied to the new partition, giving a matching $M_0$ of
		$ABB$ and $BBB$ edges with $|M_0|\le k$ and $\omega(M_0)\in\{k,k+1\}$; if $k\le0$ put
		$M_0=\emptyset$. In both cases $|M_0|\le2\kappa m$ and $\omega(M_0)\ge k$. Starting
		from $M_0$, repeatedly take an uncovered $x\in S$ and cover it by an edge counted in
		\eqref{eq:strong2} avoiding the current matching; the matching always has at most
		$2\kappa m+|S|\le3\kappa m$ edges and covers at most $9\kappa m$ vertices, so at most
		$27\kappa m^2$ candidate pairs are blocked and at least $(\eta-30\kappa)m^2>0$ remain.
		The added edges have type $AAB$ or $ABB$, so the resulting matching $M_1$ satisfies
		$|M_1|\le3\kappa m$ and $\omega(M_1)\ge k$.
		
		Let $d=\omega(M_1)-k\ge0$; since every edge has weight at most $2$ and
		$|k|\le2\kappa m$, we have $d\le2|M_1|+|k|\le8\kappa m$. Add $d$ disjoint $AAA$ edges,
		each decreasing the weight by one. At any stage the current matching $M$ has
		$|M|\le11\kappa m$, so $A_0\setminus(W\cup V(M))$ has at least
		$(2m-1)-w-3|M|\ge(2-34\kappa)m-1\ge m$ vertices; choosing $a$ there, the definition of
		$W$ gives $p_a\ge m^2/10$, so $a$ lies in at least $m^2/10$ old $A_0A_0A_0$ edges,
		of which at most $2m(w+3|M|)\le68\kappa m^2<m^2/10$ meet $W\cup V(M)$ in another
		vertex. The final matching $M$ covers $S$ and has $|M|\le11\kappa m$, $\omega(M)=k$.
		
		Put $t=m-|M|$. Since $|e\cap B|=1+\omega(e)$,
		\[
		|B\setminus V(M)|=m+k-\sum_{e\in M}|e\cap B|=m+k-|M|-\omega(M)=m-|M|=t ,
		\]
		and there are $3t$ uncovered vertices in all, so $|A\setminus V(M)|=2t$; also
		$t\ge(1-11\kappa)m\ge m/2\ge16$. Partition $A\setminus V(M)$ into two sets of size
		$t$ and let $U=B\setminus V(M)$. Every remaining vertex lies outside $S$, so by
		\eqref{eq:missAAB} it lies in at most $(\eta+3\kappa)m^2\le m^2/64\le t^2/16$ missing
		transversal triples, and Lemma~\ref{lem:box} finishes the proof.
	\end{proof}

	\medskip\noindent\emph{The partite case.}
	Let $H$ be a $q$-balanced $3$-partite $3$-graph with
	classes $X,Y,Z$. For sets $A\subseteq X$ and
	$B\subseteq Y$, put $X_0=X\setminus A$ and
	$Y_0=Y\setminus B$.
	A legal triple has \emph{type} $ij$ with $i=\mathbf1[x\in A]$ and
	$j=\mathbf1[y\in B]$, it is \emph{crossing} if its type is $10$ or $01$, and
	$E_{ij}$ denotes the set of edges of that type. Write $m_{A,B}(v)$ for the number of
	missing crossing triples at $v$, and $t_{ij}(M)$ for the number of edges of type
	$ij$ in a matching $M$. A perfect matching must satisfy
	$t_{11}-t_{00}=|A|+|B|-q$, since each of its edges uses at most one vertex of $A$
	and at most one of $B$; this bookkeeping is what the argument has to arrange.
	
	\begin{lem}\label{lem:crossmiss}
		Let $0<\varepsilon\le\tfrac1{12}$ and $0<\delta\le\tfrac1{10}$, and suppose
		$\bigl||A|-q/2\bigr|\le\varepsilon q$, $\bigl||B|-q/2\bigr|\le\varepsilon q$,
		$|E_{00}|\le\varepsilon q^3$ and $\sigma(H)\ge(\tfrac1{\sqrt2}-\delta)q$. Then each
		crossing region has at most $(2\varepsilon+2\delta)q^3$ missing edges, and for every
		$\xi>0$ the set $\mathcal B=\{v:m_{A,B}(v)>\xi q^2\}$ has
		$|\mathcal B|\le12(\varepsilon+\delta)q/\xi$.
	\end{lem}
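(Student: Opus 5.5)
The plan is a double-counting argument using only the minimum vertex degree forced by the spectral hypothesis. For every vertex $v$ the link $L_H(v)$ is bipartite, so Lemma~\ref{lem:basic}(ii) gives
\[
d_H(v)=e(L_H(v))\ge\rho(L_H(v))^2\ge\Bigl(\tfrac1{\sqrt2}-\delta\Bigr)^2q^2\ge\Bigl(\tfrac12-\sqrt2\,\delta\Bigr)q^2\ge\Bigl(\tfrac12-2\delta\Bigr)q^2 .
\]

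\emph{The region of type $10$.} This region is $A\times Y_0\times Z$, with $|A||Y_0|q$ legal triples. The key observation is to count from a vertex $y\in Y_0$. Its $X$-neighbour lies in $A$ or in $X_0$, so every edge through $y$ has type $10$ or $00$. Summing over $Y_0$, and noting that each edge meets $Y$ exactly once, gives
\[
|E_{10}|+|E_{00}|\ \ge\ \sum_{y\in Y_0}d_H(y)\ \ge\ |Y_0|\Bigl(\tfrac12-2\delta\Bigr)q^2 .
\]
Using $|E_{00}|\le\varepsilon q^3$, the number of missing triples of type $10$ is
\[
|A||Y_0|q-|E_{10}|\ \le\ |Y_0|q\Bigl(|A|-\tfrac q2+2\delta q\Bigr)+\varepsilon q^3
\ \le\ \Bigl(\tfrac12+\varepsilon\Bigr)(\varepsilon+2\delta)q^3+\varepsilon q^3 .
\]
This uses $|A|\le(\tfrac12+\varepsilon)q$ and $|Y_0|\le(\tfrac12+\varepsilon)q$. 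Since $\varepsilon\le\tfrac1{12}$, the coefficient of $\varepsilon$ is $\tfrac32+\varepsilon<2$ and that of $\delta$ is $1+2\varepsilon<2$, which gives the bound $(2\varepsilon+2\delta)q^3$.

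\emph{The region of type $01$.} This region is $X_0\times B\times Z$ and is handled symmetrically. Every edge through a vertex $x\in X_0$ has type $01$ or $00$, so summing $d_H(x)$ over $X_0$ gives the same estimate with the roles of $A,B$ and $X,Y$ exchanged.

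\emph{The bound on $\mathcal B$.} Each missing crossing triple contains exactly three vertices, so
\[
\sum_v m_{A,B}(v)=3\cdot\#\{\text{missing crossing triples}\}\le3\cdot2(2\varepsilon+2\delta)q^3=12(\varepsilon+\delta)q^3 .
\]
Every vertex of $\mathcal B$ contributes more than $\xi q^2$ to this sum, so Markov's inequality gives $|\mathcal B|\le12(\varepsilon+\delta)q/\xi$.

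The only real choice, and the main point to get right, is which class to count from. Counting from $A$ fails, because vertices of $A$ also lie in type-$11$ edges, which are unconstrained. Counting from $Y_0$ (respectively $X_0$) works because those vertices can only see the crossing region and the small region $E_{00}$. Once this is fixed, the remaining steps are routine constant-checking, and the hypothesis $\delta\le\tfrac1{10}$ is needed only to keep the degree bound meaningful.
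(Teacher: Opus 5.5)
Your proof is correct and is essentially the paper's argument. Both take the degree bound $d_H(v)\ge(\tfrac12-2\delta)q^2$ from Lemma~\ref{lem:basic}(ii). Both then sum degrees over $Y_0$ (respectively $X_0$), where every edge has type $10$ or $00$ (respectively $01$ or $00$), to bound each crossing region by $(2\varepsilon+2\delta)q^3$, and double count missing crossing triples over their three vertices to bound $|\mathcal B|$.
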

	
	\begin{proof}
		By Lemma~\ref{lem:basic}(ii),
		\begin{equation}\label{eq:degcross}
			d_H(v)=e(L_H(v))\ge\rho(L_H(v))^2\ge\bigl(\tfrac12-\sqrt2\,\delta+\delta^2\bigr)q^2
			\ge\bigl(\tfrac12-2\delta\bigr)q^2 .
		\end{equation}
		Put $a=|A|$, $b=|B|$, and let $m_1,m_2$ count the missing edges in $A\times Y_0\times Z$
		and $X_0\times B\times Z$. Every edge meeting $Y_0$ has type $10$ or $00$ and is
		counted once in the sum of degrees over $Y_0$, so by \eqref{eq:degcross}
		\[
		m_1=a(q-b)q-\sum_{y\in Y_0}d_H(y)+|E_{00}|
		\le(q-b)\bigl(aq-(\tfrac12-2\delta)q^2\bigr)+\varepsilon q^3
		\le(2\varepsilon+2\delta)q^3 ,
		\]
		and summing degrees over $X_0$ gives the same bound for $m_2$. The two regions are
		disjoint and each missing crossing triple is counted at each of its three vertices,
		so $\sum_vm_{A,B}(v)=3(m_1+m_2)\le12(\varepsilon+\delta)q^3$; every vertex of
		$\mathcal B$ contributes more than $\xi q^2$.
	\end{proof}
	
	\begin{lem}\label{lem:complete}
		Suppose $q\ge48$ and $\tfrac{5q}{12}\le|A|,|B|\le\tfrac{7q}{12}$. Let $M$ be a
		matching of $H$ with $|M|\le q/12$ and $t_{11}(M)-t_{00}(M)=|A|+|B|-q$ such that
		every vertex outside $V(M)$ satisfies $m_{A,B}(v)\le q^2/144$. Then $M$ extends to a
		perfect matching of $H$.
	\end{lem}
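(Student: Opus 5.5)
The plan is to reduce everything to two applications of Lemma~\ref{lem:box}, one for each crossing region, after checking that the vertices left uncovered by $M$ split into two balanced boxes of the right shapes.

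\emph{Leftover sets.} Put $t=q-|M|$, the common number of uncovered vertices in each class. Let
\[
A'=A\setminus V(M),\quad X_0'=X_0\setminus V(M),\quad B'=B\setminus V(M),\quad Y_0'=Y_0\setminus V(M),\quad Z'=Z\setminus V(M).
\]
Writing $t_{ij}=t_{ij}(M)$, each edge of $M$ uses exactly one vertex of $X$ and one of $Y$, and its type records which sides these lie on. Hence
\[
|A'|=|A|-t_{10}-t_{11},\qquad |Y_0'|=q-|B|-t_{10}-t_{00}.
\]
Therefore
\[
|A'|-|Y_0'|=|A|+|B|-q-t_{11}+t_{00}=0
\]
by the hypothesis on $M$. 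Since $|X'|=|Y'|=t$, it also follows that $|X_0'|=t-|A'|=t-|Y_0'|=|B'|$.

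\emph{Splitting $Z'$.} Split $Z'$ arbitrarily into $Z_1$ of size $|A'|$ and $Z_2$ of size $|B'|$; this is possible because $|A'|+|B'|=|A'|+|X_0'|=t$. Consider two $3$-partite $3$-graphs:
\begin{itemize}
\item $F_1$, the restriction of $H$ to $A'\cup Y_0'\cup Z_1$, with three classes of common size $s_1=|A'|$;
\item $F_2$, the restriction to $X_0'\cup B'\cup Z_2$, with classes of common size $s_2=|B'|$.
\end{itemize}
Perfect matchings of $F_1$ and $F_2$ together with $M$ give a perfect matching of $H$.

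\emph{Box sizes.} Since $|M|\le q/12$ and $5q/12\le|A|,|B|\le 7q/12$, we get
\[
s_1\ge|A|-|M|\ge q/3 \qquad\text{and}\qquad s_2\ge|B|-|M|\ge q/3,
\]
because at most $|M|$ vertices of $A$ and of $B$ are covered. Thus $s_1,s_2\ge16$, using $q\ge48$.

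\emph{Missing triples.} Every transversal triple of $F_1$ lies in $A\times Y_0\times Z$, so it has type $10$. Every transversal triple of $F_2$ lies in $X_0\times B\times Z$, so it has type $01$. Hence a missing transversal triple of either box is a missing crossing triple of $H$. Every vertex of $F_1\cup F_2$ lies outside $V(M)$, so it lies in at most
\[
m_{A,B}(v)\le q^2/144=(q/3)^2/16\le s_i^2/16
\]
missing transversal triples of its box. Lemma~\ref{lem:box} then gives perfect matchings of $F_1$ and $F_2$, which completes the argument.

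\emph{Main obstacle.} The only delicate point is the size bookkeeping: that the leftover $A'$ matches $Y_0'$ exactly, and that the boxes remain of size at least $q/3$ so the threshold $q^2/144$ converts to $s^2/16$. The identity $t_{11}-t_{00}=|A|+|B|-q$ is precisely what makes the first hold, so no genuine difficulty is expected.
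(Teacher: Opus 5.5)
Your proposal is correct and follows the paper's proof essentially step for step. You use the type-count identity to balance the leftover sets, split $Z'$ into two crossing boxes of class sizes $|A'|,|B'|\ge q/3\ge16$, and apply Lemma~\ref{lem:box} to each using $q^2/144\le(q/3)^2/16$. The only cosmetic difference is that you check $|A'|=|Y_0'|$ directly, whereas the paper verifies $|A'|+|B'|=q-|M|$.
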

	
	\begin{proof}
		Delete $V(M)$ and put $q'=q-|M|$, so the residual classes $X',Y',Z'$ have size $q'$.
		With $A'=A\cap X'$ and $B'=B\cap Y'$, an edge of type $10$ or $11$ uses one vertex of
		$A$ and no other edge does, and similarly for $B$, so
		$|A'|=|A|-t_{10}(M)-t_{11}(M)$ and $|B'|=|B|-t_{01}(M)-t_{11}(M)$; since
		$|M|=t_{00}+t_{10}+t_{01}+t_{11}$, the hypothesis gives
		$|A'|+|B'|-q'=|A|+|B|-q+t_{00}(M)-t_{11}(M)=0$. Hence $|X'\setminus A'|=|B'|$ and
		$|Y'\setminus B'|=|A'|$, so the two crossing boxes
		$A'\times(Y'\setminus B')\times Z'$ and $(X'\setminus A')\times B'\times Z'$ can be
		completed to two $3$-partite $3$-graphs with classes of equal sizes $|A'|$ and
		$|B'|$, after splitting $Z'$ accordingly. Each has class size at least
		$\tfrac{5q}{12}-\tfrac q{12}=\tfrac q3\ge16$, and every vertex lies in at most
		$q^2/144\le(q/3)^2/16$ missing transversal triples, so Lemma~\ref{lem:box} supplies a
		perfect matching in each. Their vertex sets are disjoint and together with $V(M)$
		cover $V(H)$.
	\end{proof}
	
	For sets $Y,Z$ of size $q$ and $C\subseteq Y$, $C'\subseteq Z$ with $|C|=c\ge1$ and
	$|C'|=c'\ge0$, let $Q(c,c')$ be the bipartite graph on $Y\cup Z$ whose edges are the
	pairs $yz$ with $y\in C$ or $z\in C'$, and put $g(c,c')=\rho(Q(c,c'))^2$; thus
	$Q(c,c')$ is the graph $J_q(c,c')$ of Lemma~\ref{lem:blocks} and
	$g(c,c')=\rho(J_q(c,c'))^2$. If every edge of $H$ at $v\in V_1$ meets
	$C\subseteq V_2$ or $C'\subseteq V_3$, then $L_H(v)\subseteq Q(c,c')$.
	
	\begin{thm}\label{thm:defect}
		Let $q\ge200$ and let $H$ be a $q$-balanced $3$-partite $3$-graph with
		$\sigma(H)>\tau(q)$. Let $A_1\subseteq V_1$ and $A_2\subseteq V_2$ satisfy
		$|A_1|,|A_2|\ge0.4q$ and $|A_1|+|A_2|=q-1-s$, where $0\le s\le\tfrac q{200}-1$. Then
		$H-(A_1\cup A_2)$ has a matching of size $s+1$.
	\end{thm}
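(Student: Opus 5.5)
The plan is to follow the degree-counting scheme of Lemma~\ref{lem:matchdel}, with the split-graph estimate \eqref{eq:npdeg} replaced by a bipartite spectral estimate built on the templates $Q(c,c')$.

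Put $F=H-(A_1\cup A_2)$. Its classes are $V_1\setminus A_1$, $V_2\setminus A_2$ and $V_3$, of sizes about $q/2$, $q/2$ and $q$. Suppose $\nu(F)\le s$.

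\emph{Step 1: remove high-degree vertices.} Let $S$ be the set of vertices of $F$ with $d_F(v)>Kq(s+1)$, for a suitable absolute constant $K$ (say $K=10$). Any $s+1$ vertices of $S$ can be covered greedily by disjoint edges of $F$: at each step fewer than $4(s+1)$ vertices are forbidden, and each of them excludes at most $q$ edges at the current vertex. Hence $|S|\le s$. Starting the same greedy argument from a maximum matching of $F-S$ shows $\nu(F-S)\le s-|S|=:t$.

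\emph{Step 2: a linear degree lower bound.} Fix $v\in V(F)\setminus S$ and suppose, say, $v\in V_3$ (the cases $v\in V_1\setminus A_1$ and $v\in V_2\setminus A_2$ are symmetric). Every edge of $L_H(v)$ that is not an edge of $L_{F-S}(v)$ meets $A_1\cup(S\cap V_1)$ or $A_2\cup(S\cap V_2)$. So $L_H(v)\subseteq Q(c,c')\cup L_{F-S}(v)$ with $c+c'\le q-1-t$. I would prove two facts.
\begin{itemize}
\item[(a)] A \emph{gap estimate}: $\tau(q)^2-g(c,c')\ge\tfrac{q}{4}\,t$ whenever $c+c'\le q-1-t$ and both $c,c'\ge0.4q-s$. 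This comes from the explicit formula in Lemma~\ref{lem:blocks}(i). For $t=0$ it reads $g(c,c')\le\tau(q)^2$; the maximum of $g$ on the line $c+c'=q-1$ is attained at the barrier sizes, which is exactly how $\tau(q)$ was defined, and the odd and even values of $q$ are handled separately. For $t\ge1$ it follows from the fact that the derivative of $\tfrac12(T+\sqrt{T^2-4D})$ in $c$ or in $c'$ is at least about $q/2$ in this range.
\item[(b)] A \emph{perturbation estimate}: adding $e$ edges to $Q(c,c')$ increases $\rho^2$ by at most $Ce/q$, as long as $e=o(q^2)$.
\end{itemize}
For (b), I would take the Perron singular vectors of the enlarged biadjacency matrix. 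By Lemma~\ref{lem:basic}(iii) its spectral radius is close to $\rho(Q)\ge0.7q$, and it has minimum degree structure comparable to $Q$. Arguing as in Lemma~\ref{lem:basic}(iv), every coordinate is then $O(q^{-1/2})$, so each added edge contributes $O(1/q)$ to the Rayleigh quotient. Combining (a), (b) and $\rho(L_H(v))>\tau(q)$ gives $d_{F-S}(v)\ge c_0q^2t$ for an absolute $c_0>0$. When $t=0$ the conclusion is that $d_{F-S}(v)>0$ strictly, which follows because $\sigma(H)>\tau(q)$ is strict and $L_H(v)\subseteq Q(c,c')$ would give $\rho(L_H(v))^2\le g(c,c')\le\tau(q)^2$.

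\emph{Step 3: count.} If $t=0$, Step 2 produces an edge of $F-S$, contradicting $\nu(F-S)=0$. If $t>0$, the at most $3t$ vertices of a maximum matching of $F-S$ cover all its edges and each has degree at most $Kq(s+1)$, so $e(F-S)\le3tKq(s+1)$. On the other hand, summing the bound of Step 2 over the at least $q$ vertices of $V_3\setminus S$ gives $e(F-S)\ge c_0q^3t$. Together these give $s+1\ge c_0q/(3K)$, which contradicts $s\le q/200-1$ once the constants are tracked.

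\emph{Main obstacle.} I expect the gap estimate (a) to be the hardest step. It needs the exact maximization of $g$ on $c+c'=q-1$ including the integrality of $c,c'$, which is where the quartic for even $q$ enters, together with a uniform linear drop in $t$. I would first try to verify it by writing $c=\tfrac{q-1}2+x$, $c'=\tfrac{q-1}2-x-t$ and expanding the explicit formula from Lemma~\ref{lem:blocks}(i). The second delicate point is keeping the constant in (b) small enough that the final counting closes with $s<q/200$. If the constants in Step 3 do not close, I would fall back on enlarging $K$ and tightening (b) using the fact that $v\notin S$, so that the added link has at most $Kq(s+1)$ edges.
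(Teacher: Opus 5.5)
Your Step~2(a) is false because you run the count at the wrong vertices. For $v\in V_3\setminus S$ the template is $J_q(c,c')$ with $c=|A_1\cup S_1|\ge0.4q$ and $c'=|A_2\cup S_2|\ge0.4q$. Lemma~\ref{lem:blocks}(i) and monotonicity then give $g(c,c')$ of at least about $0.53q^2$, and about $\tfrac{3+\sqrt5}8q^2\approx0.65q^2$ at $c=c'=\tfrac{q-1}2$. But $\tau(q)^2<q^2/2$, so $g(c,c')\le\tau(q)^2$ fails for every admissible pair, already at $t=0$. The barrier shows that nothing can be extracted at $V_3$: with $A_i=C_i$ and $s=0$ the hypergraph $F$ is empty, yet every link at $V_3$ has spectral radius about $0.81q$. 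The value $\tau(q)^2$ equals $g(\tfrac{q-1}2,0)$ or $g(\tfrac q2-1,1)$. This is the squared radius of the link of a vertex of $V_1\setminus C_1$, whose template $Q(|C_2|,|C_3|)$ has $|C_2|+|C_3|\approx q/2$; it is not a maximum of $g$ on the line $c+c'=q-1$. So the vertices of $V_1\setminus A_1$ and $V_2\setminus A_2$ are not symmetric to those of $V_3$; they are the only useful ones. The right bookkeeping is $c_1+c_2+c_3=q-1-t$ with $c_i=|A_i\cup S_i|$ for $i=1,2$ and $c_3=|S\cap V_3|\le s$. Arrange $c_2\le c_1$. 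Then each of the at least $0.4q$ vertices $v\in V_1\setminus(A_1\cup S_1)$ has $L_H(v)\subseteq Q(c_2,c_3)\cup M_v$ with $e(M_v)=d_{F-S}(v)$ and $c_2\le\lfloor(q-1-t-c_3)/2\rfloor$. The gap must be computed for $g(c_2,c_3)$ with $c_3$ small.

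Even after this repair, a gap linear in $t$ fails in one configuration your plan does not anticipate: $q$ even, $t=1$, $c_3=0$, $c_1=c_2=\tfrac q2-1$. Then $g(c_2,0)=(\tfrac q2-1)q$ and $\tau(q)^2-g(c_2,0)$ lies strictly between $\tfrac12$ and $1$. Perturbation yields only $d_{F-S}(v)\ge1$, and counting gives nothing. This near-barrier case needs a structural argument, as follows. A sharper bound for $c'=0$, namely $(\lambda-cq)(\lambda-\rho(M)^2)\le c\sum_yd_M(y)^2$ for $\lambda=\rho(Q(c,0)\cup M)^2>\rho(M)^2$, forces $d_{F-S}(v)\ge3$. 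Since $\nu(F-S)\le1$, the graphs $M_v$ pairwise cross-intersect, so they are stars with a common centre $w$. If $w\in V_2$, the other vertices of $V_2\setminus(A_2\cup S_2)$ have no edge of $F-S$, a contradiction. If $w\in V_3$, then $L_H(v)\subseteq Q(\tfrac q2-1,1)$, so $\rho(L_H(v))^2\le\tau(q)^2$, contradicting strictness.

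Separately, your estimate (b) is off by a factor of $q$. Coordinates of order $q^{-1/2}$ bound the increase of $\rho$ by $O(e/q)$, hence that of $\rho^2$ by $O(e)$. The bound $O(e/q)$ for $\rho^2$ is false: a star with $e$ edges added to $Q(c,0)$ raises $\rho^2$ by about $e^2/q$. The degree bound is therefore of order $qt$, not $q^2t$. The final count then closes only with a tight threshold such as $3(s+1)q$, which gives $e(F-S)\le9t(s+1)q\le\tfrac9{200}tq^2$ against a lower bound of about $\tfrac1{10}tq^2$. Enlarging $K$, your fallback, moves the wrong way.
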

	
	\begin{proof}
		Put $a=\lfloor(q-1)/2\rfloor$. We use four properties
		of the family $Q(c,c')$, the first two of which refine
		Lemma~\ref{lem:blocks}.
		
		\medskip\noindent\textbf{Claim 1.} If $c^2\ge c'(q-c)$, then $g(c,c')\le cq+c'^2$; and
		if $q\ge6$ is even and $a=\tfrac q2-1$, then $\tau(q)^2=g(a,1)>aq+\tfrac12$.
		
		By Lemma~\ref{lem:blocks}(i), $g(c,c')$ is the larger root of
		$p(x)=x^2-(cq+c'q-cc')x+cc'(q-c)(q-c')$, and expanding gives
		$p(cq+\delta)=\delta^2+\delta(cq-c'(q-c))-cc'^2(q-c)$. For $X=cq+c'^2$ this is
		$c'^2(c'^2+c^2-c'(q-c))\ge0$, and $cq\ge c^2\ge c'(q-c)$ gives
		$2X\ge cq+c'(q-c)$, the sum of the roots, so $X$ is at least the larger root. For the
		second part, $p$ with $(c,c')=(a,1)$ is the polynomial defining $\tau(q)$, and with
		$q=2a+2$ one has $p(aq+\tfrac12)=\tfrac14-\tfrac12(3a+2)<0$.
		
		\medskip\noindent\textbf{Claim 2.} If $M$ is a graph with all edges in
		$(Y\setminus C)\times(Z\setminus C')$ and $G=Q(c,c')\cup M$, then
		$\rho(G)^2\le g(c,c')+\bigl(1+\tfrac{2c'q}{c^2}\bigr)e(M)$.
		
		In matrix products below, $M$ denotes its biadjacency matrix
		with rows indexed by $Y\setminus C$ and columns indexed by
		$Z\setminus C'$. Let $N$ and $N_0$ be the biadjacency matrices of $G$ and $Q(c,c')$, put
		$\lambda=\rho(G)^2$, and let $x\ge0$ be a unit vector with $N^{\top}Nx=\lambda x$.
		Write $X=\mathbf1^{\top}x$, let $x'$ be the restriction of $x$ to $Z\setminus C'$ and
		$p=\sum_{z\in C'}x_z$. For every $z$, $\lambda x_z=\sum_{y:N_{yz}=1}(Nx)_y$, and each
		$(Nx)_y$ is at most $X$ and equals $X$ for the $c$ all-ones rows, so
		$cX\le\lambda x_z\le qX$ and $\max_zx_z\le\tfrac qc\min_zx_z\le\sqrt q/c$. Since
		$(N_0x)_y=p$ and $(Nx)_y=p+(Mx')_y$ off $C$,
		$\lambda=|Nx|^2=|N_0x|^2+2p\,\mathbf1^{\top}Mx'+|Mx'|^2$, where
		$|N_0x|^2\le g(c,c')$, $|Mx'|^2\le e(M)$ by Lemma~\ref{lem:basic}(ii),
		$p\le c'\sqrt q/c$ and $\mathbf1^{\top}Mx'\le e(M)\sqrt q/c$.
		
		\medskip\noindent\textbf{Claim 3.} In the setting of Claim 2,
		if $c'=0$ and $G=Q(c,0)\cup M$ with
		$\lambda=\rho(G)^2>\mu:=\rho(M)^2$, then
		$(\lambda-cq)(\lambda-\mu)\le c\sum_yd_M(y)^2$.
		
		Here $N^{\top}N=cJ+W$ with $W=M^{\top}M$ positive semidefinite of largest eigenvalue
		$\mu$. If $x$ is a positive unit eigenvector for $\lambda$, then
		$(\lambda I-W)x=c(\mathbf1^{\top}x)\mathbf1$, so
		$1/c=\mathbf1^{\top}(\lambda I-W)^{-1}\mathbf1$; and
		$(\lambda I-W)^{-1}=\lambda^{-1}I+\lambda^{-1}W(\lambda I-W)^{-1}$ with
		$W(\lambda I-W)^{-1}\preceq(\lambda-\mu)^{-1}W$, whence
		$1/c\le q/\lambda+\mathbf1^{\top}W\mathbf1/(\lambda(\lambda-\mu))$ and
		$\mathbf1^{\top}W\mathbf1=\sum_yd_M(y)^2$.
		
		\medskip\noindent\textbf{Claim 4.} If $L_1,\dots,L_r$ with $r\ge2$ are bipartite
		graphs on a common bipartition, each with at least three edges, and every edge of
		$L_i$ meets every edge of $L_j$ for $i\ne j$, then the $L_i$ are stars with a common
		center.
		
		If $L_i$ had two disjoint edges $y_1z_1$, $y_2z_2$, every edge of $L_j$ would meet
		both and so be $y_1z_2$ or $y_2z_1$, leaving $L_j$ two edges. So $\nu(L_i)=1$ and
		$L_i$ is a star with a unique center $w_i$. If $w_1\ne w_2$, take leaves $u',u''$ of
		$L_2$ other than $w_1$; an edge $w_1u$ of $L_1$ meets $w_2u'$ and $w_2u''$, so
		$u\in\{w_2,u'\}\cap\{w_2,u''\}=\{w_2\}$ and $L_1$ has one edge.
		
		\medskip
		Let $B_i=V_i\setminus A_i$ and $F=H-(A_1\cup A_2)$, with classes $B_1,B_2,V_3$.
		Suppose $\nu(F)\le s$, put $\Theta=3(s+1)q$, and set
		$S=\{u\in V(F):d_F(u)>\Theta\}$, $S_i=S\cap V_i$, $F'=F-S$ and $t=s-|S|$. If $M_0$ is
		a matching of $F'$ and $U\subseteq S$ with $|M_0|+|U|\le s+1$, then $F$ has a matching
		of size $|M_0|+|U|$: cover the vertices $u\in U$ one at a time
		by edges containing $u$ and avoiding $V(M_0)$,
		$U\setminus\{u\}$ and all previously chosen edges,
		noting that at most $3(|M_0|+|U|-1)\le3s$
		vertices are to be avoided in the two classes not containing the current vertex and
		each lies in at most $q$ edges there, so fewer than $\Theta$ edges are excluded.
		Taking $M_0=\emptyset$ gives $|S|\le s$ and then $U=S$ gives $\nu(F')\le t$; hence
		\begin{equation}\label{eq:upperF}
			e(F')\le3t\Theta=9t(s+1)q .
		\end{equation}
		
		Put $A_i'=A_i\cup S_i$, $c_i=|A_i'|$ and $c_3=|S_3|$, so that
		\begin{equation}\label{eq:budget}
			c_1+c_2+c_3=q-1-t,\qquad 0.4q\le c_i\le0.6q\ (i=1,2),\qquad c_3\le s\le\tfrac q{200}.
		\end{equation}
		For $v\in V_1\setminus A_1'$, a pair $yz\in L_H(v)$ with $y\notin A_2'$ and
		$z\notin S_3$ comes from an edge of $F'$, so $L_H(v)\subseteq Q(c_2,c_3)\cup M_v$ with
		$e(M_v)=d_{F'}(v)$; by \eqref{eq:budget}, $2c_3q/c_2^2\le\tfrac1{16}$, so Claim 2
		gives
		\begin{equation}\label{eq:linkF}
			\tau(q)^2<\rho(L_H(v))^2\le g(c_2,c_3)+\tfrac{17}{16}d_{F'}(v),
		\end{equation}
		and symmetrically with $c_1$ in place of $c_2$. Also $c_i^2\ge0.16q^2\ge c_3(q-c_i)$,
		so $g(c_i,c_3)\le c_iq+c_3^2$ by Claim 1.
		
		Assume $c_2\le c_1$, put $k=t+c_3$ and $D=\tau(q)^2-g(c_2,c_3)$; by
		\eqref{eq:budget}, $c_2\le\lfloor(q-1-k)/2\rfloor$. We claim $D\ge0$, and
		$D\ge\tfrac13tq$ except when $q$ is even, $t=1$, $c_3=0$ and $c_1=c_2=a$. For odd $q$
		we have $\tau(q)^2=aq$ and $q-1=2a$, so $c_2\le a-\lceil k/2\rceil$ and
		$D\ge\tfrac12kq-c_3^2=\tfrac12tq+c_3(\tfrac q2-c_3)\ge\tfrac12tq$. For even $q$ we
		have $q-1=2a+1$ and $c_2\le a-\lfloor k/2\rfloor$. If $k\ge2$ then
		$\lfloor k/2\rfloor\ge k/3$ and $\tau(q)^2\ge aq$ give $D\ge\tfrac13tq$. If $k=0$
		then $t=0$ and $D\ge\tau(q)^2-aq>0$ by Claim 1. If $k=1$ and $c_3=1$ then $t=0$ and
		$D\ge\tau(q)^2-g(a,1)=0$ by Lemma~\ref{lem:blocks}(i) and Claim 1. If $k=1$ and
		$c_3=0$ then $t=1$ and $c_1+c_2=2a$; either $c_2\le a-1$ and $D\ge q$, or
		$c_1=c_2=a$.
		
		Suppose first $D\ge\tfrac13tq$. Since $|V_1\setminus A_1'|=q-c_1\ge0.4q$, if $t=0$
		then \eqref{eq:linkF} gives $d_{F'}(v)>0$ for all such $v$, contradicting
		$\nu(F')\le0$; and if $t\ge1$ then
		$d_{F'}(v)>\tfrac{16}{17}\cdot\tfrac13tq>\tfrac14tq$, and summing over
		$V_1\setminus A_1'$ gives $e(F')>\tfrac1{10}tq^2$, against \eqref{eq:upperF} and
		$s+1\le q/200$, which give $e(F')\le\tfrac9{200}tq^2$.
		
		There remains the case $q$ even, $t=1$, $c_3=0$, $c_1=c_2=a$, where the two sides are
		symmetric and $L_H(v)\subseteq Q(a,0)\cup M_v$ for every
		$v\in(V_1\setminus A_1')\cup(V_2\setminus A_2')$.
		For each such $v$, put $\lambda=\rho(Q(a,0)\cup M_v)^2$;
		spectral monotonicity gives $\lambda>\tau(q)^2$.
		First $d_{F'}(v)\ge3$: if $e(M_v)\le2$ then $\rho(M_v)^2\le2$ and $\sum_yd_{M_v}(y)^2\le4$, so Claim 3
		with $\lambda>aq$ gives $\lambda-aq\le4a/(aq-2)\le\tfrac12$, contradicting
		$\lambda>\tau(q)^2>aq+\tfrac12$. Since $\nu(F')\le1$, the graphs $M_v$ for
		$v\in V_1\setminus A_1'$ pairwise cross and there are at least two of them, so by
		Claim 4 they are stars with a common center $w$, and every edge of $F'$ contains $w$.
		If $w\in V_2$ the other vertices of $V_2\setminus A_2'$ have no $F'$-edge,
		contradicting $d_{F'}\ge3$; and if $w\in V_3$ then $L_H(v)\subseteq Q(a,1)$ for every
		$v\in V_1\setminus A_1'$, so $\rho(L_H(v))^2\le g(a,1)=\tau(q)^2$, again a
		contradiction.
	\end{proof}

	\begin{thm}\label{thm:extpart}
		Put $\varepsilon_0=10^{-11}$. There is $q_{\mathrm{ext}}$ such that the following
		holds for $q\ge q_{\mathrm{ext}}$. Let $H$ be $q$-balanced $3$-partite with
		$\sigma(H)>\tau(q)$, and suppose there are $A\subseteq X$ and $B\subseteq Y$ with
		$\bigl||A|-q/2\bigr|\le\varepsilon_0q$, $\bigl||B|-q/2\bigr|\le\varepsilon_0q$ and
		$e\bigl(H[X\setminus A,Y\setminus B,Z]\bigr)\le\varepsilon_0q^3$. Then $H$ has a
		perfect matching.
	\end{thm}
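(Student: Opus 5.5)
The plan follows the non-partite extremal lemmas: first clean the partition, then build a small matching $M$ that covers all exceptional vertices and satisfies the bookkeeping identity $t_{11}(M)-t_{00}(M)=|A|+|B|-q$, and finally finish with Lemma~\ref{lem:complete}. The first step is to apply Lemma~\ref{lem:crossmiss} with $\varepsilon=\varepsilon_0$ and $\delta$ tiny, which is allowed since $\tau(q)\ge(1/\sqrt2-o(1))q$. It bounds the missing crossing triples by $O(\varepsilon_0)q^3$. Taking $\xi=\sqrt{\varepsilon_0}$ (say), it also gives a set $\mathcal B$ of $O(\sqrt{\varepsilon_0})q$ vertices lying in many missing crossing triples. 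Next I would repair $A$ and $B$ by moving bad vertices across, in the style of Lemma~\ref{lem:ext1}, Step~2. A vertex $x\in X$ whose link is almost $B\times Z$ belongs in $X\setminus A$; one whose link is almost $(Y\setminus B)\times Z$ belongs in $A$. Vertices fitting neither pattern are \emph{strong}: their links carry $\Omega(q^2)$ pairs of both "wrong" kinds. In particular they lie in many type-$00$ or type-$11$ edges.

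To decide this dichotomy I would use Lemma~\ref{lem:missing}-type stability for the template $J_q(c,0)$, with Lemma~\ref{lem:blocks}(ii) supplying the deficit. After the repair, every non-strong vertex $v$ satisfies $m_{A,B}(v)\le(\eta+O(\kappa))q^2\le q^2/144$. The strong set has size at most $\kappa q$, and $|A|,|B|$ still lie within $2\kappa q$ of $q/2$.

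The matching is built in three stages. Set $k=|A|+|B|-q$.
\begin{itemize}
\item If $k<0$, the current barrier is too small and we need $|k|$ edges of type $00$. Equivalently, after deleting $A\cup B$ we need $|k|$ disjoint edges. Theorem~\ref{thm:defect} supplies exactly this: with $|A_1|+|A_2|=q-1-s$ it gives $s+1=|k|$ disjoint edges avoiding $A\cup B$, provided $|k|\le q/200$, which holds.
\item If $k>0$, we need $k$ edges of type $11$. I expect the greedy argument used for $S$ in Theorem~\ref{thm:defect} to work here. If fewer than $k$ disjoint $11$-edges existed, then few vertices could have large $11$-degree. The links of the remaining vertices of $A$ would then lie essentially in $Q(|Y\setminus B|,\cdot)$, with $|Y\setminus B|\le q/2-k/2$, so $\rho^2\le(q/2-k/2)q+o$, which is below $\tau(q)^2$. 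That gives a contradiction.
\item Once the $k$-correcting matching is fixed, cover each strong vertex greedily by an edge avoiding everything used so far. A strong vertex has $\Omega(q^2)$ link edges of some type, while only $O(\kappa q^2)$ are blocked.
\end{itemize}

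Covering strong vertices disturbs the balance $t_{11}-t_{00}$, so a final correction is needed. Each strong vertex is covered by an edge whose type can be chosen, because it has many link pairs of several kinds. Any residual imbalance $d$ is repaired by adding $d$ edges of type $00$ or $11$ through non-exceptional vertices, or by swapping crossing edges. This mirrors the final $AAA$-edge step of Lemma~\ref{lem:ext2}, using the degree lower bound $d_H(v)\ge(1/2-o(1))q^2$ to find the required types. Once $|M|\le q/12$, the identity holds, and all uncovered vertices satisfy $m_{A,B}\le q^2/144$, Lemma~\ref{lem:complete} extends $M$ to a perfect matching.

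The main obstacle is the exact $\pm k$ correction at the threshold $\tau(q)$, especially when $k<0$ is as small as $-1$ or $-2$. This is the regime where the parity of $q$ and the quartic value of $\tau(q)$ matter. Theorem~\ref{thm:defect} is designed for exactly this, so the delicate task is to check that its hypotheses survive the repair: the sizes must stay at least $0.4q$, and the matching it produces must avoid the strong vertices, or be chosen before covering them while keeping the count correct. I would first apply Theorem~\ref{thm:defect} to $H$ with the strong vertices kept in place. Any overlap with them would then be absorbed by assigning those strong vertices to the $00$-edges already found.
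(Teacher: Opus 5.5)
Your outline is the paper's: clean the partition, fix $t_{11}-t_{00}$ with a small matching (using Theorem~\ref{thm:defect} for the deficit), cover the exceptional vertices, and finish with Lemma~\ref{lem:complete}. The step that fails is the balancing after the exceptional vertices are covered. You propose to repair a residual imbalance by adding edges of type $00$ or $11$ ``using the degree lower bound''. Type-$00$ edges are exactly the scarce resource: the degree bound says nothing about them, and Theorem~\ref{thm:defect} supplies the $|k|$ you need with no slack. In graphs just above the barrier, all type-$00$ edges can pass through a single vertex of $Z$. So once covering an exceptional vertex forces a type-$11$ edge, the identity $t_{11}-t_{00}=|A|+|B|-q$ cannot be restored. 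The analogy with the $AAA$ step of Lemma~\ref{lem:ext2} breaks here, because there the correcting edges are abundant.

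The paper never creates an imbalance. Every exceptional vertex is covered by a crossing edge (type $10$ or $01$), which leaves $t_{11}-t_{00}$ unchanged. This needs $\Omega(q^2)$ crossing edges at every vertex after the adjustment. For $X$ and $Y$ your strong/non-strong repair essentially gives this: a strong vertex has many pairs of both kinds, and one kind is crossing. (The paper simply flips the vertices of crossing degree below $\gamma q^2$.) But you never treat $Z$, whose vertices cannot be moved and may lie in $\mathcal B$. For them a degree bound is useless, since a link inside the noncrossing pairs $A\times B\cup(X\setminus A)\times(Y\setminus B)$ can have about $q^2/2$ edges. The missing idea is spectral. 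That template is a disjoint union of two blocks of size about $\tfrac q2\times\tfrac q2$, with norm only slightly above $q/2$. So $\rho(L_H(z))\ge(\tfrac1{\sqrt2}-\delta)q$, together with the Frobenius bound on the crossing part, forces $d_{\mathrm{cr}}(z)>q^2/25$.

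Your sketch for $k>0$ is also wrong as written. The bound $\rho^2\le\tfrac12(q-k)q+o(q^2)$ is not below $\tau(q)^2$ for small $k$. For odd $q$ and $k=1$ the main term already equals $\tau(q)^2$, and an unquantified $o(q^2)$ error cannot beat a gap of order $kq$. Moreover, $|Y\setminus B|\le(q-k)/2$ holds only for the better of the two sides. This step is easy to repair in either of two ways:
\begin{itemize}
\item Apply Theorem~\ref{thm:defect} to $X\setminus A$ and $Y\setminus B$, each of size at least $0.4q$ with total $q-k$. It gives $k$ disjoint edges of $H[A,B,Z]$, all of type $11$.
\item As in the paper, split $L_H(z)$ against the crossing template $K_{|A|,q-|B|}\sqcup K_{q-|A|,|B|}$. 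This gives $d_{00}(z)+d_{11}(z)>q^2/25$, hence $|E_{11}|>q^3/50$ since $E_{00}$ is tiny, and a greedy choice suffices.
\end{itemize}
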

	
	\begin{proof}
		Fix $\delta=10^{-11}$, $\xi=10^{-3}$, $\gamma=10^{-4}$ and $\eta=10^{-6}$, and take
		$q_{\mathrm{ext}}\ge10^4$ large enough that $\tau(q)\ge(\tfrac1{\sqrt2}-\delta)q$,
		which is possible because $\tau(q)/q\to1/\sqrt2$. Then
		$\sigma(H)\ge(\tfrac1{\sqrt2}-\delta)q$ and \eqref{eq:degcross} holds, and
		Lemma~\ref{lem:crossmiss} gives $h:=|\mathcal B|\le12(\varepsilon_0+\delta)q/\xi<\eta q$.
		Write $d_{\mathrm{cr}}(v)$ for the number of crossing edges at $v$.
		
		\emph{Simultaneous adjustment.} Put
		$T_X=\{x\in X:d_{\mathrm{cr}}(x)<\gamma q^2\}$ and
		$T_Y=\{y\in Y:d_{\mathrm{cr}}(y)<\gamma q^2\}$, both computed from the original
		crossing degrees, and set $A^*=A\triangle T_X$ and $B^*=B\triangle T_Y$, the two
		changes being made simultaneously. A vertex of $X\cup Y$ lies in at least
		$(\tfrac12-\varepsilon_0)q^2$ legal crossing triples, so one outside $\mathcal B$ has
		more than $(\tfrac12-\varepsilon_0-\xi)q^2>\gamma q^2$ crossing edges; hence
		$T_X\cup T_Y\subseteq\mathcal B$ and $|T_X|+|T_Y|\le h$. Writing $a^*=|A^*|$ and
		$b^*=|B^*|$,
		\begin{equation}\label{eq:starsize}
			\bigl|a^*-\tfrac q2\bigr|,\ \bigl|b^*-\tfrac q2\bigr|\ \le\ \varepsilon_0q+h<2\eta q .
		\end{equation}
		Starred quantities refer to $A^*,B^*$. Every vertex has $d^*_{\mathrm{cr}}(v)\ge\tfrac\gamma2q^2$.
		Indeed, for $x\in X\setminus T_X$ the membership of $x$ is unchanged and flipping the
		vertices of $T_Y$ alters at most $|T_Y|q\le hq$ triples through $x$, so
		$d^*_{\mathrm{cr}}(x)\ge(\gamma-\eta)q^2$; for $x\in T_X$, flipping $x$ alone
		exchanges crossing and noncrossing edges at $x$, leaving $d_H(x)-d_{\mathrm{cr}}(x)$
		crossing edges, and the subsequent change of $B$ affects at most $hq$ triples, so
		$d^*_{\mathrm{cr}}(x)\ge(\tfrac12-2\delta-\gamma-\eta)q^2$; and the same two
		arguments apply in $Y$. For $z\in Z$, we use the biadjacency matrix of $L_H(z)$ on $X\times Y$. The noncrossing pairs for $A^*,B^*$ form
		$K_{a^*,b^*}\sqcup K_{q-a^*,q-b^*}$, whose matrix norm is
		$\max\{\sqrt{a^*b^*},\sqrt{(q-a^*)(q-b^*)}\}\le(\tfrac12+2\eta)q$ by
		\eqref{eq:starsize}; the noncrossing part of $L_H(z)$ has no larger norm by
		monotonicity, and its crossing part has Frobenius norm
		$\sqrt{d^*_{\mathrm{cr}}(z)}$, so
		$(\tfrac1{\sqrt2}-\delta)q\le(\tfrac12+2\eta)q+\sqrt{d^*_{\mathrm{cr}}(z)}$ and
		$d^*_{\mathrm{cr}}(z)>q^2/25$.
		
		\emph{The old good vertices stay good.} We do not recompute the exceptional set. A
		vertex $v\notin\mathcal B$ was not flipped, and the legal crossing triples through it
		change only when the vertex in the other of $X,Y$ was flipped, which affects at most
		$hq$ of them; for $z\in Z\setminus\mathcal B$ a crossing pair changes status only
		when one of its two ends was flipped, and there are at most $(|T_X|+|T_Y|)q\le hq$
		such pairs. Hence
		\begin{equation}\label{eq:stillgood}
			m_{A^*,B^*}(v)\le m_{A,B}(v)+hq\le(\xi+\eta)q^2<\frac{q^2}{144}
			\qquad(v\notin\mathcal B),
		\end{equation}
		and since every new type-$00$ triple that was not one before contains a vertex of
		$T_X\cup T_Y$, also $|E^*_{00}|\le|E_{00}|+hq^2<2\eta q^3$.
		
		\emph{Correcting the type count.} Put $d=q-a^*-b^*$, so that by \eqref{eq:starsize}
		$|d|<q/4000$ and $\tfrac{5q}{12}\le a^*,b^*\le\tfrac{7q}{12}$. We construct a
		matching $M_0$ with $|M_0|=|d|$ and $t^*_{11}(M_0)-t^*_{00}(M_0)=-d$. If $d>0$,
		Theorem~\ref{thm:defect} applied to $A^*,B^*$ gives a matching of size at least $d$
		in $H[X\setminus A^*,Y\setminus B^*,Z]$, all of whose edges have new type $00$; take
		$d$ of them. If $d=0$, take $M_0=\emptyset$. If $d<0$, split $L_H(z)$ as above,
		noting that the complete crossing pair graph for $A^*,B^*$ is
		$K_{a^*,q-b^*}\sqcup K_{q-a^*,b^*}$, again of norm at most $(\tfrac12+2\eta)q$, to
		get $d^*_{00}(z)+d^*_{11}(z)>q^2/25$ for every $z$; summing over $Z$ and using the
		bound on $|E^*_{00}|$ gives $|E^*_{11}|>q^3/50$. Choose $-d$ disjoint edges of new
		type $11$ greedily: after $j<-d$ choices at most $3jq^2\le12\eta q^3<q^3/50$ edges
		meet a chosen vertex, so another is available.
		
		\emph{Covering the exceptional vertices.} Starting from $M_0$, repeatedly take an
		uncovered $v\in\mathcal B$ and cover it by a crossing edge disjoint from the current
		matching; at most $h$ steps are needed, and throughout the matching has size
		$s\le|d|+h\le5\eta q$. An edge through an uncovered $v$ can meet the matching only in
		the other two classes, each containing $s$ covered vertices, and a covered vertex
		lies with $v$ in at most $q$ legal triples, so at most
		$2sq\le10\eta q^2<\tfrac\gamma2q^2$ crossing edges at $v$ are forbidden; one is
		therefore available. The resulting matching $M$ covers $\mathcal B$, has
		$|M|\le5\eta q<q/12$, and consists of $M_0$ together with edges of type $10$ or $01$,
		so $t^*_{11}(M)-t^*_{00}(M)=-d=a^*+b^*-q$. Every vertex outside $V(M)$ lies outside
		$\mathcal B$ and hence satisfies \eqref{eq:stillgood}, so Lemma~\ref{lem:complete}
		applied to $A^*,B^*$ extends $M$ to a perfect matching.
	\end{proof}

	\begin{proof}[Proof of Theorem~\ref{thm:nonpartite}]
		Let $\varepsilon_1$ and $\varepsilon_2$ be the constants
		of Lemmas~\ref{lem:ext1} and~\ref{lem:ext2}, and put
		$\varepsilon_*=\min\{\varepsilon_1,\varepsilon_2\}$ and
		$\varepsilon=\varepsilon_*/54$.
		Let $n_{\mathrm{ne}}$ be the order bound supplied by
		Theorem~\ref{thm:nonextnp} for this $\varepsilon$.
		Choose an integer $m_0$ large enough for
		Lemmas~\ref{lem:ext1} and~\ref{lem:ext2} and such that
		$3m_0\ge n_{\mathrm{ne}}$ and $m_0\ge9/\varepsilon_*$,
		and set $n_0=3m_0$.
		
		Let $H$ be a $3$-graph on $n=3m\ge n_0$ vertices with $\sigma(H)>2m-2$. If
		$|E(H)\setminus T_i(A)|\ge\varepsilon n^3$ for each $i\in\{1,2\}$ and every
		$A\subseteq V(H)$ of order $im$, then $H$ is $\varepsilon$-nonextremal and
		Theorem~\ref{thm:nonextnp} gives a perfect matching. Otherwise there are $i$ and $A$
		with $|A|=im$ and $|E(H)\setminus T_i(A)|<\varepsilon n^3$. Choose $a\in A$ and put
		$A_0=A\setminus\{a\}$; a triple in $T_i(A)\setminus T_i(A_0)$ contains $a$, so
		\[
		|E(H)\setminus T_i(A_0)|\ \le\ |E(H)\setminus T_i(A)|+\binom{n-1}2
		\ <\ 27\varepsilon m^3+\tfrac92m^2
		\ \le\ \tfrac{\varepsilon_*}2m^3+\tfrac{\varepsilon_*}2m^3=\varepsilon_*m^3 ,
		\]
		the last step using $m\ge9/\varepsilon_*$. Since $|A_0|=im-1$,
		Lemma~\ref{lem:ext1} applies when $i=1$ and Lemma~\ref{lem:ext2} when $i=2$, and in
		either case $H$ has a perfect matching.
		
		For sharpness, let $|V|=3m$ with $m\ge2$ and take $A\subseteq V$ with $|A|=m-1$ and
		$E(H)=T_1(A)$; every edge uses a vertex of $A$, so no matching has $m$ edges. For
		$b\notin A$ the link is $K_{m-1}\vee I_{2m}$, whose positive eigenvalue is the root
		of $\lambda^2-(m-2)\lambda-2m(m-1)$ by Lemma~\ref{lem:blocks}(iii); substituting
		$\lambda=2m-2$ gives zero, and the constant term is negative, so this is the
		positive root. For $a\in A$ the link is $K_{3m-1}$, of spectral radius $3m-2$. Hence
		$\sigma(H)=2m-2$. The second barrier attains the threshold as well: with
		$|A|=2m-1$ and $E(H)=T_2(A)$, a matching of $m$ edges would need $2m$ vertices of
		$A$; the link of $b\notin A$ is $K_{2m-1}$ together with isolated vertices, of
		spectral radius $2m-2$, and the link of $a\in A$ is $K_{2m-2}\vee I_{m+1}$, whose
		defining polynomial $\lambda^2-(2m-3)\lambda-(2m-2)(m+1)$ takes the value
		$-m(2m-2)<0$ at $\lambda=2m-2$ and so has its unique positive root above $2m-2$.
	\end{proof}
	
	\begin{proof}[Proof of Theorem~\ref{thm:partite}]
		Fix $\varepsilon_0=10^{-11}$, and let $\delta$ and $q_1$
		be given by Theorem~\ref{thm:nonextpart} for this
		$\varepsilon_0$.
		Increasing $q_1$ if necessary, we may assume that
		$\tau(q)\ge(1/\sqrt2-\delta)q$ for every $q\ge q_1$,
		since $\tau(q)/q\to1/\sqrt2$. Let $q_0=\max\{q_1,q_{\mathrm{ext}}\}$ and let $H$ be
		$q$-balanced $3$-partite with $q\ge q_0$ and $\sigma(H)>\tau(q)$. If $H$ is
		$\varepsilon_0$-nonextremal, Theorem~\ref{thm:nonextpart} gives a perfect matching.
		Otherwise there are distinct classes and sets $A,B$ in them of size within
		$\varepsilon_0q$ of $q/2$ such that at most $\varepsilon_0q^3$ edges avoid
		$A\cup B$, and Theorem~\ref{thm:extpart} applies after relabelling. The
		constructions of Section~\ref{sec:intro} show that the threshold is best possible.
	\end{proof}
	
	\section{Concluding remarks}\label{sec:conc}
	
	Both theorems are proved for large order, and the conjectures of~\cite{LLYZ}
	and~\cite{LY} remain open for the remaining small cases. We have not tried to
	optimize the constants, and the order thresholds our argument produces are very
	large; in the partite extremal analysis, we fix a positive
	spectral tolerance $\delta$ and then take $q$ large enough
	that $1/\sqrt2-\tau(q)/q\le\delta$.
	It would be interesting to know whether the exact thresholds hold for every $n$ and
	$q$ above a small explicit bound.
	
	The conjecture of Lin, Lu, Yuan and Zhao is more general than
	Theorem~\ref{thm:nonpartite}: it predicts, for each $k$, the least $\sigma$ forcing
	a matching of size $k$, the perfect matching endpoint being the case $3k=n$. Our
	argument uses that endpoint in an essential way, through the balance conditions used in the extremal
	matching constructions, and
	we do not see how to run it for smaller $k$. It would also be natural to seek a matching-number
	analogue of the partite threshold.
	
	Finally, the comparison between the two settings seems worth pursuing. The partite
	barrier is a single family with two barrier sets of sizes close to $q/2$, and this rigidity
	is what allows the whole stability analysis to be carried by one inequality for
	bipartite graphs, Lemma~\ref{lem:bipineq}. The corresponding non-partite statement
	has to allow the two families $T_1(A)$ and $T_2(A)$ simultaneously, and its proof is
	correspondingly longer. It would be interesting to investigate whether a similar
	contrast holds between $k$-partite $k$-graphs and general
	$k$-graphs for $k\ge4$.
	
	\section*{Declaration of generative AI and AI-assisted technologies in the manuscript preparation process}
	
	During the preparation of this work, the authors used Claude (Anthropic) for language refinement, technical editing, and computational verification of examples. The authors reviewed and edited the output as needed and take full responsibility for the content of the published article.


\begin{thebibliography}{99}
		
		\bibitem{BH}
		A. E. Brouwer and W. H. Haemers, Spectra of graphs, Springer, New York, 2012.
		
		\bibitem{FR}
		P. Frankl and V. R\"odl, Near perfect coverings in graphs and hypergraphs,
		European J. Combin. 6 (1985) 317--326.
		
		\bibitem{GR}
		C. Godsil and G. Royle, Algebraic graph theory, Springer, New York, 2001.
		
		\bibitem{LLYZ}
		H. Lin, H. Lu, F. Yuan and X. Zhao, A local spectral condition for perfect
		matchings in $3$-graphs, preprint, arXiv:2604.13726, 2026.
		
		\bibitem{LY}
		H. Lu and F. Yuan, A spectral condition for perfect matchings in $3$-partite
		$3$-graphs, preprint, arXiv:2606.15771, 2026.
		
		\bibitem{Stanley}
		R. P. Stanley, A bound on the spectral radius of graphs with $e$ edges,
		Linear Algebra Appl. 87 (1987) 267--269.
		
		\bibitem{West}
		D. B. West, Introduction to graph theory, 2nd edition, Prentice Hall, Upper
		Saddle River, 2001.
		
	\end{thebibliography}
\end{document}